\DeclareFontFamily{U}{matha}{\hyphenchar\font45}
\DeclareFontShape{U}{matha}{m}{n}{
      <5> <6> <7> <8> <9> <10> gen * matha
      <10.95> matha10 <12> <14.4> <17.28> <20.74> <24.88> matha12
      }{}
\DeclareSymbolFont{matha}{U}{matha}{m}{n}
\DeclareMathSymbol{\precneq}{3}{matha}{"AC}
\newtheorem{theorem}{Theorem}
\newtheorem*{theorem*}{Theorem}
\newtheorem{lemma}{Lemma}
\newtheorem{cor}{Corollary}
\providecommand{\customgenericname}{}
\newcommand{\newcustomtheorem}[2]{%
  \newenvironment{#1}[1]
  {%
   \renewcommand\customgenericname{#2}%
   \renewcommand\theinnercustomgeneric{##1}%
   \innercustomgeneric
  }
  {\endinnercustomgeneric}
}
\theoremstyle{definition}
\newtheorem{defn}{Definition}
\theoremstyle{remark}
\newcommand{\defref}[1]{\hyperref[#1]{Definition~\ref*{#1}}}
\newcommand{\Defref}[1]{\hyperref[#1]{Definition~\ref*{#1}}}
\newcommand{\lemref}[1]{\hyperref[#1]{Lemma~\ref*{#1}}}
\newcommand{\Lemref}[1]{\hyperref[#1]{Lemma~\ref*{#1}}}
\newcommand{\thmref}[1]{\hyperref[#1]{Theorem~\ref*{#1}}}
\newcommand{\Thmref}[1]{\hyperref[#1]{Theorem~\ref*{#1}}}
\newcommand{\coref}[1]{\hyperref[#1]{Corollary~\ref*{#1}}}
\newcommand{\Corref}[1]{\hyperref[#1]{Corollary~\ref*{#1}}}
\newcommand{\figref}[1]{\hyperref[#1]{Figure~\ref*{#1}}}
\newcommand{\algref}[1]{\hyperref[#1]{Algorithm~\ref*{#1}}}
\tikzset{
          solid node/.style={circle,draw,inner sep=1.2,fill=black},
          green node/.style={circle,draw,inner sep=1.2,fill=green, draw=green},
          red node/.style={circle,draw,inner sep=1.2,fill=red, draw=red},
          blue node/.style={circle,draw,inner sep=1.2,fill=blue, draw=blue},
          yellow node/.style={circle,draw,inner sep=1.2,fill=yellow, draw=yellow},
          cyan node/.style={circle,draw,inner sep=1.2,fill=cyan, draw=cyan},
          magenta node/.style={circle,draw,inner sep=1.2,fill=magenta, draw=magenta},
          hollow node/.style={circle,draw,inner sep=1.2},
          big node/.style={elipse, draw=green!60, fill=green!5},
          big solid node/.style={circle,draw,inner sep=1.2,fill=black, minimum size=0.3cm},
          big green node/.style={circle,draw,inner sep=1.2,fill=green, draw=green, minimum size=0.3cm},
          big red node/.style={circle,draw,inner sep=1.2,fill=red, draw=red, minimum size=0.3cm},
          big blue node/.style={circle,draw,inner sep=1.2,fill=blue, draw=blue, minimum size=0.3cm},
          big yellow node/.style={circle,draw,inner sep=1.2,fill=yellow, draw=yellow, minimum size=0.3cm},
          big cyan node/.style={circle,draw,inner sep=1.2,fill=cyan, draw=cyan, minimum size=0.3cm},
          big magenta node/.style={circle,draw,inner sep=1.2,fill=magenta, draw=magenta, minimum size=0.3cm},  
          square blue node/.style={rectangle,draw,inner sep=1.2,fill=blue, draw=blue, minimum size=0.3cm},
          square red node/.style={rectangle,draw,inner sep=1.2,fill=red, draw=red, minimum size=0.3cm},
          square yellow node/.style={rectangle,draw,inner sep=1.2,fill=yellow, draw=yellow, minimum size=0.3cm},
          diamond blue node/.style={diamond,draw,inner sep=1.2,fill=blue, draw=blue, minimum size=0.3cm},
          diamond red node/.style={diamond,draw,inner sep=1.2,fill=red, draw=red, minimum size=0.3cm},
          diamond yellow node/.style={diamond,draw,inner sep=1.2,fill=yellow, draw=yellow, minimum size=0.3cm},
          left label/.style={above left,midway},
          right label/.style={above right,midway}
        }
\title{Cocompact unfolding trees}
\author{Roman Gorazd\footnote{University of Newcastle, email:\href{mailto:roman.gorazd@gmail.com}{roman.gorazd@gmail.com}}}
\date{\monthyeardate\today}
\tikzstyle{dot}=[fill=black, draw=black, shape=circle]
\tikzstyle{red_dot}=[fill=red, draw=red, shape=circle]
\tikzstyle{green_dot}=[fill=green, draw=green, shape=circle]
\tikzstyle{yellow_dot}=[fill=yellow, draw=yellow, shape=circle]
\tikzstyle{blue_dot}=[fill=blue, draw=blue, shape=circle]
\tikzstyle{orange_dot}=[fill=orange, draw=orange, shape=circle]
\tikzstyle{arrow}=[->]
\begin{document}

 \maketitle

\begin{abstract}
    This paper will show when a rooted path tree of a finite directed rooted graph has only finitely many orbits under the action of it's undirected automorphism group (i.e. when it is cocompact). This will allow us to specify which trees are almost isomorphic to cocompact trees. We will provide an algorithm that will determine this, thus mostly answering question (1) from \cite{COURCELLE2025}.
\end{abstract}
\section{Introduction}
In the study of groups acing on trees the class of cocompact trees (i.e. a tree where the (undirected) automorphism group has finitely many orbits) has been of particular interest, when looking at actions of groups on trees. A key example of groups acting on such trees is the Burger-Mozes universal group \cite{Burger-Mozes}, that in the cocompact case describes action on finite degree regular trees (trees with all vertices having the same degree), as limits of finite permutation groups. This concept was generalised in \cite{smith2017} to the box product describing actions on biregular trees. This was in turn generalised in \cite{Reid2020} to describe all actions on cocompact trees satisfying property (P)\cite{Tits1970}. Cocompact trees also originate as Bass-Serre covering trees of finite graphs of groups\cite{bass1993covering}\cite{serre2002trees}.

On the other hand, path spaces of finite graphs (which we will refer to as unfolding trees) are of particular interest in the study of Thompson-like groups, with key examples of such groups ''almost acting'' on an unfolding tree, being the Thompson groups\cite{Cannon1996ThompsonGroups}(when the trees are binary), the Higman-Thompson groups\cite{higman74}(where the trees are quasi-regular) and full groups of shifts of finite type\cite{matui2012topological}\cite{lederle2020topological}. These trees can be seen as universal covers of finite rooted directed graphs. I intend to connect these two types of trees by showing which rooted directed graphs have cocompact unfolding trees.  
This will use the results of \cite{Reid2020} and allow us to answer the question when a tree is almost isomorphic to a cocompact tree . For this, we first note that any tree that is almost isomorphic to a cocompact tree is an unfolding tree of a graph, since a cocompact tree is an unfolding tree (as we will show) and any tree that is almost isomorphic to an unfolding tree is also an unfolding tree. Now, suppose we can characterise the graphs with cocompact unfolding trees. In that case, we can use the results from \cite{gorazd2023classification} to determine the graphs whose unfolding trees are isomorphic to cocompact trees. To classify the graphs that unfold into a cocompact tree, we introduce a type of vertex labelling we call \textbf{actual} and show that any graph that admits such a labelling has a cocompact unfolding tree. Conversely, any sinkless graph with cocompact unfolding tree admits an actual labelling.

The content of this paper relates to \cite{COURCELLE2025} and mostly answers the open question (1) mentioned in its conclusion, by providing an algorithm that decides whether a finite graph unfolds into a cocompact tree (referred to as strongly regular in the article).

In this paper, the graphs will be directed with multiple edges. Formally, graphs are $4$-tuples $G=(VG,EG,o_G,t_G)$, where $VG$ and $EG$ are the vertex and edge sets respectively and $o_G,t_G:EG\to VG$ are the origin and terminus functions (we will suppress the subscripts if it is clear what graph we are referring to). We will refer to vertices $x$ s.t. $o^{-1}(x)=\emptyset$ as \textbf{sinks} and vertices $y$ s.t. $t^{-1}(y)=\emptyset$ as \textbf{sources}.
We can define the set of paths in $G$ to be
    \[\mathcal{W}(G):=\{e_1e_2\dots e_n\in EG^*\mid \forall i<n\ e_{i-1}=e_i\}\cup \{\varepsilon_v\mid v\in VG\}, \] 
where $\varepsilon_v$ denotes the empty path based on $v$. We will denote $\mathcal{W}^{+}(G)$ to be the set of non-empty paths. For any path $p\in \mathcal{W}(G)$, we denote $|p|$ to be the length of it, additionally we set $O(p),\ T(p)$ to be the origin vertex and terminal vertex of $p$, respectively (with $O(\varepsilon_v)=T(\varepsilon_v)=v$ for any vertex $v$). We will write $\mathcal{W}(G,v)$ to be the set of all paths originating in $v$. For any graph $G$ and any vertex $v$, we will denote by $G_v$ the subgraph consisting of the vertices and edges that can be reached by a directed path starting at $v$ (and all edges that start and end at those vertices). A vertex $R$ s.t. $G_R=G$, is called a root. A rooted graph will be a pair $(G,R)$ where $G$ is a graph and $R$ one of it's roots.
We call a graph a (rooted) \textbf{tree} if it has a root $R$ and for each vertex $v$ there is a unique path from $R$ to $v$ (note that this makes the root unique).

The following definitions will aim to translate the concept of a undirected unrooted tree being cocompact into the directed rooted setting which allows us to apply the results of \cite{gorazd2023classification}. 

\begin{defn}
    For any rooted tree $\mathcal{T}$ with the unique root $R$ and any $v\in V\mathcal{T}$ we can define the tree $\mathcal{T}^v$ by keeping the vertices and edges the same: $V\mathcal{T}^v:=V\mathcal{T}$ and $E\mathcal{T}^v:=E\mathcal{T}$, but reversing the edge directions of the edges on the unique path $p_v$ from $R$ to $v$ by setting:
        \[\forall e\in E\mathcal{T}^v,\ o_{\mathcal{T}^v}(e):=\begin{cases}
                t_{\mathcal{T}}(e), & e\in p_v\\
                o_{\mathcal{T}}(e), & e\notin p_v
            \end{cases}\quad t_{\mathcal{T}^v}(e):=\begin{cases}
                o_{\mathcal{T}}(e), & e\in p_v\\
                t_{\mathcal{T}}(e), & e\notin p_v
            \end{cases}\]
\end{defn}
This makes $\mathcal{T}^v$ the tree with root $v$ that is undirected isomorphic to $\mathcal{T}$ as unrooted trees. Note that if a homomorphism from $\mathcal{T}$ to $\mathcal{T}'$ maps a vertex $v\in \mathcal{T}$ to $w\in \mathcal{T}'$ then it induces a homomorphism from $\mathcal{T}^v$ to $(\mathcal{T}')^{w}$.
\begin{lemma}\label{ch2-lem-1}
    For any two rooted trees $\mathcal{T},\mathcal{T}'$, any homomorphism 
        \[\phi:\mathcal{T}\to\mathcal{T}',\]
    and any $v\in V\mathcal{T}$, there is a homomorphism
        \[\phi^v:\mathcal{T}^v\to(\mathcal{T}')^{\phi(v)},\]
    that agrees with $\phi$ on both vertices and edges.
\end{lemma}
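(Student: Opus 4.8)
The plan is to take $\phi^v$ to be $\phi$ itself, regarded as a map on the shared data $V\mathcal{T}^v=V\mathcal{T}$ and $E\mathcal{T}^v=E\mathcal{T}$; all that must be checked is that this single map intertwines the re-oriented origin and terminus functions of $\mathcal{T}^v$ and $(\mathcal{T}')^{\phi(v)}$. Write $R'$ for the root of $\mathcal{T}'$ and $p'_{\phi(v)}$ for the unique path from $R'$ to $\phi(v)$ in $\mathcal{T}'$. By the definition of $\mathcal{T}^v$, the only edges whose orientation differs between $\mathcal{T}$ and $\mathcal{T}^v$ are those lying on the path $p_v$ from $R$ to $v$, and likewise the only edges re-oriented in passing from $\mathcal{T}'$ to $(\mathcal{T}')^{\phi(v)}$ are those lying on $p'_{\phi(v)}$. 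Thus the lemma reduces to one combinatorial assertion: for every edge $e$ of $\mathcal{T}$ one has $e\in p_v$ if and only if $\phi(e)\in p'_{\phi(v)}$.

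First I would record two elementary facts about a tree $\mathcal{S}$ with root $R_{\mathcal{S}}$: (i) $\mathcal{S}$ contains no directed closed walk of positive length, since concatenating the path from $R_{\mathcal{S}}$ to a vertex lying on such a walk with the walk itself would produce a second path from $R_{\mathcal{S}}$ to that vertex; hence every directed walk in $\mathcal{S}$ is simple and is the unique directed walk joining its endpoints; and (ii) every non-root vertex has exactly one incoming edge, namely the last edge of its path from $R_{\mathcal{S}}$. I would also observe that $\phi$ shifts depth by a fixed constant: if $\delta$ is the depth in $\mathcal{T}'$ of $\phi(R)$, then $\phi$ carries a depth-$d$ vertex of $\mathcal{T}$ to a depth-$(d+\delta)$ vertex of $\mathcal{T}'$, by induction along edges using fact (ii) in both trees.

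Next I would pin down how $\phi$ relates the two relevant paths. Applying $\phi$ to $p_v$ produces a directed walk from $\phi(R)$ to $\phi(v)$ in $\mathcal{T}'$; prepending the path $p'_{\phi(R)}$ from $R'$ to $\phi(R)$ gives a directed walk from $R'$ to $\phi(v)$, which by fact (i) must be $p'_{\phi(v)}$, so $p'_{\phi(v)}=p'_{\phi(R)}\cdot\phi(p_v)$. This immediately yields the forward implication: $e\in p_v$ gives $\phi(e)\in\phi(p_v)\subseteq p'_{\phi(v)}$. For the reverse implication, suppose $e\notin p_v$ but $\phi(e)\in p'_{\phi(v)}$; by the depth-shift remark $\phi(e)$ terminates at depth strictly greater than $\delta$, so it cannot lie on $p'_{\phi(R)}$, whence $\phi(e)=\phi(e_i)$ for some edge $e_i$ of $p_v$; comparing termini and invoking fact (ii) for $\mathcal{T}$ (the terminus of $e$ has a unique incoming edge) one is forced to conclude $e=e_i\in p_v$, a contradiction.

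With the equivalence established I would finish by an edge-by-edge check of the homomorphism condition. If $e\in p_v$ then also $\phi(e)\in p'_{\phi(v)}$, both edges get re-oriented, and the homomorphism identity for $\phi$ gives $o_{(\mathcal{T}')^{\phi(v)}}(\phi(e))=t_{\mathcal{T}'}(\phi(e))=\phi(t_{\mathcal{T}}(e))=\phi\bigl(o_{\mathcal{T}^v}(e)\bigr)$, and symmetrically for the terminus; if $e\notin p_v$ then neither edge is re-oriented and the same identity applies directly. Hence $\phi$, unchanged on vertices and edges, is a homomorphism $\mathcal{T}^v\to(\mathcal{T}')^{\phi(v)}$, as required. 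The step I expect to be the genuine obstacle is the reverse implication above — excluding an off-path edge of $\mathcal{T}$ whose $\phi$-image happens to land on the reversed path of $(\mathcal{T}')^{\phi(v)}$; once facts (i), (ii) and the constant depth shift are in place, the remaining verifications are routine bookkeeping.
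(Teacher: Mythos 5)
Your overall strategy is identical to the paper's: take $\phi^v=\phi$ on the shared vertex and edge sets and reduce everything to the single combinatorial claim that $e\in p_v$ if and only if $\phi(e)\in p'_{\phi(v)}$. You are right that this equivalence is the entire content of the lemma (the paper's proof asserts both directions without comment), and your proof of the forward direction, via the identity $p'_{\phi(v)}=p'_{\phi(R)}\cdot\phi(p_v)$ together with the fact that a directed walk in a tree is the unique path between its endpoints, is correct.

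The reverse implication is where your argument breaks, precisely at the step you yourself flagged as the obstacle. From $\phi(e)\in p'_{\phi(v)}$ and the depth count you correctly obtain $\phi(e)=\phi(e_i)$ for some edge $e_i$ of $p_v$, but ``comparing termini'' only yields $\phi(t_{\mathcal{T}}(e))=\phi(t_{\mathcal{T}}(e_i))$; to invoke fact (ii) you first need $t_{\mathcal{T}}(e)=t_{\mathcal{T}}(e_i)$, which requires $\phi$ to be injective on vertices, and no such hypothesis is available. The gap is not cosmetic: the reverse implication, and with it the lemma as literally stated, fails for non-injective homomorphisms. Take $\mathcal{T}$ to be a root $R$ with two children $a,b$ via edges $e_1,e_2$, let $\mathcal{T}'$ be a single edge $f$ from $R'$ to $c$, and let $\phi$ fold $a,b$ onto $c$ and $e_1,e_2$ onto $f$. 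With $v=a$ one has $e_2\notin p_a$ but $\phi(e_2)=f\in p'_{c}$, and indeed $\phi$ is not a homomorphism $\mathcal{T}^a\to(\mathcal{T}')^{c}$: the edge $e_2$ keeps origin $R$, which maps to $R'$, while $f$ acquires origin $c$ in $(\mathcal{T}')^{c}$. So the equivalence, and hence the lemma, only holds under an injectivity assumption on $\phi$ (in particular for isomorphisms, which is how it is used later in the paper); under that assumption $\phi(e)=\phi(e_i)$ gives $e=e_i$ directly and your argument closes. The paper's own proof asserts both implications without justification and is subject to the same caveat, so you have in fact located a real issue rather than merely failed to reproduce a routine step; the fix is to add the injectivity hypothesis, not to look for a cleverer argument.
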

\begin{proof}
    We will first note that $\phi$ takes the roots to each other.
    
    Now set $\phi^v(x)=\phi(x)$ for any $x\in E\mathcal{T}\cup V\mathcal{T}=E\mathcal{T}^v\cup V\mathcal{T}^v$. We will just have to show that this is in fact a homomorphism for this we take $p_v$ to be the path from the root to $v$, $p_{\phi(v)}$ to be the path from the root to $\phi(v)$ and some edge $e\in E\mathcal{T}$. Note that $\phi(p_v)=p_{\phi(v)}$.

    If $e$ is not in $p_v$, $\phi(e)=\phi^v(e)$ is also not in $p_{\phi(v)}$, so we have
        \begin{align*}
            o_{(\mathcal{T}')^{\phi(v)}}(\phi^v(e))&=o_{\mathcal{T}'}(\phi(e))=\phi(o_{\mathcal{T}}(e))=\phi(o_{\mathcal{T}^v}(e))\\
            t_{(\mathcal{T}')^{\phi(v)}}(\phi^v(e))&=t_{\mathcal{T}'}(\phi(e))=\phi(t_{\mathcal{T}}(e))=\phi(t_{\mathcal{T}^v}(e)).
        \end{align*}

    If the $e$ is in $p_v$, $\phi(e)=\phi^v(e)$ also is in $p_{\phi(v)}$, so we have
        \begin{align*}
            o_{(\mathcal{T}')^{\phi(v)}}(\phi^v(e))&=t_{\mathcal{T}'}(\phi(e))=\phi(t_{\mathcal{T}}(e))=\phi(o_{\mathcal{T}^v}(e))\\
            t_{(\mathcal{T}')^{\phi(v)}}(\phi^v(e))&=o_{\mathcal{T}'}(\phi(e))=\phi(o_{\mathcal{T}}(e))=\phi(t_{\mathcal{T}^v}(e)).
        \end{align*}
    This shows that $\phi^v$ is a homomorphism. 
\end{proof}
We can now define when a tree is \textbf{cocompact}. Note that, this definition is equivalent with saying that if we remove the directions of the edges and do not distinguish the root, then the automorphism group of the tree acts cocompactly on the set of vertices (endowed with the discrete topology).
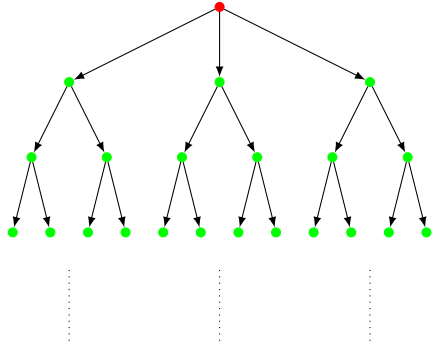
\begin{figure}
    \centering
    \begin{tikzpicture}
        [edge from parent/.style={draw,-latex},
            level distance=10mm,
            level 1/.style={sibling distance=20mm},
            level 2/.style={sibling distance=10mm,},
            level 3/.style={sibling distance=5mm,}
            ]
        \node[red node] {}
            child {node[green node] {}
                child {node[green node] {}
                    child {node[green node] {}}
                    child {node[green node] {}}
                }
                child {node[green node] {}
                    child {node[green node] {}}
                    child {node[green node] {}}
                }
            }
            child {node[green node] {}
                child {node[green node] {}
                    child {node[green node] {}}
                    child {node[green node] {}}
                }
                child {node[green node] {}
                    child {node[green node] {}}
                    child {node[green node] {}}
                }
            }
            child {node[green node] {}
                child {node[green node] {}
                    child {node[green node] {}}
                    child {node[green node] {}}
                }
                child {node[green node] {}
                    child {node[green node] {}}
                    child {node[green node] {}}
                }
            };
            \draw[dotted]   (-2,-3.5)--(-2,-4.5);
            \draw[dotted]   (0,-3.5)--(0,-4.5);
            \draw[dotted]   (2,-3.5)--(2,-4.5);
    \end{tikzpicture}
    \caption{This is a cocompact tree since if we change the root to be any green vertex we will obtain a tree that is isomorphic to the original tree.
    }
    \label{fig1}
\end{figure}
\begin{defn}
    A rooted tree $\mathcal{T}$ is cocompact (or undirected label-regular) if there exists a finite set $F$ and a labelling
        \[l:V\mathcal{T}\to F,\]
    s.t. for any $v,w\in V\mathcal{T}$
        \[l(v)=l(w)\implies \mathcal{T}^v\cong \mathcal{T}^w.\]
\end{defn}

To construct a tree from any rooted graph $(G,R)$ by looking at the set of paths
    \[\mathcal{W}(G)=\{e_1e_2\dots e_n\in EG^{<\infty}\mid \forall 1\leq i< n,\ t(e_i)=o(e_{i+1})\}\]
and enow it with the origin and terminus function $O(e_1e_2\dots e_n):=o(e_1)$ and $T(e_1e_2\dots e_n):=t(e_n)$. If we have two paths $p,q\in\mathcal{W}(G)$ we will call $p$ a prefix of $q$ of each other if we can write $q=pr$ for some $r\in\mathcal{W}(G)$. In this case, we will write $p\preceq q$. Note that we have $\varepsilon_v\preceq p$ if and only if $O(p)=v$, for any $v\in VG$.
We can construct the unfolding tree of the rooted graph $\mathcal{T}(G,R)$ by setting 
    \begin{align*}
        V\mathcal{T}(G,R):&=\{p\in\mathcal{W}(G)\mid O(p)=R\}\\
        E\mathcal{T}(G,R):&=\{(p,pe)\mid p\in V\mathcal{T}(G,R),\ e\in o^{-1}(T(p))\}
    \end{align*}
and of course $o(p,pe):=p$ and $t(p,pe):=pe$. If we change the root of an unfolding tree to some path $p$ we will change the name of the edge $(q,qe)$ to $(qe,q)$ in $\mathcal{T}(G,R)^p$, for any path $q\preceq p$ and any $e\in o^{-1}(T(q))$, s.t. $qe\preceq p$ as well, this reflects that in the new tree the origin and terminus of these edges are swapped .We can show when two graphs produce isomorphic unfolding trees by defining non-edge-collapsing(n.e.c.) graph equivalence relation
\begin{defn}
    A graph equivalence relation $\sim$ on the graph $G$ is a non-edge collapsing relation if for any two vertices $v,w\in VG$ there exists bijections 
        \[\pi_{v,w}:o^{-1}(v)\to o^{-1}(w)\]
    s.t. for any $e,f\in EG$
        \[e\sim f\iff \pi_{o(e),o(f)}(e)=f.\]
\end{defn}
Two graphs have the same unfolding trees if we can reduce them to the same graph via quotienting over a n.e.c. relation
\begin{lemma}{\cite{gorazd2023classification}*{Theorem 1, Lemma 4}}\label{lem-2}
    For any two rooted graphs $(G,R), (H,S)$ we have
        \[\mathcal{T}(G,R)\cong \mathcal{T}(H,S)\]
    if and only if there exists n.e.c. relations $\sim_G,\sim_H$ s.t.
        \[G/{\sim_G}\cong H/{\sim_H}\]
    via an isomorphism that takes $[R]_{\sim_G}$ to $[S]_{\sim_H}$.
\end{lemma}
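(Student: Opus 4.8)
The plan is to prove the two implications separately. The conceptual core is that the quotient map $q\colon G\to G/{\sim}$ attached to an n.e.c.\ relation $\sim$ is a \emph{covering} of rooted directed graphs: using the defining bijections $\pi_{v,w}$ (together with $\pi_{v,v}=\mathrm{id}$, which is forced by reflexivity of $\sim$) one checks that $q$ restricts to a bijection $o^{-1}(v)\to o^{-1}([v]_\sim)$ on every out-neighbourhood, injectivity being immediate and surjectivity following from the observation that every out-edge of $[v]_\sim$ in $G/{\sim}$ has a representative with origin exactly $v$. Coverings are precisely the maps that leave the unfolding (universal-cover) tree unchanged, and this is what drives both directions.

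\textbf{The ``if'' direction.} It suffices to prove that for a \emph{single} n.e.c.\ $\sim$ on $(G,R)$ one has $\mathcal{T}(G,R)\cong\mathcal{T}(G/{\sim},[R]_\sim)$; the full statement then follows by composing $\mathcal{T}(G,R)\cong\mathcal{T}(G/{\sim_G},[R])\cong\mathcal{T}(H/{\sim_H},[S])\cong\mathcal{T}(H,S)$, where the middle isomorphism uses the assumed root-preserving isomorphism $G/{\sim_G}\cong H/{\sim_H}$ and functoriality of $\mathcal{T}(-)$ under root-preserving graph isomorphisms. I would exhibit the map $\mathcal{T}(G,R)\to\mathcal{T}(G/{\sim},[R])$ by relabelling walks, $e_1e_2\cdots e_n\mapsto q(e_1)q(e_2)\cdots q(e_n)$, which visibly sends walks from $R$ to walks from $[R]$ and the root $\epsilon_R$ to $\epsilon_{[R]}$. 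That it is a bijection on vertices is \emph{unique path lifting}: injectivity is an induction on length using that $q$ is injective on out-neighbourhoods, and surjectivity is the step-by-step lift of a walk of $G/{\sim}$ issuing from $[R]$, at each stage the next edge having a unique $q$-preimage in the out-neighbourhood of the current lifted endpoint. The corresponding map on edges $(p,pe)\mapsto(q(p),q(p)q(e))$ is then a bijection respecting $o$ and $t$, yielding the desired isomorphism of rooted directed trees.

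\textbf{The ``only if'' direction.} This is the substantial half. Given $(G,R)$ I would define $v\approx w$ on $VG$ by $\mathcal{T}(G_v,v)\cong\mathcal{T}(G_w,w)$ and argue that this extends to an n.e.c.\ $\sim_G$ whose quotient is determined by $\mathcal{T}(G,R)$ up to root-preserving isomorphism. To fix the edge part coherently, choose for each $\approx$-class $\tau$ a representative vertex $v_\tau$ and, for every $v\approx v_\tau$, an isomorphism $\phi_v\colon\mathcal{T}(G_v,v)\to\mathcal{T}(G_{v_\tau},v_\tau)$ with $\phi_{v_\tau}=\mathrm{id}$; its restriction to the children of the root identifies $o^{-1}(v)$ with $o^{-1}(v_\tau)$, and $\pi_{v,w}:=(\phi_w|_{o^{-1}})^{-1}\circ(\phi_v|_{o^{-1}})$ then satisfies the cocycle identities $\pi_{v,v}=\mathrm{id}$, $\pi_{w,v}=\pi_{v,w}^{-1}$, $\pi_{v,w'}=\pi_{w,w'}\circ\pi_{v,w}$. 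Declaring $e\sim_G f\iff o(e)\approx o(f)\text{ and }\pi_{o(e),o(f)}(e)=f$ gives an equivalence relation (symmetry and transitivity from the cocycle), and since $\phi_v$ carries the subtree below a child $e$ isomorphically onto the subtree below $\phi_v(e)$ we get $e\sim_G f\Rightarrow t(e)\approx t(f)$, so $G/{\sim_G}$ is a well-defined graph and $\sim_G$ an n.e.c. Finally I would read off that $G/{\sim_G}$ is canonical in $\mathcal{T}(G,R)$: its vertices are the isomorphism classes of subtrees-below-vertices of $\mathcal{T}(G,R)$ (equivalently of the trees $\mathcal{T}(G_v,v)$, $v\in VG$), the out-edges of the class of a tree $\mathcal{T}'$ are the children of the root of $\mathcal{T}'$, the edge at a child $c$ has terminus the class of the subtree of $\mathcal{T}'$ below $c$, and the root class is $[\mathcal{T}(G,R)]$. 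Hence $\mathcal{T}(G,R)\cong\mathcal{T}(H,S)$ yields $G/{\sim_G}\cong H/{\sim_H}$ by an isomorphism matching $[R]$ with $[S]$. (One may also note, via the ``if'' direction applied to the graphs $G_v$, that \emph{every} n.e.c.\ on $G$ refines $\approx$, so $\sim_G$ is the coarsest — the ``minimal'' quotient — although this is not logically needed here.)

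\textbf{Main obstacle.} The delicate point is exactly turning $\approx$ into a bona fide n.e.c.: extracting a bijection of out-neighbourhoods from a tree isomorphism is easy, but the bijections must be chosen \emph{compatibly} across all pairs so that the induced edge relation is transitive and compatible with $t$ — this is what the single chosen isomorphism $\phi_v$ per vertex and the fixed representative per class accomplish, by making the family $(\pi_{v,w})$ a cocycle. The remaining checks — that unfolding commutes with the quotient, and that the ``minimal graph'' description does not depend on the representatives chosen — are then routine bookkeeping.
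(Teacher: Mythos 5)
This statement is not proved in the paper at all: it is quoted verbatim from \cite{gorazd2023classification} (Theorem 1 and Lemma 4 there), so there is no in-paper argument to compare yours against. Judged on its own, your proposal is essentially correct and is the natural way to prove the equivalence. The ``if'' direction is exactly the covering/unique-path-lifting argument one expects: reflexivity forces $\pi_{v,v}=\mathrm{id}$, hence no two edges with a common origin are identified, the quotient map is bijective on out-neighbourhoods, and lifting walks gives $\mathcal{T}(G,R)\cong\mathcal{T}(G/{\sim},[R]_\sim)$; composing with the assumed root-preserving isomorphism of quotients finishes it. The ``only if'' direction via the relation $v\approx w\iff\mathcal{T}(G_v,v)\cong\mathcal{T}(G_w,w)$, made into an n.e.c.\ by fixing one isomorphism $\phi_v$ per vertex onto a chosen class representative so that the family $\pi_{v,w}$ is a cocycle, is sound, and you correctly identify the one delicate point (coherence of the edge bijections, and $e\sim f\Rightarrow t(e)\approx t(f)$). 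Your closing observation that the resulting quotient is canonically described inside the tree (vertices are isomorphism classes of subtrees below vertices, edge multiplicities are counts of children by subtree class, root is the class of the whole tree) is what makes the two quotients match root-preservingly when $\mathcal{T}(G,R)\cong\mathcal{T}(H,S)$; this is in the spirit of the ``minimal'' or canonical graph used in the cited source and in \defref{ch2-def0}. One small caveat: the paper's stated definition of a graph equivalence relation literally asks for bijections $\pi_{v,w}$ for \emph{all} pairs of vertices, which cannot be meant (it would force constant out-degree); you tacitly use the intended reading, namely bijections for $\sim$-related vertices with the edge relation determined by them, and your construction only supplies those. With that reading made explicit, the argument goes through.
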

For graphs $(G,R),(H,S)$ that are isomorphic after quotienting over n.e.c. relation we write 
    \[(G,R)\simeq(H,S).\]
A useful result for creating n.e.c equivalence relations is
\begin{lemma}{\cite{gorazd2023classification}*{Lemma 1}}\label{lem-1}
    For any graphs $G,H$, if we have a surjective graph morphism
        \[\phi:G\to H\]
    s.t. for each vertex $v\in VG$ $\phi|_{o^{-1}(v)}$ is a bijection $o^{-1}(v)\to o^{-1}(\phi(v))$, there is a n.e.c relation $\sim$ and an isomorphism
        \[\psi:G/{\sim}\to H\]
    s.t. $\forall x\in EG\cup VG,\ \psi([x]_{\sim})=\phi(x)$.
\end{lemma}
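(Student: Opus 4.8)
The plan is to take $\sim$ to be the fiber relation of $\phi$ on $EG\cup VG$: declare $x\sim y$ exactly when $\phi(x)=\phi(y)$. Because $\phi$ is a graph morphism it carries vertices to vertices and edges to edges, so $\sim$ respects this partition and is plainly an equivalence relation; once $\sim$ is fixed the map required by the statement is forced, so I would simply set $\psi([x]_\sim):=\phi(x)$ and verify everything against that choice.

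First I would check that $\sim$ is a non-edge-collapsing graph equivalence relation. Given vertices $v,w$ with $\phi(v)=\phi(w)$, the hypothesis makes both $\phi|_{o^{-1}(v)}$ and $\phi|_{o^{-1}(w)}$ bijections onto the common set $o^{-1}(\phi(v))$, so I would take $\pi_{v,w}:=(\phi|_{o^{-1}(w)})^{-1}\circ\phi|_{o^{-1}(v)}$; this is a bijection $o^{-1}(v)\to o^{-1}(w)$ satisfying $\pi_{v,w}(e)=f\iff\phi(e)=\phi(f)\iff e\sim f$, which is exactly the compatibility demanded of a graph equivalence relation. When $\phi(v)\ne\phi(w)$ no out-edge of $v$ is $\sim$-related to an out-edge of $w$ (since $\phi$ intertwines the origin maps), so there is nothing further to impose. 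The non-edge-collapsing property is then immediate, as each $\phi|_{o^{-1}(v)}$ is injective, i.e.\ the quotient merges no two distinct out-edges at a vertex; I would also note at this point that $e\sim f$ forces $o(e)\sim o(f)$ and $t(e)\sim t(f)$ (again because $\phi$ intertwines $o$ and $t$), so the incidence maps of $G/{\sim}$ are well defined.

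It then remains to see that $\psi$ is a graph isomorphism onto $H$. Well-definedness and injectivity are built into the definition of $\sim$, since $[x]_\sim=[y]_\sim$ iff $\phi(x)=\phi(y)$; surjectivity is exactly the assumed surjectivity of $\phi$; and $\psi$ respects $o$ and $t$ because $\phi$ does and the incidence of $G/{\sim}$ is induced from that of $G$. Hence $\psi$ is an isomorphism with $\psi([x]_\sim)=\phi(x)$ for every $x\in EG\cup VG$, which is the assertion. None of this is deep; the one place the full hypothesis is used — rather than mere surjectivity of $\phi$ — is in producing the transport bijections $\pi_{v,w}$ and, equivalently, the injectivity of $\psi$ on edges, so I expect the only real work to be the bookkeeping of invoking ``bijective on out-stars'' at precisely those two points.
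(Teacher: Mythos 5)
The paper does not prove this lemma at all---it is imported from \cite{gorazd2023classification} as a cited result---so there is no internal proof to compare against; your argument (take $\sim$ to be the fiber relation of $\phi$, transport out-stars via $\pi_{v,w}=(\phi|_{o^{-1}(w)})^{-1}\circ\phi|_{o^{-1}(v)}$, and check that $\psi([x]_{\sim})=\phi(x)$ is a well-defined bijective morphism) is correct and is exactly the standard proof one would give. The only point worth noting is that you supply the bijections $\pi_{v,w}$ only for pairs with $\phi(v)=\phi(w)$; this matches the intended notion of a n.e.c.\ relation (and your observation that no out-edges of non-equivalent vertices are related covers the remaining pairs), even though the paper's abbreviated transcription of the definition literally quantifies over all pairs of vertices.
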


In order to show that when we change the root of an unfolding tree they still remain unfolding trees we will introduce a graph modification that causes it's unfolding tree to change roots. This procedure is demonstrated in \figref{fig2}. In order to do this we will take the path $p$ (the blue path in the left graph in \figref{fig2}) that we intend to be the new root of the tree and add vertices labelled $q$ for each prefix $q$ of $p$ (including the empty path and $p$ itself) to the graph. We will connect these vertices in reverse order, i.e. we will add edges from $qe$ to $q$ for any prefix $q$ of $p$ and any edge $e$ s.t. $qe\preceq p$ as well (this results in the blue path in the right graph in \figref{fig2}). Note that there is at most one such edge for any $q$. The added subgraph will correspond to the reversed path $p$ in the unfolding tree. To connect the reversed path to the original graph we look at each new vertex labelled with a prefix $q$ of $p$ and identify all the edges $d$ originating at $T(q)$, s.t when we expand $q$ by $d$ we exit the path $p$, i.e. $qd\not\preceq p$. For each of these prefixes and edges we add a corresponding edge $d_q$ going from the vertex labelled $q$ to $t(d)$ (the terminus of $d$ in the original graph). In \figref{fig2} those will be the orange edges. 
\begin{figure}
    \centering
    \begin{multicols}{2}
            \begin{tikzpicture}
                \node [big red node, label={above:$R$}]     (x) at (-2,-2)   {};
                \node[big green node, label={above:$v$}]   (y) at (2,-2)    {};

                \path[-latex]
                        (x)     edge[bend left,blue]                         node[above,black]             {$e$} (y)
                                edge[out=135, in=215, looseness=15]     node[left]              {$c$} (x)
                        (y)     edge[bend left]                         node[below]             {$d$} (x)
                               edge[out=45, in=315, looseness=15,blue]       node[right,black]             {$f$} (y)
                        ;    
            \end{tikzpicture}
        \columnbreak
        \begin{tikzpicture}
                \node[big red node, label={below:$R$}]     (x) at (-2,0)   {};
                \node[big green node, label={below:$v$}]   (y) at (2,0)    {};
                \node[big red node, label={above:$\varepsilon_R$}]     (-) at (-2,3)    {};
                \node[big green node, label={above:$e$}]   (e) at (0,3)    {};
                \node[big green node, label={above:$ef  $}]   (ef) at (2,3)   {};

                \path[-latex]
                        (x)     edge[bend left]                         node[above]             {$e$} (y)
                                edge[out=135, in=215, looseness=15]     node[left]              {$c$} (x)
                        (y)     edge[bend left]                         node[below]             {$d$} (x)
                                edge[out=45, in=315, looseness=15]      node[right]             {$f$} (y)
                        (ef)    edge[blue]                                  node[]                  {}  (e)
                        (e)     edge[blue]                                  node[]                  {}  (-)
                        (ef)    edge[orange]                                  node[right]             {$f_{ed}$}  (y)
                        (ef)    edge[orange]                                  node[right]             {$d_{ed}$}  (x)
                        (e)     edge[orange]                                  node[]             {$d_e$}  (x)
                        (-)     edge[orange]                                  node[left]             {$c_{\varepsilon_R}$} (x)
                        ;    
            \end{tikzpicture}
    \end{multicols}
    \caption{Graph $G$ on the left and graph $G^p$ for the blue path $p=ef$ on the right, note that the upper edges go from larger prefix to smaller prefix}
    \label{fig2}
\end{figure}
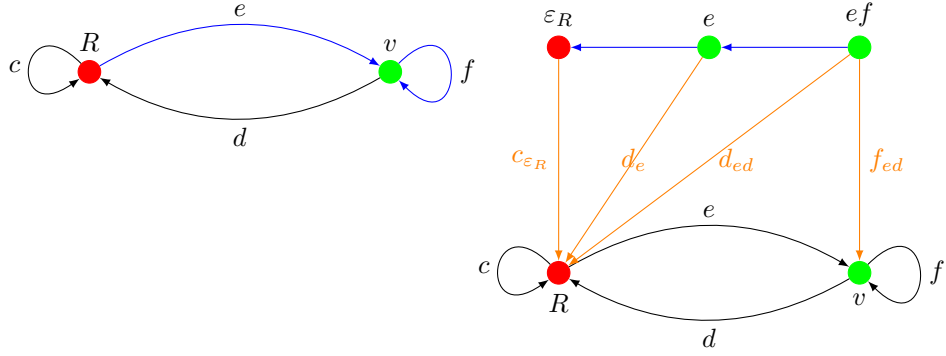

Formally we can describe the construction as follows.   
\begin{defn}\label{ch2-def-1}
    For any rooted graph $(G,R)$ and any path $p\in \mathcal{W}(G,R)$, define the graph $G^p$ to have the following extended vertex and edge set 
    \begin{align*}
        VG^p:=VG\cup &\{q\in \mathcal{W}(G,R)\mid q\preceq p\}\\
        EG^p:=EG\cup &\{(qe,q)\mid e\in EG,\ q,qe\preceq p\}\\
        \cup &\{d_q\mid q\preceq p,\ d\in E_o(T(q)),\ qd\not\preceq p \},
    \end{align*}
    with the $d_q$'s being new distinct edges, and define the origin and terminus functions for this graph ($o_{G^p}$ and $t_{G^p}$), by setting $o_{G^p}|_{EG}:=o_{G},\ t_{G^p}|_{EG}:=t_{G}$,
        \[\forall e\in EG,\ q,qe\preceq p,\ o_{G^p}(qe,q):=qe,\quad t_{G^p}(qe,q):=q\]
    and
        \[\forall q\preceq p,\ d\in E_o(T(q)),\ qd\not\preceq p,\ o_{G^p}(d_q):=q,\quad t_{G^p}(d_q):=t_G(d). \]
\end{defn}

Note that sometimes in this graph $p$ won't be a root, by convention we will then simply remove the vertices that are unreachable from $p$.
The intention of this construction is to allow us to construct a graph whose unfolding tree is $\mathcal{T}(G,R)$ with root $p$ instead of $\varepsilon_R$.\\
\begin{lemma}\label{ch2-lem0}
    For any $p\in\mathcal{T}(G,R)$ we have
        \[\mathcal{T}(G,R)^p\cong \mathcal{T}(G^p,p).\]
\end{lemma}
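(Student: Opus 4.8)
The plan is to build a surjective graph morphism $\pi\colon\mathcal{T}(G,R)^p\to G^p$ that restricts, at every vertex, to a bijection from its outgoing edges onto the outgoing edges of its image, and that sends the root $p$ of $\mathcal{T}(G,R)^p$ to the prefix vertex $p$ of $G^p$. Given such a $\pi$, \lemref{lem-1} produces a n.e.c.\ relation ${\sim}$ on $\mathcal{T}(G,R)^p$ together with an isomorphism $\psi\colon\mathcal{T}(G,R)^p/{\sim}\to G^p$ satisfying $\psi([x]_{\sim})=\pi(x)$, so $\psi([p]_{\sim})=p$; hence $(\mathcal{T}(G,R)^p,p)\simeq(G^p,p)$ and \lemref{lem-2} gives $\mathcal{T}\bigl(\mathcal{T}(G,R)^p,p\bigr)\cong\mathcal{T}(G^p,p)$. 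Finally $\mathcal{T}\bigl(\mathcal{T}(G,R)^p,p\bigr)\cong\mathcal{T}(G,R)^p$, since the unfolding tree of a tree based at its own root is canonically isomorphic to that tree: in any tree every directed walk from the root is the unique directed path to its endpoint, so $q\mapsto T(q)$ is an edge-compatible bijection from the walks based at the root onto the vertices. Combining the last two facts proves the lemma. Before starting I would replace $G$ by $G_R$, which changes neither $\mathcal{T}(G,R)^p$ nor $\mathcal{T}(G^p,p)$: the path $p$ and every vertex and edge added in \defref{ch2-def-1} already lie over $G_R$, and the convention following \defref{ch2-def-1} discards anything unreachable from $p$. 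Thus I may assume every vertex of $G$ is reachable from $R$.

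The core of the argument is the construction and verification of $\pi$. A vertex of $\mathcal{T}(G,R)^p$ is a walk $q\in\mathcal{W}(G,R)$; set $\pi(q):=q$, viewed as the prefix vertex of $G^p$, when $q\preceq p$, and $\pi(q):=T(q)\in VG\subseteq VG^p$ otherwise. Recall $E\mathcal{T}(G,R)^p=E\mathcal{T}(G,R)=\{(q,qe)\}$, with precisely the edges having $q,qe\preceq p$ reversed in $\mathcal{T}(G,R)^p$. I would send a reversed edge $(q,qe)$ (so $q,qe\preceq p$) to the edge $(qe,q)\in EG^p$; an edge $(q,qe)$ with $q\preceq p$ but $qe\not\preceq p$ to the edge $e_q\in EG^p$; and an edge $(q,qe)$ with $q\not\preceq p$ to the original edge $e\in EG\subseteq EG^p$. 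A short check against \defref{ch2-def-1}, using that $o(e)=T(q)$ whenever $qe\in\mathcal{W}(G,R)$, confirms that $\pi$ respects the origin and terminus functions (this is exactly where the edge reversal along the path to $p$ built into \defref{ch2-def-1} is matched by the reversal defining $\mathcal{T}(G,R)^p$), and that $\pi(p)=p$.

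For local bijectivity and surjectivity I would split the vertices $q$ of $\mathcal{T}(G,R)^p$ into four cases: $q=p$; $q\preceq p$ with $\varepsilon_R\neq q\neq p$; $q=\varepsilon_R$ (assuming $p\neq\varepsilon_R$); and $q\not\preceq p$. In the first three cases the children of $q$ in $\mathcal{T}(G,R)^p$ are the prefix of $p$ one step shorter than $q$, when it exists, together with the walks $qe$ for those $e\in o^{-1}(T(q))$ with $qe\not\preceq p$; under $\pi$ the former goes to the unique edge of $G^p$ from the prefix vertex $q$ to the next-shorter prefix of $p$, and the latter to the edges $e_q$, and these together biject onto the outgoing edges of the prefix vertex $q$ in $G^p$ (the single short-prefix edge exactly makes up for omitting the next edge of $p$ from the $e_q$). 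In the fourth case the local structure of $q$ in $\mathcal{T}(G,R)^p$ agrees with that in $\mathcal{T}(G,R)$ and $\pi$ acts as $e\mapsto e$, a bijection onto $o^{-1}(T(q))$, which is exactly the set of outgoing edges of the vertex $T(q)$ of $G^p$. Surjectivity on vertices then follows (each prefix vertex is $\pi(q)$ for the matching $q\preceq p$; each $G$-vertex of $G^p$ is reachable from $p$ by the convention, and reading a witnessing walk back into $G$ produces a suitable $q\not\preceq p$ with $T(q)$ that vertex), and surjectivity on edges is immediate from local bijectivity.

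The step I expect to be the main obstacle is pinning $\pi$ and the case analysis down exactly at the prefixes of $p$: these are the vertices at which $\mathcal{T}(G,R)^p$ ``branches'', carrying one child along the reversed $p$-direction (which must be routed to the short-prefix edge of $G^p$) and children leaving $p$ (routed to the $e_q$), and one has to check that the out-degrees and the identifications match \defref{ch2-def-1} precisely; the degenerate case $p=\varepsilon_R$ (where, up to the unreachable-vertex convention, $G^p$ is merely $G$ with $R$ relabelled and $\mathcal{T}(G,R)^p=\mathcal{T}(G,R)$) and the bookkeeping around that convention are minor but deserve a sentence. An alternative that avoids \lemref{lem-1} and \lemref{lem-2} is to write the isomorphism $\mathcal{T}(G,R)^p\to\mathcal{T}(G^p,p)$ down directly: for a walk $q$ from $R$ let $m$ be the longest common prefix of $q$ and $p$, and send $q$ to the walk in $G^p$ that runs down the edges $(q'e',q')$ from $p$ to $m$ and then, if $q\neq m$, takes $d_m$ (where $d$ is the edge of $q$ immediately after $m$) followed by the remaining edges of $q$; the same four-case bookkeeping shows this is a well-defined, edge-preserving bijection.
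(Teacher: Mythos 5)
Your proposal is correct and takes essentially the same route as the paper's own proof: the identical map (identity on prefixes of $p$, terminus otherwise, with the same three-way split on edges), the same four-case check of local bijectivity at $q=p$, intermediate prefixes, $\varepsilon_R$, and non-prefixes, and the same conclusion via \lemref{lem-1} together with $\mathcal{T}(G,R)^p\cong\mathcal{T}(\mathcal{T}(G,R)^p,p)\cong\mathcal{T}(G^p,p)$. The only differences are cosmetic, e.g.\ your extra care about surjectivity, reachability, and the degenerate case $p=\varepsilon_R$.
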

\begin{proof}
    We will define a surjective homomorphism that induces a non-edge-collapsing equivalence relation
        \[f:\mathcal{T}(G,R)^p\to G^p,\]
    as follows
        \[\forall q\in V\mathcal{T}(G,R),\ f_V(q):=   \begin{cases}
                                                        q, & q\preceq p\\
                                                        T(q), & q\not\preceq p
                                                    \end{cases} \]
        and 
        \[\forall q\in V\mathcal{T}(G,R),\ e\in E_o(T(p)),\ f_E(q,qe):=                                                 \begin{cases}
                                                        (qe,q), & q,qe\preceq p\\
                                                        e_q, & q\preceq p,\ qe\not\preceq p\\
                                                        e, & q\not\preceq p
                                                    \end{cases}. \]
    To see that this is in fact a homomorphism we have to look at some edge $(q,qe)\in E\mathcal{T}(G,R)$, and distinguish the  following cases.
    
    If $q,qe\preceq p$ we have
        \[o_{G^p}(f(q,qe))=o_{G^p}((qe,q))=qe=f(qe)=f(o_{\mathcal{T}(G,R)^p}(q,qe))\]
    and 
        \[t_{G^p}(f(q,qe))=t_{G^p}((qe,q))=q=f(q)=f(t_{\mathcal{T}(G,R)^p}(q,qe)),\]
    since $(q,qe)$ is on the path between $\varepsilon_R$ to $p$ in $\mathcal{T}(G,R)$.

    If $q\preceq p$, but $qe\not\preceq p$ we have
        \[o_{G^p}(f(q,qe))=o_{G^p}(e_q)=q=f(q)=f(o_{\mathcal{T}(G,R)^p}(q,qe))\]
    and
        \[t_{G^p}(f(q,qe))=t_{G^p}(e_q)=t_{G}(e)=f(qe)=f(t_{\mathcal{T}(G,R)^p}(q,qe)),\]
    since $(q,qe)$ is not on the path between $\varepsilon_R$ to $p$ in $\mathcal{T}(G,R)$.

    If $q\not\preceq p$ and $qe\not\preceq p$ we have 
        \[o_{G^p}(f(q,qe))=o_{G^p}(e)=o_G(e)=T(q)=f(q)=f(o_{\mathcal{T}(G,R)^p}(q,qe))\]
    and
        \[t_{G^p}(f(q,qe))=t_{G^p}(e)=o_G(e)=T(q)=f(q)=f(o_{\mathcal{T}(G,R)^p}(q,qe)),\]
    since again $(q,qe)$ is not on the path between $\varepsilon_R$ to $p$ in $\mathcal{T}(G,R)$.

    Now to show that $f$ induces a non-edge-collapsing equivalence relation, we look at each $p\in \mathcal{T}(G,R)$ and distinguish the following cases.

    If $q\not\preceq p$ we have
        \[o_{\mathcal{T}(G,R)^p}^{-1}(q)=\{(q,qe)\mid e\in o^{-1}(T(p))\}\]
    in $\mathcal{T}(G,R)^p$. Since $f(q,qe)=e$ for each $e\in o^{-1}(T(q))=o_{G^p}^{-1}(T(q))$, we can see that $f|_{o^{-1}(q)}$ is a bijection.

    If $\varepsilon_R\neq q \precneq p$, take $e_+\in EG$ to be the unique edge s.t $qe_+\preceq p$, and take $q_{-}\preceq q$ and $e_-\in o^{-1}(T(q_{-}))$ to be s.t. $q_-e_-=q\preceq p$ we have
        \[o_{\mathcal{T}(G,R)^p}^{-1}(q)=\{(q,qe)\mid e\in E_o(T(p))\setminus\{e_+\}\}\cup\{(q_{-},q)\}\]
    and
        \[o_{G^p}^{-1}(q)=\{(q,e)\mid  e\in o^{-1}(T(p))\setminus\{e_+\}\}\cup \{(q,q_-)\}.\]
    From this form of the outgoing neighbourhoods we can see that $f|_{o_{\mathcal{T}(G,R)^p}^{-1}(q)}$ is a bijection.

    If $q=\varepsilon_R$ we take $e_+\in EG$ as before ($q_-$ and $e_-$ cannot be defined). Then
        \[o_{\mathcal{T}(G,R)^p}^{-1}(q)=\{(q,qe)\mid e\in o^{-1}(T(p))\setminus\{e_+\}\}\]
    and
        \[o_{G^p}^{-1}(q)=\{(q,e)\mid  e\in o^{-1}(T(p))\setminus\{e_+\}\}.\]
    From this form of the outgoing neighbourhoods we can see that $f|_{o_{\mathcal{T}(G,R)^p}^{-1}(q)}$ is a bijection.

    If $q=p$ we take $q_-$ and $e_-$ as before ($e_+$ doesn't exist). Then
        \[o_{\mathcal{T}(G,R)^p}^{-1}(q)=\{(q,qe)\mid e\in o^{-1}(T(p))\}\cup \{(q,q_-)\}\]
    and
        \[o_{G^p}^{-1}(q)=\{(q,e)\mid  e\in o^{-1}(T(p))\}\cup\{(q_-,q)\}.\]
    From this form of the outgoing neighbourhoods we can see that $f|_{o_{\mathcal{T}(G,R)^p}^{-1}(q)}$ is a bijection.

    So, by \lemref{lem-1}, there is some non-edge-collapsing equivalence relation $\sim$ on $\mathcal{T}(G,R)^p$ s.t.
        \[\mathcal{T}(G,R)^p/{\sim}\cong G^p,\]
    and thus
        \[\mathcal{T}(G,R)^p\cong\mathcal{T}(\mathcal{T}(G,R)^p,p)\cong\mathcal{T}(G^p,p).\]
\end{proof}

This result allows us to restate the property of an unfolding tree being cocompact as saying that
    \[|\{(G^p,p)\mid p\in\mathcal{T}(G,R)\}/{\simeq}|<\infty.\]
\section{Actual labelling}
To better describe the neighbourhoods of a vertex, we will use multisets:
\begin{defn}
    A multiset is a tuple $\mathbf{M}=(M,m)$ where $M$ is the base set and $m:M\to \mathds{Z}_{>0}$ is the multiplicity function. We will treat an ordinary set $N$ as multisets with base set $N$ and  all multiplicities being $1$.
\end{defn}
    For an multisets $M,N$, we will write $M\subseteq_b N$ to indicate that $M$ is a multiset with base set contained in the base set of $N$. We write $M\subseteq N$ if $M\subseteq_b N$ and for each $x\in M$ $m_M(x)\leq m_N(x)$. We define the cardinality $|\cdot|$ of the multiset as the sum of the multiplicities.
    If we have a function $f$ and a multiset $X$ with base set in the domain we can define the multi-image of the set $f(X)$, by having the base set being $f(X)$ and the multiplicity of each $f(x)$ being the sum of the multiplicities of $y$'s s.t. $f(y)=f(x)$. When we want to treat a ordinary set $X$ as a multiset in an image we will write $f_m(X)$.  The union of multisets is defined by taking the union of the base sets and adding the multiplicities. The set difference of two sets is defined by subtracting the multiplicities of one set from the other and discarding elements with multiplicity $\leq 0$.

To show that a cocompact tree $\mathcal{T}$ with root $R$ is an unfolding tree of some finite graph, we can start by labelling each vertex $q$ of a tree $\mathcal{T}$ with the isomorphism class of $\mathcal{T}^q$, i.e. $l(q)=[\mathcal{T}^q]_{\cong}$. This labelling however does not induce a non-edge-collapsing equivalence relation in general. To achieve a labelling that can be extended in this way we will have to look at the predecessor $q_{-}$ of $q$ i.e. the unique vertex that has $q$ as an outgoing neighbour and set $l_p(q)=[\mathcal{T}^{q_{-}}]_{\cong}$. Note that if we have $l(q_1)=l(q_2)$ and $l_p(q_1)=l_p(q_2)$ for some $q_1,q_2\in V\mathcal{T}$ that are not the root, we must have $\mathcal{T}_{q_1}\cong \mathcal{T}_{q_2}$. This shows that there is a non-edge collapsing equivalence relation $\sim$ on $\mathcal{T}$ s.t.
    \[q_1\sim q_2\iff (l(q_1),l_p(q_1))=(l(q_2),l_p(q_2)),\]
for any vertices that are not the root. Since $\mathcal{T}$ is cocompact, $\mathcal{T}/{\sim}$ is finite. Of course the labels $l$ and $l_p$ filter through $\sim$ and we can apply them to vertices of $G$ as well. We will look at this graph as the canonical graph that unfolds into $\mathcal{T}$. We will later show that it is in fact the smallest in the sinkless case.
\begin{defn}\label{ch2-def0}
    For any cocompact rooted tree $(\mathcal{T},R)$, the canonical graph is defined as
        \[CG(\mathcal{T},R)=\mathcal{T}/{\sim},\]
    with $\sim$ being the non-edge collapsing equivalence defined above. 
\end{defn}

The labels $l,l_p$ have certain useful properties that make them into a labelling we will call actual, that is defined as follows.
\begin{defn}\label{ch2-def1}
    For any rooted graph $(G,R)$, with $t^{-1}(R)=\emptyset$ and any finite set $X$ the pair of labellings $l:VG\to X$ and $l_p:VG\setminus \{R\}\to X$ is \textbf{actual} if for each $x\in X$ there is a multiset $M_x\subseteq_b X$ s.t. for each $v\in VG\setminus \{R\}$
    \begin{enumerate}
        \item $l_p(v)\in M_{l(v)}$
        \item $l(t_m(o^{-1}(v)))= M_{l(v)}\setminus\{l_p(v)\}$
        \item $\forall e\in t^{-1}(\{v\}),\ l(o(e))=l_p(v)$
        \item $l(t_m(o^{-1}(R)))= M_{l(R)}$
    \end{enumerate}
    We will call a pair labellings on an unrooted graph that satisfies the first 3 of the above points \textbf{pre-actual} .
\end{defn}
We will show in this paper that if a graph admits an actual labelling its unfolding tree is cocompact and conversely any sinkless graph with cocompact unfolding tree has an actual labelling. The latter is relatively straightforward, while the former requires some more work.

Note that if we take the labelling $(l_p,l)$, it will induce a non-edge-collapsing equivalence relation.
\begin{lemma}\label{ch2-lem0.5}
    The relation $\sim$ on $VG$ defined as:
        \[v\sim w \iff (l_p(v),l(v))=(l_p(w),l(w))\]
    for each $v,w\in VG$, can be extended onto edges to form a non-edge-collapsing equivalence relation, $\sim_l$.
\end{lemma}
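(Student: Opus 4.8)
The plan is to realise $\sim$ as the kernel of a surjective graph morphism that is bijective on each out-neighbourhood, and then quote \lemref{lem-1}. Let $X$ be the finite label set of the actual labelling $(l,l_p)$ and let $(M_x)_{x\in X}$ be the multisets from \defref{ch2-def1}. I will build a graph $H$ with
\[
VH:=\bigl\{\,(l_p(v),l(v))\mid v\in VG\setminus\{R\}\,\bigr\}\cup\{\star\},
\]
where $\star$ is a fresh symbol, and set $\phi_V(R):=\star$ and $\phi_V(v):=(l_p(v),l(v))$ otherwise. By the very definition of $\sim$ we then have $v\sim w\iff\phi_V(v)=\phi_V(w)$, and $R$ lies alone in its class because $l_p$ is undefined at $R$. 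So once $H$ and $\phi$ are complete, \lemref{lem-1} produces a non-edge-collapsing relation $\sim_l$ with $G/{\sim_l}\cong H$ via some $\psi$ satisfying $\psi([x]_{\sim_l})=\phi(x)$, whence $v\sim_l w\iff\phi_V(v)=\phi_V(w)\iff(l_p(v),l(v))=(l_p(w),l(w))$, i.e.\ $\sim_l$ restricts to $\sim$ on vertices, as wanted.

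The real content is to put edges on $H$. The point --- and this is exactly where the four conditions of \defref{ch2-def1} enter --- is that the labelled out-neighbourhood of a vertex $v$ is determined by $\phi_V(v)$ alone. Indeed, for any $e$ with $o(e)=v$ and $t(e)=w$, condition~(3) forces $l_p(w)=l(o(e))=l(v)$, so every out-neighbour of $v$ carries $l_p$-label $l(v)$; and condition~(2) (or condition~(4) when $v=R$) says the multiset of $l$-labels of the out-neighbours of $v$ equals $M_{l(v)}\setminus\{l_p(v)\}$ (respectively $M_{l(R)}$), which by condition~(1) has cardinality $|M_{l(v)}|-1$ (respectively $|M_{l(R)}|$), and this cardinality is $|o^{-1}(v)|$. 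So for $u=(a,b)\in VH$ fix once and for all a set $N_u$ in bijection with the multiset $M_b\setminus\{a\}$, and fix $N_\star$ in bijection with $M_{l(R)}$; take $EH:=\bigsqcup_{u\in VH}\bigl(\{u\}\times N_u\bigr)$, with $o_H(u,n):=u$, and with $t_H((a,b),n):=(b,c)$ when $n$ corresponds to a copy of $c\in M_b\setminus\{a\}$, respectively $t_H(\star,n):=(l(R),c)$. These targets do lie in $VH$: an out-neighbour $w$ with $l(w)=c$ of a witness of $u$ is distinct from $R$ (as $t^{-1}(R)=\emptyset$) and has $(l_p(w),l(w))=(b,c)$, respectively $(l(R),c)$.

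It remains to extend $\phi$ to edges. For each $v\in VG$ the previous paragraph lets me pick a bijection $\beta_v\colon o^{-1}(v)\to N_{\phi_V(v)}$ matching target $l$-labels, i.e.\ if $\beta_v(e)$ corresponds to a copy of $c$ then $l(t(e))=c$; put $\phi_E(e):=\bigl(\phi_V(o(e)),\beta_{o(e)}(e)\bigr)$. Then $\phi=(\phi_V,\phi_E)$ is a graph morphism, since $o_H(\phi_E(e))=\phi_V(o(e))$ is immediate and $t_H(\phi_E(e))=\phi_V(t(e))$ because the recorded target is exactly $(l_p(t(e)),l(t(e)))$; it is surjective (on vertices by the definition of $VH$ and $\phi_V(R)=\star$, on edges because each $n\in N_u$ is hit by an out-edge of any witness of $u$); and $\phi_E$ restricts to the bijection $\beta_v$ from $o^{-1}(v)$ onto $o_H^{-1}(\phi_V(v))=\{\phi_V(v)\}\times N_{\phi_V(v)}$. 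Now \lemref{lem-1} applies and finishes the proof as indicated in the first paragraph.

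The step I expect to need the most care is the middle one: verifying that conditions (1)--(4) really do pin down the entire labelled out-neighbourhood of a vertex --- including its out-degree, so that $N_u$ and the bijections $\beta_v$ exist --- and checking that the degenerate cases, a vertex $v$ with $M_{l(v)}=\{l_p(v)\}$ (for which $o^{-1}(v)=\emptyset$) and the root itself, are absorbed by the construction without any change.
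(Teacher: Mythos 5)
Your proof is correct. The route differs from the paper's in its packaging: the paper proves the lemma directly, by fixing for each pair of equivalent vertices $v,w$ a label-preserving bijection $\pi_{v,w}\colon o^{-1}(v)\to o^{-1}(w)$ (which exists because, by conditions (2)--(4), equivalent vertices have the same multiset of out-neighbour labels), chosen coherently ($\pi_{v,v}=\mathrm{id}$, $\pi_{u,v}\circ\pi_{v,w}=\pi_{u,w}$), and then declaring $e\sim f$ iff $o(e)\sim o(f)$ and $\pi_{o(e),o(f)}(e)=f$; it never invokes \lemref{lem-1}. You instead build the expected quotient graph $H$ out of the label data and a surjective morphism $G\to H$ that is bijective on out-neighbourhoods, and let \lemref{lem-1} manufacture the relation. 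The two arguments rest on exactly the same observation --- that conditions (1)--(4) pin down the labelled out-neighbourhood (including the out-degree) from the pair $(l_p(v),l(v))$, with $t^{-1}(R)=\emptyset$ and condition (3) ensuring the targets carry the right pair --- and your device of mapping each $o^{-1}(v)$ to a fixed reference set $N_{\phi_V(v)}$ is precisely one way of guaranteeing the coherence condition the paper imposes on its bijections (take $\pi_{v,w}=\beta_w^{-1}\circ\beta_v$). What your version buys is the explicit model graph $H$, essentially the canonical quotient of \defref{ch2-def0}, at the cost of a longer verification; the paper's version is shorter but leaves the quotient implicit.
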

\begin{proof}
    Define for $v\in V$ the labelling $l_a(v)=(l_p(v),l(v))$.\\
    We will fix for each $v,w\in VG$, with $l_a(v)=l_a(w)$ a bijection $\pi_{v,w}:o^{-1}(v)\to o^{-1}(w)$ s.t. $l(t(e))=l(t(\pi_{v,w}(e)))$, which exists since $l(t_m(o^{-1}(v)))=l(t_m(o^{-1}(w)))$, by the second property of \defref{ch2-def1}. We can choose these bijections s.t. $\pi_{v,v}=\text{id}_{o^{-1}(v)}$ for any vertex $v$ and for any $3$ vertices $u,v,w$: $\pi_{u,v}\circ \pi_{v,w}=\pi_{u,w}$.

    Then by setting
        \[\forall v,w\in VG,\ v\sim w\iff l_a(v)=l_a(w)\]
    and,
        \[\forall e,f\in EG,\ e\sim f\iff (o(e)\sim o(f))\land \pi_{o(e),o(f)}(e)=f\]
    this gives us a graph equivalence relation that is non-edge collapsing.
\end{proof}
Note that after factoring over this equivalence relation we still get a graph with an actual labelling just by setting
    \[l([v]_{\sim_l})=l(v)\quad\text{and}\quad l_p([v]_{\sim_l})=l_p(v)\]
for any $v\in VG$.

One can show that an actual pair of labellings exists if and only if the unfolding tree of a graph is cocompact. To show this we will first show that:
\begin{lemma}\label{ch2-lem14}
    For any rooted tree $\mathcal{T}$ with an actual labelling, for each $p,q\in \mathcal{T}$ with $l(p)=l(q)$ we have
        \[\mathcal{T}^{p}\cong\mathcal{T}^{q}.\]
\end{lemma}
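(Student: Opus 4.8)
The plan is to show that the re-rooted tree $\mathcal{T}^{p}$ carries an actual labelling built from the very same multiset data $(M_x)_{x\in X}$ as $\mathcal{T}$, with the new root $p$ receiving the label $l(p)$, and then to prove that the multiset data together with the root label determines a rooted tree up to isomorphism. Combining these two facts: if $l(p)=l(q)$, then $\mathcal{T}^{p}$ and $\mathcal{T}^{q}$ are rooted trees with actual labellings over the same $X$, the same $(M_x)_x$, and the same root label, hence isomorphic.

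\medskip\noindent\textbf{Step 1 (inherited labelling).} Keep $l^{p}:=l$ on $V\mathcal{T}^{p}=V\mathcal{T}$, and for $v\neq p$ let $l_p^{p}(v)$ be the label of the unique parent of $v$ in the rooted tree $\mathcal{T}^{p}$ (recall $\mathcal{T}^{p}$ is a tree rooted at $p$, so $t^{-1}(p)=\emptyset$ and $l_p^{p}$ is well defined). I would check the four axioms of \defref{ch2-def1} for $(\mathcal{T}^{p},p)$ with the same $M_x$, splitting on whether $v$ lies on the path $P$ from $R$ to $p$ in $\mathcal{T}$. Off $P$ the parent and children of $v$ are unchanged, so the axioms are inherited verbatim. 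On $P$, with $v\neq R,p$, write $v_{-}$ for the old parent and $v_{+}$ for the old child lying on $P$; then in $\mathcal{T}^{p}$ the parent of $v$ is $v_{+}$ while its children are $\{v_{-}\}$ together with the old children other than $v_{+}$, so the multiset of child-labels of $v$ in $\mathcal{T}^{p}$ is $\{l(v_{-})\}\cup\big((M_{l(v)}\setminus\{l_p(v)\})\setminus\{l(v_{+})\}\big)=M_{l(v)}\setminus\{l(v_{+})\}$, where I used $l(v_{-})=l_p(v)$ and $l_p(v)\in M_{l(v)}$; this is axiom 2 for $\mathcal{T}^{p}$ since $l_p^{p}(v)=l(v_{+})$. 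The case $v=R$ is identical using axiom 4 of $\mathcal{T}$ in place of axiom 2, and the case $v=p$ gives child-label multiset $M_{l(p)}$ (the old children of $p$ plus $p_{-}$), which is axiom 4 for $\mathcal{T}^{p}$; axiom 1 for $\mathcal{T}^{p}$ holds because in each case the new parent of $v$ carries a label in $M_{l(v)}$, and axiom 3 is immediate from the definition of $l_p^{p}$.

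\medskip\noindent\textbf{Step 2 (rigidity).} I claim two rooted trees $(\mathcal{S}_1,R_1),(\mathcal{S}_2,R_2)$ with actual labellings over the same $X$, the same $(M_x)_x$, and $l_1(R_1)=l_2(R_2)$ are isomorphic. Build the isomorphism by recursion on distance from the root: send $R_1\mapsto R_2$; since by axiom 4 both roots have child-label multiset $M_{l_1(R_1)}$, fix a label-preserving bijection between their children; and whenever a vertex $v\in\mathcal{S}_1$ has been matched to $w\in\mathcal{S}_2$ with $l_1(v)=l_2(w)$ and (if not the root) $l_{1,p}(v)=l_{2,p}(w)$, axioms 2--3 say the children of $v$ and of $w$ have the same label multiset $M_{l_1(v)}\setminus\{l_{1,p}(v)\}$ and that $l_{1,p}$ of each child of $v$ equals $l_1(v)=l_2(w)$, so any label-preserving bijection between these children extends the partial map one level deeper while keeping the invariant. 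Since every vertex of a rooted tree is at finite distance from the root, the union of these partial maps is a bijection $V\mathcal{S}_1\to V\mathcal{S}_2$ taking edges to edges, hence an isomorphism of rooted directed trees.

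\medskip\noindent\textbf{Conclusion and main difficulty.} Applying Step 1 at $p$ and at $q$ and then Step 2 with roots $p$ and $q$ (legitimate since $l^{p}(p)=l(p)=l(q)=l^{q}(q)$) yields $\mathcal{T}^{p}\cong\mathcal{T}^{q}$. I expect the main obstacle to be the bookkeeping in Step 1: one must track carefully how reversing $P$ swaps the unique incoming edge of an on-path vertex with one of its outgoing edges, and the identity $\{l_p(v)\}\cup(M_{l(v)}\setminus\{l_p(v)\})=M_{l(v)}$ — valid precisely because of axiom 1 — is exactly what lets axiom 2 survive this swap. Step 2 is routine, but since the tree may be infinite it should be phrased as a union of an increasing chain of partial isomorphisms indexed by depth rather than defined all at once.
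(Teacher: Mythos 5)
Your proposal is correct. Step 1 checks out: the multiset identity $\{l_p(v)\}\cup\bigl((M_{l(v)}\setminus\{l_p(v)\})\setminus\{l(v_{+})\}\bigr)=M_{l(v)}\setminus\{l(v_{+})\}$ is exactly what is needed on the reversed path, and axiom 1 is indeed the reason the swap of parent and on-path child is harmless; Step 2 is a sound level-by-level construction (and your remark that it must be phrased as an increasing union of depth-$n$ partial isomorphisms is the right care for infinite trees). Your route differs from the paper's in organization. The paper also begins with your Step 1 observation, but only in the asymmetric form ``we may assume $p$ is the root''; it then passes to the finite quotient $G=\mathcal{T}/{\sim}$ of \lemref{ch2-lem0.5} so that $\mathcal{T}=\mathcal{T}(G,R)$, writes $q=d_1\dots d_N$, and recursively constructs an isomorphism $\mathcal{T}(G,R)\to\mathcal{T}(G,R)^{q}$ directly, with a case split inside the recursion according to whether the current image is a prefix of $q$ (this is where the reversed-path bookkeeping lives in the paper). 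You instead front-load all of that bookkeeping into Step 1 by re-rooting at \emph{both} $p$ and $q$, and then invoke a symmetric rigidity statement: an actual labelling over a fixed $(X,(M_x)_x)$ together with the root label determines the rooted tree up to isomorphism. Your version buys modularity and avoids both the quotient graph and the in-recursion case analysis, and the rigidity lemma of Step 2 is a reusable statement in its own right (it is essentially the uniqueness of the tree unfolded from the labelling data); the paper's version keeps everything inside one recursion and leans on machinery (\lemref{ch2-lem0.5}) it has already set up. Both hinge on the same two ingredients: label-preserving bijections between child sets supplied by axiom 2, and axiom 1 absorbing the parent/child swap along the reversed path.
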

\begin{proof}
    Note that if $\mathcal{T}$ has an actual labelling $(l,l_p)$ then $\mathcal{T}^{p}$ also has an actual labelling $(l,\tilde{l}_p)$, with $\tilde{l}_p(v)$ being the label of the immediate predecessor of any vertex $v$ in $\mathcal{T}^p$. We may thus assume that $p$ is the root of $\mathcal{T}$.

    Note that for $\sim$ as in \lemref{ch2-lem0.5} the graph $G=\mathcal{T}/{\sim}$ is finite and $\mathcal{T}\cong\mathcal{T}(G,R)$ where $R$ is the equivalence class of the root of $\mathcal{T}$. So we can assume $\mathcal{T}=\mathcal{T}(G,R)$ going forwards and so by previous assumption $p=\varepsilon_R$.

    We can of course also assume that $q\neq \varepsilon_R=p$ and write $q=d_1d_2\dots d_N$ for $N\geq 1$ and set $q_n:=d_1d_2\dots d_n$ for any $n\leq N$.
    We will recursively construct a map
        \[\phi:\mathcal{T}(G, R)\to\mathcal{T}(G, R)^{q}\]
    such that $\phi(\varepsilon_R)=q$ and $\forall r\in \mathcal{T}(G, R),\ l(\phi(r))=l(r)$.
    
    Firstly, note that since $l(\varepsilon_R)=l(q)$, we have some $e_0\in o_{\mathcal{T}(G,R)}^{-1}(\varepsilon_R)$ s.t. there exist a bijection $\pi:o_{\mathcal{T}(G,R)}^{-1}(\varepsilon_R)\setminus\{e_0\}\to o^{-1}(q)$, that preserves the labels and $l(t(e_0))=l_p(q)=l(q_{N-1})$. We will thus define $\phi(e)=\pi(e)$ for any $e\in o^{-1}(\varepsilon_R)\setminus\{e_0\}$ and $\phi(e_0)=(q_{N-1},q_N)$ (the edge from $q_N$ to $q_{N-1}$).

    Now if we have some non-empty $r\in\mathcal{T}(G,R)$, with $\phi$ already defined on $r$ and all of its prefixes, we will define $\phi$ on $o^{-1}(r)$ as follows.Take $r_{-}$ to be $r$ with its last edge removed.
    
    If $\phi(r)$ is not a prefix of $q$ then $\phi(r_{-})$ is also the immediate prefix of $\phi(r)$ and thus
        \[l_p(r)=l(r_{-})=l(\phi(r_{-}))=l_p(\phi(r)).\]
    So we have a bijection $\pi_r:o^{-1}(r)\to o^{-1}(\phi(r))$. Now we can define $\phi(e)=r\pi_r(e)$ for any $e\in o^{-1}(r)$.

    If $\phi(r)$ is a proper prefix of $q$, then there is a $k<N$ s.t. $\phi(r)=q_k$ and $\phi(r_{-})=q_{k+1}$. So since by induction
        \[l_p(r)=l(r_{-})=l(q_{k+1}),\]
    we have, by properties of the actual labelling, an edge $e_0\in o^{-1}(r)$ with $l(t(e_0))=l_p(q_k)=l(q_{k-1})$ and a label respecting bijection $\pi_r:o^{-1}(r)\setminus\{e_0\}\to o^{-1}(q_k)\setminus\{(q_k,q_{k+1})\}$ now we can define for any $e_0\neq e \in o^{-1}(r)$, $\phi(e):=\pi_r(e)$ and $\phi(e_0)=(q_{k-1},q_{k})$.

    This defines a tree isomorphism between $\mathcal{T}(G,R)$ and $\mathcal{T}(G,R)^{q}$.
\end{proof}

This result shows that any tree with an actual labelling is cocompact.

Conversely, we can see that each actually labelled graph unfolds into another actually labelled tree by showing the following.
\begin{lemma}\label{ch2-lem0.6}
   Any rooted graph $(G,R)$, with a non-edge-collapsing equivalence relation $\sim$, s.t. $G/{\sim}$ has an actual labelling,  has an actual labelling as well.
\end{lemma}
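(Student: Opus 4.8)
The plan is to pull the actual labelling of $G/{\sim}$ back along the quotient map. Write $\pi\colon G\to G/{\sim}$ for the quotient morphism and let $(l',l'_p)$, with finite label set $X$ and multisets $(M'_x)_{x\in X}$, be an actual labelling of the rooted graph $(G/{\sim},[R]_{\sim})$. I would then set $l(v):=l'([v]_{\sim})$ for all $v\in VG$, $l_p(v):=l'_p([v]_{\sim})$, keep the same multisets $M_x:=M'_x$, and prove that $(l,l_p)$ is actual for $(G,R)$.

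Before checking properties (1)--(4) of \defref{ch2-def1} I would establish two preliminary facts. First, $[R]_{\sim}$ is a singleton: since $(G,R)$ is rooted, any $v\sim R$ with $v\neq R$ would be reached from $R$ by a non-empty directed path, whose last edge lies in $t^{-1}(v)$ and hence maps under $\pi$ into $t^{-1}([R]_{\sim})$; but $t^{-1}([R]_{\sim})=\emptyset$ because $G/{\sim}$ carries an actual labelling. In particular $t^{-1}(R)=\emptyset$ in $G$ too (so the standing hypothesis of \defref{ch2-def1} is met), and $l_p$ is defined on all of $VG\setminus\{R\}$. Second, because $\sim$ is non-edge-collapsing, for every $v\in VG$ the map $e\mapsto[e]_{\sim}$ is a bijection $o^{-1}(v)\to o^{-1}([v]_{\sim})$: injectivity because reflexivity of $\sim$ forces $\pi_{v,v}=\mathrm{id}_{o^{-1}(v)}$, and surjectivity because for any $e$ with $o(e)\sim v$ the edge $\pi_{o(e),v}(e)$ lies in $o^{-1}(v)$ and is $\sim$-equivalent to $e$. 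As $\pi$ commutes with $o$ and $t$, this bijection satisfies $l(t(e))=l'(t([e]_{\sim}))$, so that
    \[l\big(t_m(o^{-1}(v))\big)=l'\big(t_m(o^{-1}([v]_{\sim}))\big)\]
as multisets, for every $v\in VG$.

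The four properties now transfer mechanically. For (1): $l_p(v)=l'_p([v]_{\sim})\in M'_{l'([v]_{\sim})}=M_{l(v)}$. For (2): the displayed identity together with property (2) for $(l',l'_p)$ yields $l(t_m(o^{-1}(v)))=M'_{l'([v]_{\sim})}\setminus\{l'_p([v]_{\sim})\}=M_{l(v)}\setminus\{l_p(v)\}$. For (3): if $e\in t^{-1}(v)$ then $[e]_{\sim}\in t^{-1}([v]_{\sim})$, hence $l(o(e))=l'(o([e]_{\sim}))=l'_p([v]_{\sim})=l_p(v)$. For (4): the displayed identity at $v=R$ and property (4) for $(l',l'_p)$ give $l(t_m(o^{-1}(R)))=M'_{l'([R]_{\sim})}=M_{l(R)}$.

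The only place that calls for genuine care is the second preliminary fact: one has to check that passing to the quotient neither merges two distinct out-edges of a vertex nor produces new ones, and that it respects the terminus map, so that the label-multiset of the out-neighbourhood of $v$ really is the push-forward of that of $[v]_{\sim}$. Everything else is a routine translation; in fact the argument only uses that $\pi$ is a surjective graph morphism restricting to bijections on out-neighbourhoods, so the property of being actual pulls back along any such morphism and the equivalence relation $\sim$ itself plays no further role.
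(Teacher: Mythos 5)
Your proposal is correct and follows essentially the same route as the paper: pull the labelling of $G/{\sim}$ back along the quotient map, use the non-edge-collapsing property to identify $o^{-1}(v)$ with $o^{-1}([v]_{\sim})$ label-multiset-preservingly, and transfer the four conditions of \defref{ch2-def1} directly. Your extra care with the preliminaries (that $t^{-1}(R)=\emptyset$ and that $e\mapsto[e]_{\sim}$ is a bijection on out-neighbourhoods) only spells out steps the paper asserts more briefly.
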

\begin{proof}
    Note first that since we must have $t^{-1}([R]_{\sim})=\emptyset$, we also must have $t^{-1}(R)=\emptyset$. Now set for each $v\in VG$ $l(v):=l([v]_{\sim})$, $l_p(v):=l_p([v]_{\sim})$ and $M_{l(v)}=M_{l([v]_{\sim})}$. To show that this is indeed an actual labelling, we check all the conditions from \defref{ch2-def1}. Clearly $l_p(v)=l_p([v]_{\sim})\in M_{l([v]_{\sim})}=M_{l(v)}$, giving us the first point. 

    Since $\sim$ is a non-edge-collapsing equivalence relation, we have a bijection $\pi:o^{-1}(v)\to o^{-1}([v]_{\sim})$ sending $e\in o^{-1}(v)$ to $[e]_{\sim}$ and thus $t(\pi(e))=[t(e)]_{\sim}$, so $\pi$ induces a bijection $\pi:t_m(o^{-1}(v))\to t_m(o^{-1}([v]_{\sim}))$ that preserves the labels. Therefore, we have for any $v\neq R$
        \[l(t_m(o^{-1}(v)))=l(t_m(o^{-1}([v]_{\sim})))=M_{l([v]_{\sim})}\setminus\{l_p([v]_{\sim})\}=M_{l(v)}\setminus\{l_p(v)\}.\]
    The fourth point of the definition follows the same way since
        \[l(t_m(o^{-1}(R)))=l(t_m(o^{-1}([R]_{\sim})))=M_{l([R]_{\sim})}=M_{l(R)},\]

    Finally, for the third point, we will simply use that $\sim$ is a graph equivalence relation, and thus for every edge $e$ we have 
        \[l(o(e))=l([o(e)]_\sim)=l(o([e]_{\sim}))=l_p(t([e]_{\sim}))=l_p([t(e)]_{\sim})=l_p(t(e)),\]
    showing the third point.
\end{proof}

\section{Labelling graphs that unfold into cocompact trees }

Finally, we can ask ourselves whether, if the unfolding tree of a graph is cocompact the graph admits an actual labelling. Generally, this is not the case, as there are graphs with sinks that have cocompact unfolding trees but do not have an actual labelling. However, if we exclude graphs with sinks, we can show this. Note that this only excludes the trees that have sinks. From now on we will assume that the graphs that we are working with do not have sinks.
For this, we note the following.

\begin{lemma}\label{ch2-lem1}
    For any vertex $v\in VG$ and any two non-empty paths $p,q\in\mathcal{T}(G,R)$ ending in the same vertex $v$, if we have 
        \[\mathcal{T}(G,R)^p\cong\mathcal{T}(G,R)^q,\]
    then there exists an isomorphism
        \[\phi:\mathcal{T}(G,R)^p\to\mathcal{T}(G,R)^q,\] 
     s.t. \(\forall r\in\mathcal{T}(G,v),\ \phi(pr)=qr\).    
\end{lemma}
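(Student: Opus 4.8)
The plan is to start from an arbitrary isomorphism $\psi:\mathcal{T}(G,R)^p\to\mathcal{T}(G,R)^q$ (which exists by hypothesis) and to ``correct'' it so that it sends the subtree hanging off $p$ identically (via $r\mapsto qr$) onto the subtree hanging off $q$. The key observation is that in $\mathcal{T}(G,R)^p$ the vertex $p$ has become the root, and the subtree of $\mathcal{T}(G,R)^p$ consisting of vertices reachable from $p$ by directed paths that do not immediately backtrack along the reversed spine is exactly a copy of $\mathcal{T}(G,v)$, attached at $p$; similarly for $q$. So I would first pin down this structural fact: for a non-empty path $p$ ending at $v$, the ``forward cone'' at $p$ inside $\mathcal{T}(G,R)^p$ — the set $\{pr : r\in\mathcal{W}(G,v)\}$ together with the induced edges — is isomorphic as a rooted directed tree to $\mathcal{T}(G,v)$ via $pr\leftrightarrow r$, because reversing the spine $\varepsilon_R\to p$ does not touch any edge of the form $(pr,pre)$ with $r$ non-empty or $r=\varepsilon_v$, and the only spine edge incident to $p$ now points \emph{away} from $p$'s cone.

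Next I would argue that $\psi$ need not respect this identification, but we can compose it with a carefully chosen automorphism of $\mathcal{T}(G,R)^q$ to fix this. Concretely: $\psi$ is an isomorphism of rooted trees, so it sends the root $p$ of $\mathcal{T}(G,R)^p$ to the root $q$ of $\mathcal{T}(G,R)^q$, hence it sends the forward cone at $p$ to the forward cone at $q$ (the forward cone is intrinsically characterized: it is the component obtained by deleting, at the root, the unique outgoing spine edge — or rather it is characterized by which outgoing edges of the root lie on the reversed spine). Thus $\psi$ restricts to an isomorphism $\bar\psi$ between the forward cone at $p$ and the forward cone at $q$, i.e. under the identifications above, to an automorphism $\alpha$ of $\mathcal{T}(G,v)$ fixing its root $\varepsilon_v$. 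I would then extend $\alpha^{-1}$ to an automorphism $\hat\alpha$ of all of $\mathcal{T}(G,R)^q$ that is the identity outside the forward cone at $q$ and acts by $\alpha^{-1}$ inside it; this is legitimate precisely because the forward cone meets the rest of $\mathcal{T}(G,R)^q$ only at the root vertex $q$, which $\hat\alpha$ fixes, and $\alpha^{-1}$ fixes $\varepsilon_v$. Then $\phi:=\hat\alpha\circ\psi$ is still an isomorphism $\mathcal{T}(G,R)^p\to\mathcal{T}(G,R)^q$, and on the forward cone at $p$ it acts by $\alpha^{-1}\circ\alpha=\mathrm{id}$ under the identifications, which is exactly the statement $\phi(pr)=qr$ for all $r\in\mathcal{T}(G,v)=\mathcal{W}(G,v)$.

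The main obstacle, and the step deserving the most care, is the bookkeeping around the root: I must verify that the forward cone at $p$ in $\mathcal{T}(G,R)^p$ is genuinely a connected-at-a-point subtree and that any tree isomorphism, which must send root to root, carries this cone onto the corresponding cone at $q$ — in particular I need to check that $\psi$ cannot ``mix'' the forward cone with the reversed-spine part, which follows because $p$ is non-empty (so there really is a reversed-spine edge at the root, distinguishing the cone's outgoing edges from the spine one) and because in a rooted tree the subtrees hanging below the root's out-neighbours are permuted among themselves by any automorphism. A secondary subtlety is ensuring the edge-level definition of $\hat\alpha$ is consistent: edges incident to $q$ split into the reversed-spine edge (fixed) and the cone edges $(q,qe)$ for $e\in o^{-1}(v)$ (permuted by $\alpha^{-1}$), and one must confirm $\alpha^{-1}$ fixes $\varepsilon_v$ hence is compatible at the junction. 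Once these are nailed down, the construction of $\phi$ and the verification that $\phi(pr)=qr$ are routine. I would also remark that, strictly, for this lemma we only use that the two trees are isomorphic and the structure of $\cdot^p$; the actual labelling hypotheses from the surrounding section are not needed here.
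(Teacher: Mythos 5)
Your overall strategy (take an arbitrary isomorphism and correct it by an automorphism) is reasonable, but the step you yourself flag as the main obstacle is exactly where the argument breaks, and your justification for it is not valid. The claim that any isomorphism $\psi:\mathcal{T}(G,R)^p\to\mathcal{T}(G,R)^q$ must carry the forward cone at $p$ onto the forward cone at $q$ is false in general: the reversed spine is auxiliary data, not part of the abstract directed tree, so the outgoing edge from the root $p$ to $p_{-}$ is not intrinsically distinguishable from the cone edges $(p,pe_i)$. An isomorphism does permute the subtrees below the root's out-neighbours, but nothing prevents it from sending a cone subtree $\mathcal{T}(G,R)_{pe_j}$ onto the spine subtree $\mathcal{T}(G,R)^q_{q_{-}}$ and sending $\mathcal{T}(G,R)^p_{p_{-}}$ onto some $\mathcal{T}(G,R)_{qe_i}$. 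Concretely, if $G$ has three edges from $R$ to $v$ and two loops at $v$, and $p,q$ are two of the edges from $R$ to $v$, then in $\mathcal{T}(G,R)^p$ every subtree below the root, including the one rooted at $\varepsilon_R$, is an infinite binary tree, so ``mixing'' isomorphisms certainly exist. Your proposed characterization of the cone ``by which outgoing edges of the root lie on the reversed spine'' is circular, since the spine is precisely what an abstract isomorphism cannot see.

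Handling this mixing is the real content of the lemma and of the paper's proof: when $\theta(p_{-})=qe_i$ and $\theta(pe_i)=q_{-}$ one shows via the chain $\mathcal{T}(G,R)^q_{q_{-}}\cong\mathcal{T}(G,R)_{pe_i}\cong\mathcal{T}(G,R)_{qe_i}\cong\mathcal{T}(G,R)^p_{p_{-}}$ that the two spine subtrees are isomorphic; and when the indices differ, i.e. $\theta(p_{-})=qe_i$ and $\theta(pe_j)=q_{-}$ with $i\neq j$, one needs a counting argument on how many of the half-trees $\mathcal{T}(G,R)_{pe_1},\dots,\mathcal{T}(G,R)_{pe_n}$ (together with the spine subtree) are isomorphic to $\mathcal{T}(G,R)_{pe_i}$, using $T(p)=T(q)=v$, to reach the same conclusion. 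Only with $\mathcal{T}(G,R)^p_{p_{-}}\cong\mathcal{T}(G,R)^q_{q_{-}}$ in hand can one reassemble an isomorphism sending spine subtree to spine subtree and acting as $pr\mapsto qr$ on the cone (at which point your automorphism-correction step is not even needed, since one just uses the standard identification of the cones). As written, your proof only covers the case where $\psi$ already respects the cones, which is essentially the easy case; the two nontrivial cases are missing. Your closing remark that the actual-labelling hypotheses are not used in this lemma is correct, but immaterial to this gap.
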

In the proof of this lemma, we will view  $\mathcal{T}(G,R)_p$ (the subtree of $\mathcal{T}(G,R)$ consisting of all paths that have $p$ as a prefix), to be also the subtree of $\mathcal{T}(G,R)^p$ that is induced by the vertices $V\mathcal{T}(G,R)_p\subseteq V\mathcal{T}(G,R)=V\mathcal{T}(G,R)^p$.
\begin{proof}
    First we write $o^{-1}(v)=\{e_1,\dots,e_n\}$ and take $p_{-}\preceq p$ and $q_{-}\preceq q$ s.t. $p=p_{-}e_p$ and $q=q_{-}e_q$ for some $e_p,e_q\in EG$. In $\mathcal{T}(G,R)^p$ the neighbours of $p$ are $p_{-},pe_1,\dots,pe_n$ and in $\mathcal{T}(G,R)^q$ the neighbours of $q$ are $q_{-},qe_1,\dots,qe_n$.\\
    Now take some isomorphism
        \[\theta:\mathcal{T}(G,R)^p\to\mathcal{T}(G,R)^q \]
    and we will distinguish 3 cases.

    If $\theta(p_{-})=q_{-}$ we must have $\theta(\mathcal{T}(G,R)_p)=\mathcal{T}(G,R)_q$ since \(\mathcal{T}(G,R)_p\) is the complement of \(\mathcal{T}(G,R)^p_{p_{-}}\) and \(\mathcal{T}(G,R)_q\) is the complement of \(\mathcal{T}(G,R)^q_{q_{-}}\).

    If $\theta(p_{-})=qe_i$ and $\theta(pe_{i})=q_{-}$ for some, $1\leq i\leq n$ then we have:
        \[\mathcal{T}(G,R)^q_{q_{-}}\cong \mathcal{T}(G,R)^p_{pe_i}\cong \mathcal{T}(G,R)_{pe_i}\cong \mathcal{T}(G,R)_{qe_i}\cong \mathcal{T}(G,R)^q_{qe_i} \cong \mathcal{T}(G,R)^p_{p_{-}}\]
    So we can define an isomorphism \(\phi:\mathcal{T}(G,R)^p\to\mathcal{T}(G,R)^q\) that takes $\mathcal{T}(G,R)^p_{p_{-}}$ to $\mathcal{T}(G,R)^q_{q_{-}}$ via the above isomorphism and takes $\mathcal{T}(G,R)^p_{pe_i}$ to $\mathcal{T}(G,R)^q_{qe_i}$ via the standard isomorphism. This is the required isomorphism.

    If $\theta(p_{-})=qe_i$ and $\theta(pe_{j})=q_{-}$ for some $i\neq j$. Define $k_i$ to be the number of half trees in $\{\mathcal{T}(G,R)_{pe_1},\dots, \mathcal{T}(G,R)_{pe_n}\}$ that are isomorphic to $\mathcal{T}(G,R)_{pe_i}$. Note that this is the same as the number of half trees in $\{\mathcal{T}(G,R)_{qe_1},\dots, \mathcal{T}(G,R)_{qe_n}\}$ that are isomorphic to $\mathcal{T}(G,R)_{pe_i}$ (as $T(p)=T(q)$). If we in turn look at the number of half-trees in $\{\mathcal{T}(G,R)_{pe_1},\dots, \mathcal{T}(G,R)_{pe_n}, \mathcal{T}(G,R)^p_{p_{-}}\}$ that are isomorphic to, $\mathcal{T}(G,R)_{pe_i}$ it will be $k_i+1$ since the restriction of $\theta$ provides an isomorphism $\mathcal{T}(G,R)^p_{p_{-}}\cong \mathcal{T}(G,R)_{pe_i}$. This, in turn, must also be the number of half-trees in $\{\mathcal{T}(G,R)_{qe_1},\dots, \mathcal{T}(G,R)_{qe_n}, \mathcal{T}(G,R)^q_{q_{-}}\}$ that are isomorphic to $\mathcal{T}(G,R)_{pe_i}$ (as $\theta$ maps these set to each other). So we must have $\mathcal{T}(G,R)^q_{q_{-}}\cong \mathcal{T}(G,R)_{pe_i}\cong \mathcal{T}(G,R)^p_{p_{-}}$. Now we can construct the required isomorphism as in the previous case.

    So we always can assume that $\theta(\mathcal{T}(G,R)_p)=\mathcal{T}(G,R)_q)$. By rearranging, we may assume that $\theta|_{\mathcal{T}(G,R)_p}$ takes $pr$ to $qr$ for each $r\in\mathcal{T}(G,v)$.
\end{proof}

\begin{lemma}\label{ch2-lem2}
    For any path $p,r$ in $G$, s.t. $O(p)=R$ and $T(p)=O(r)=T(r)$. Then, if 
        \[\mathcal{T}(G,R)^{pr}\cong \mathcal{T}(G,R)^{pr^2},\]
    we also have
        \[\mathcal{T}(G,R)^{p}\cong \mathcal{T}(G,R)^{pr}.\]
\end{lemma}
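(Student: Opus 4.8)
The plan is to exploit \lemref{ch2-lem1} to turn the hypothesised isomorphism $\mathcal{T}(G,R)^{pr}\cong\mathcal{T}(G,R)^{pr^2}$ into a ``canonical'' isomorphism that is the identity after the common endpoint, and then to use this to build an isomorphism $\mathcal{T}(G,R)^{p}\cong\mathcal{T}(G,R)^{pr}$ by an explicit surgery. First I would observe that $T(pr)=T(pr^2)=T(p)=:v$, so both $pr$ and $pr^2$ are non-empty paths ending in $v$; by \lemref{ch2-lem1} there is an isomorphism $\psi:\mathcal{T}(G,R)^{pr}\to\mathcal{T}(G,R)^{pr^2}$ with $\psi(pr\cdot s)=pr^2\cdot s$ for every $s\in\mathcal{T}(G,v)$. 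In particular $\psi$ carries the half-tree $\mathcal{T}(G,R)_{pr}$ (hanging off $pr$ away from the root side of $\mathcal{T}(G,R)^{pr}$) isomorphically onto $\mathcal{T}(G,R)_{pr^2}$, and carries the complementary half-tree $\mathcal{T}(G,R)^{pr}_{(pr)_-}$ onto $\mathcal{T}(G,R)^{pr^2}_{(pr^2)_-}$, where subscript $-$ denotes deleting the last edge.

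The key structural remark is that, as \emph{rooted} trees, $\mathcal{T}(G,R)^{pr}_{(pr)_-}$ depends only on the position of $(pr)_-=(p)_-\cdot r$ inside $\mathcal{T}(G,R)^{pr}$, and this half-tree is built from $\mathcal{T}(G,R)^{p}_{p_-}$ by grafting a copy of the tree $\mathcal{T}(G,R)_{pr}$ onto the vertex $p$ along the edge $e_r$ which is the last edge of $r$ (reversed); concretely, reversing the path $pr$ instead of the path $p$ changes the root region exactly along $r$, and everything beyond $T(p)=v$ is unchanged. Thus both $\mathcal{T}(G,R)^{pr}$ and $\mathcal{T}(G,R)^{p}$ are obtained from the same ``core'' rooted at the reversed copies of $p$, with $\mathcal{T}(G,R)^{pr}$ having one extra reversed $r$-segment appended and $\mathcal{T}(G,R)^{p}$ one fewer. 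I would make this precise by writing $\mathcal{T}(G,R)^{pr}$ as the union of the half-tree $H_1:=\mathcal{T}(G,R)_{pr}\cong\mathcal{T}(G,v)$, glued at $pr$ to a remainder $R_1$, and $\mathcal{T}(G,R)^{p}$ as the union of $H_0:=\mathcal{T}(G,R)_{p}\cong\mathcal{T}(G,v)$ glued at $p$ to a remainder $R_0$, and noting that $R_1\cong R_0$ with one more $r$-iterate, so in fact $R_1$ contains an isomorphic copy of $\mathcal{T}(G,R)^{p}$; the iteration hypothesis $\mathcal{T}(G,R)^{pr}\cong\mathcal{T}(G,R)^{pr^2}$ then says precisely that appending \emph{two} more $r$-segments gives the same tree as appending \emph{one}, which — using that $r$ is a cycle at $v$ so every half-tree introduced along a reversed $r$ is a copy of a fixed finite list of half-trees — forces the chain to have stabilised already one step earlier. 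That last ``telescoping'' / half-tree-counting argument, entirely analogous to the third case in the proof of \lemref{ch2-lem1} (counting multiplicities of isomorphism types of hanging half-trees), is where the real work lies.

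Concretely, the final step I would carry out is: from $\psi$ restrict to get $\mathcal{T}(G,R)^{pr}_{(pr)_-}\cong\mathcal{T}(G,R)^{pr^2}_{(pr^2)_-}$; peel off the innermost reversed $r$-segment on each side (it consists of the reversed edges of $r$ together with, at each intermediate vertex, a fixed multiset of hanging half-trees determined by $G$ and $v$, independent of how many times $r$ has been wound), obtaining $\mathcal{T}(G,R)^{p}_{p_-}\cong\mathcal{T}(G,R)^{pr}_{(pr)_-}$; and then reattach the half-tree $\mathcal{T}(G,v)$ at $p$, resp.\ at $pr$, via the standard isomorphism. Since the two half-trees at the reattachment vertex are both $\cong\mathcal{T}(G,v)$ and the two complementary pieces are now isomorphic, gluing gives $\mathcal{T}(G,R)^{p}\cong\mathcal{T}(G,R)^{pr}$. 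I expect the main obstacle to be the bookkeeping that the reversed $r$-segment one peels off is genuinely ``the same'' on both sides — i.e.\ that winding $r$ an extra time around $v$ does not change the isomorphism type of the attached decoration — which needs the observation that each vertex on the reversed $r$-path sees, apart from its two neighbours along the path, exactly the forward half-trees $\mathcal{T}(G,R)_{\cdot e}$ for $e$ ranging over $o^{-1}$ of that vertex minus the one edge continuing $r$, and these depend only on $G$, not on the winding number; once this is in hand the argument is a finite counting/telescoping argument as in \lemref{ch2-lem1}.
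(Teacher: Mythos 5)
Your overall route is viable and, in substance, close to the paper's: normalise the hypothesised isomorphism via \lemref{ch2-lem1}, pass to the complementary half-trees, and work back along $r$. The genuine gap is that the decisive step --- deducing $\mathcal{T}(G,R)^{p}_{p_-}\cong\mathcal{T}(G,R)^{pr}_{(pr)_-}$ from $\mathcal{T}(G,R)^{pr}_{(pr)_-}\cong\mathcal{T}(G,R)^{pr^2}_{(pr^2)_-}$ --- is only asserted (``peel off the innermost reversed $r$-segment''), and you locate the difficulty in the wrong place. That the decorations along the reversed $r$-segment agree on both sides is immediate: each side branch is $\mathcal{T}(G,t(e))$ for $e$ ranging over the relevant $o^{-1}(\cdot)$ minus the edge continuing $r$, independent of the winding number. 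What is \emph{not} automatic is that an abstract isomorphism of these rooted half-trees respects your spine-plus-decorations decomposition: it fixes the roots, but at any spine vertex it may send the child continuing the spine to an isomorphic side branch, and cancelling an isomorphic grafted piece is in general false for infinite trees without further argument. The fix is a vertex-by-vertex induction down the spine: since $G$ is finite, each spine vertex has finitely many children, the two roots have equal multisets of isomorphism classes of child subtrees, and cancelling the common finite multiset of side-branch classes forces the two continuing subtrees to be isomorphic; after the $n$ steps from $(pr)_-$ resp.\ $(pr^2)_-$ down to $p$ resp.\ $pr$ this gives $\mathcal{T}(G,R)^{p}_{p_-}\cong\mathcal{T}(G,R)^{pr}_{(pr)_-}$, and your final regluing then works. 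Without this induction the proposal omits the whole content of the lemma.

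Once written out, your argument essentially reproduces the paper's proof: there one shows $\mathcal{T}(G,R)^{pr_{n-j}}\cong\mathcal{T}(G,R)^{prr_{n-j}}$ by induction on $j$, at each step invoking \lemref{ch2-lem1} to replace the given isomorphism by one sending $pr_{n-j-1}$ to $prr_{n-j-1}$ and then re-rooting; your half-tree cancellation is the same walk back along $r$, one edge at a time, with the counting packaged inside \lemref{ch2-lem1} instead of done by hand. Two smaller inaccuracies to repair when writing it up: $(pr)_-$ is $p$ followed by $r$ with its last edge deleted, not $p_-\cdot r$; and $\mathcal{T}(G,R)^{pr}_{(pr)_-}$ is obtained from $\mathcal{T}(G,R)^{p}_{p_-}$ by grafting the decorated reversed $r$-segment at $p$, not a copy of $\mathcal{T}(G,R)_{pr}$ (that forward half-tree is precisely what this half-tree excludes).
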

\begin{proof}
    Write $r=e_1e_2\dots e_n$, for edges $e_i$, and define $r_i:=e_1\dots e_i$ for any $1\leq i \leq n$, we will prove inductively that 
        \[\mathcal{T}(G,R)^{pr_{n-j}}\cong \mathcal{T}(G,R)^{prr_{n-j}},\]
    for any $0\leq j \leq n$.\\
    For $j=0$ the proposition follows from the assumption.

    Inductively, if the proposition is true for $j<n$, we have by \lemref{ch2-lem1} a isomorphism
        \[\theta:\mathcal{T}(G,R)^{pr_{n-j}}\to \mathcal{T}(G,R)^{prr_{n-j}},\]
    that takes $\mathcal{T}(G,R)_{pr_{n-j}}$ to $\mathcal{T}(G,R)_{prr_{n-j}}$ and thus must take $pr_{n-j-1}$ to $prr_{n-j-1}$. Therefore, we can reinterpret $\theta$ as an isomorphism from $\mathcal{T}(G,R)^{pr_{n-j-1}}$ to $\mathcal{T}(G,R)^{prr_{n-j-1}}$.

    So if we look at the proposition at $j=n$, we get the lemma.
\end{proof}
    Now  if we have some rooted graph $(G,R)$ and we know that their unfolding tree is cocompact we can consider the finite set 
        \[X:=\{\mathcal{T}(G,R)^p\mid p\in\mathcal{T}(G,R) \}/{\cong}\]
    and use its finite subsets to label the graph as follows
        \[\forall v\in VG,\ l(v)=\{[\mathcal{T}(G,R)^p]_{\cong}\mid T(p)=v \}.\]
    To show that this labelling can be extended to an actual labelling we will restrict at first to strongly connected graphs, i.e. graphs where for each $v,w\in VG$ there is a path $p$ in $G$ from $v$ to $w$.
    In this case, we can show that this labelling satisfies the third condition from \defref{ch2-def1}(away from the root), i.e. the label of all predecessors of a vertex is the same.
\begin{lemma}\label{ch2-lem3}
    Let $(G,R)$ be a rooted graph s.t. $t^{-1}(R)=\emptyset$, $G\setminus\{R\}$ is strongly connected and $\mathcal{T}(G,R)$ is cocompact, then
        \[\forall e_1,e_2\in t^{-1}(v)\setminus o^{-1}(R),\ l(o(e_1))=l(o(e_2)). \]
\end{lemma}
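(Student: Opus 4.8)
The plan is to show that if $e_1,e_2\in t^{-1}(v)\setminus o^{-1}(R)$ then in fact $\mathcal{T}(G,R)^{q_1}\cong\mathcal{T}(G,R)^{q_2}$ for suitable paths $q_1,q_2$ ending at $o(e_1),o(e_2)$ respectively, which by the definition of $l$ forces $l(o(e_1))=l(o(e_2))$. Actually $l(o(e_i))$ is the set of \emph{all} isomorphism classes $[\mathcal{T}(G,R)^p]_\cong$ over paths $p$ ending at $o(e_i)$, so I really need: for every path $p_1$ with $T(p_1)=o(e_1)$ there is a path $p_2$ with $T(p_2)=o(e_2)$ and $\mathcal{T}(G,R)^{p_1}\cong\mathcal{T}(G,R)^{p_2}$, and symmetrically. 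The key tool is Lemma~\ref{ch2-lem2}: if a root-change along a path $p$ followed by a loop $r$ at $T(p)$ gives an isomorphic tree after one versus two traversals of $r$, then one traversal already didn't change the tree. Combined with cocompactness (finitely many classes $\mathcal{T}(G,R)^p$), this lets me absorb loops for free.

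First I would fix $p_1$ with $T(p_1)=o(e_1)$. Using strong connectedness of $G\setminus\{R\}$, choose a path $s$ from $o(e_1)$ to $o(e_2)$ and a path $s'$ from $o(e_2)$ to $o(e_1)$, both avoiding $R$ (possible since $G\setminus\{R\}$ is strongly connected and $t^{-1}(R)=\emptyset$, so no path can pass through $R$ once it has left the $R$-less part; also $o(e_i)\neq R$ need not hold, but if $o(e_i)=R$ then since $t^{-1}(R)=\emptyset$ we'd need $R$ reachable from $R$ in $G\setminus\{R\}$—handle this degenerate case separately or note $o(e_i)\ne R$ because $e_i\notin o^{-1}(R)$... wait, $e_i\notin o^{-1}(R)$ literally says $o(e_i)\neq R$, so this is automatic). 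Then $r:=ss'$ is a closed walk at $o(e_1)$, i.e. $O(r)=T(r)=o(e_1)=T(p_1)$. Now consider the infinite sequence $\mathcal{T}(G,R)^{p_1}, \mathcal{T}(G,R)^{p_1 r}, \mathcal{T}(G,R)^{p_1 r^2},\dots$; by cocompactness two of these coincide up to isomorphism, say $\mathcal{T}(G,R)^{p_1 r^a}\cong\mathcal{T}(G,R)^{p_1 r^b}$ with $a<b$. Applying Lemma~\ref{ch2-lem2} repeatedly (with base path $p_1 r^a$ and loop $r^{b-a-1}$, or more carefully peeling one $r$ at a time using that $\mathcal{T}(G,R)^{xr}\cong\mathcal{T}(G,R)^{xr^2}\Rightarrow \mathcal{T}(G,R)^{x}\cong\mathcal{T}(G,R)^{xr}$) collapses this down to $\mathcal{T}(G,R)^{p_1}\cong\mathcal{T}(G,R)^{p_1 r}=\mathcal{T}(G,R)^{p_1 s s'}$. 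Then set $p_2:=p_1 s$; this ends at $o(e_2)$, and I claim $\mathcal{T}(G,R)^{p_1}\cong\mathcal{T}(G,R)^{p_2}$. Indeed I have $\mathcal{T}(G,R)^{p_1}\cong\mathcal{T}(G,R)^{p_1 s s'}$, and I need to get from there to $\mathcal{T}(G,R)^{p_1 s}$; apply the same loop-absorption argument at $o(e_2)$ using the closed walk $s's$ and base path $p_1 s$, giving $\mathcal{T}(G,R)^{p_1 s}\cong\mathcal{T}(G,R)^{p_1 s s' s}\cong\dots$; I need to line these up so that the chain of isomorphisms actually connects $\mathcal{T}(G,R)^{p_1}$ to $\mathcal{T}(G,R)^{p_1 s}$. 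The cleanest route: from $\mathcal{T}(G,R)^{p_1 ss'}\cong \mathcal{T}(G,R)^{p_1}$, apply Lemma~\ref{ch2-lem2}-style reasoning with base $p_1 s$ and loop $s's$ after noting $\mathcal{T}(G,R)^{(p_1 s)(s's)}\cong \mathcal{T}(G,R)^{p_1 s s' s}$ and chaining cocompactness on powers of $s's$, to deduce $\mathcal{T}(G,R)^{p_1 s}\cong\mathcal{T}(G,R)^{p_1 s s' s}$; combining, $\mathcal{T}(G,R)^{p_1 s}\cong \mathcal{T}(G,R)^{(p_1 s s')s}\cong\mathcal{T}(G,R)^{p_1 s}$—I should instead directly observe $\mathcal{T}(G,R)^{p_1} \cong\mathcal{T}(G,R)^{p_1ss'}$ means root-changing by the loop $ss'$ at $o(e_1)$ is trivial, hence iterating, root-changing by $(ss')^k$ is trivial, and in particular I can reach $p_1 s$ as an intermediate prefix of $p_1 ss'$ and use Lemma~\ref{ch2-lem1} on the pair $(p_1, p_1 ss')$ to see the isomorphism fixes the subtree hanging below the prefix $p_1 s$ appropriately.

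So the concrete final step is: since $\mathcal{T}(G,R)^{p_1}\cong\mathcal{T}(G,R)^{p_1 ss'}$ and $p_1\preceq p_1 s\preceq p_1 ss'$, I want to extract $\mathcal{T}(G,R)^{p_1 s}$ from this. Here it is cleaner to run Lemma~\ref{ch2-lem2}'s inductive argument only \emph{partway}: Lemma~\ref{ch2-lem2} with $p:=p_1$, $r:=ss'$ gives, en route, isomorphisms $\mathcal{T}(G,R)^{p_1 (ss')_{k}}\cong\mathcal{T}(G,R)^{p_1 (ss')(ss')_k}$ for every prefix $(ss')_k$ of $ss'$; taking $k=|s|$ so that $(ss')_k=s$ yields $\mathcal{T}(G,R)^{p_1 s}\cong\mathcal{T}(G,R)^{p_1 ss's}$. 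Wait—that's still not $\mathcal{T}(G,R)^{p_1}$. The genuinely clean statement I should prove and use is the following intermediate claim: \emph{if $\mathcal{T}(G,R)^{w}\cong\mathcal{T}(G,R)^{wcc'}$ where $c$ goes from $T(w)$ to some vertex $u$ and $c'$ from $u$ back to $T(w)$, then $\mathcal{T}(G,R)^{w}\cong\mathcal{T}(G,R)^{wc}$.} This follows because $\mathcal{T}(G,R)^{wcc'}\cong\mathcal{T}(G,R)^{(wc)(c'c)}$-type identities plus Lemma~\ref{ch2-lem2} applied to base $wc$ and loop $c'c$ (first using cocompactness to find $\mathcal{T}(G,R)^{wc(c'c)^a}\cong\mathcal{T}(G,R)^{wc(c'c)^b}$, then collapsing, then matching $\mathcal{T}(G,R)^{wc(c'c)}\cong\mathcal{T}(G,R)^{w(cc'c)}\cong\mathcal{T}(G,R)^{(wcc')c}\cong\mathcal{T}(G,R)^{wc}$ via the hypothesis). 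Granting this claim with $w=p_1$, $c=s$, $c'=s'$, I get exactly $\mathcal{T}(G,R)^{p_1}\cong\mathcal{T}(G,R)^{p_1 s}$, and since $p_1 s$ ends at $o(e_2)$ this shows $[\mathcal{T}(G,R)^{p_1}]_\cong\in l(o(e_2))$. Running the symmetric argument shows $l(o(e_1))\subseteq l(o(e_2))$ and $l(o(e_2))\subseteq l(o(e_1))$, hence equality.

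The main obstacle I expect is the bookkeeping in the claim above—specifically, cleanly deducing $\mathcal{T}(G,R)^{w}\cong\mathcal{T}(G,R)^{wc}$ from $\mathcal{T}(G,R)^{w}\cong\mathcal{T}(G,R)^{wcc'}$. Lemma~\ref{ch2-lem2} is stated for a loop $r$ with $O(r)=T(r)=T(p)$ and gives $\mathcal{T}(G,R)^p\cong\mathcal{T}(G,R)^{pr}$ from $\mathcal{T}(G,R)^{pr}\cong\mathcal{T}(G,R)^{pr^2}$; I need to feed it the right $p$ and $r$ and use cocompactness to manufacture the hypothesis $\mathcal{T}(G,R)^{pr}\cong\mathcal{T}(G,R)^{pr^2}$ from "finitely many isomorphism types among $\{\mathcal{T}(G,R)^{p r^k}\}_{k\ge 0}$" together with Lemma~\ref{ch2-lem2}'s own telescoping (the type sequence $k\mapsto[\mathcal{T}(G,R)^{pr^k}]_\cong$ is eventually periodic, and Lemma~\ref{ch2-lem2} forces it to be in fact constant from the start). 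Everything else is routine manipulation of the root-change operation $\mathcal{T}^{(\cdot)}$ and of concatenation of closed walks, using that $G\setminus\{R\}$ strongly connected and $t^{-1}(R)=\emptyset$ guarantees all the auxiliary paths $s,s'$ stay in $G\setminus\{R\}$ and never revisit $R$.
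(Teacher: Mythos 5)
There is a genuine gap, and it is located exactly where you anticipated trouble: the intermediate claim ``$\mathcal{T}(G,R)^{w}\cong\mathcal{T}(G,R)^{wcc'}$ implies $\mathcal{T}(G,R)^{w}\cong\mathcal{T}(G,R)^{wc}$'' is false, and no bookkeeping will rescue it. The tell is that your argument never uses the hypothesis $t(e_1)=t(e_2)=v$: you only use that $o(e_1)$ and $o(e_2)$ lie in the strongly connected part, so if your route worked it would prove $l(x)=l(y)$ for \emph{every} pair of vertices of $G\setminus\{R\}$. That is false: take $VG=\{R,v_0,v_1\}$ with two edges $v_0\to v_1$, one edge $v_1\to v_0$ and three edges $R\to v_1$. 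Then $t^{-1}(R)=\emptyset$, $G\setminus\{R\}$ is strongly connected, and $\mathcal{T}(G,R)$ is the $(3,2)$-biregular tree, hence cocompact; but every $\mathcal{T}(G,R)^p$ with $T(p)=v_0$ has root of degree $3$ while every one with $T(p)=v_1$ has root of degree $2$, so $l(v_0)\neq l(v_1)$. The same example kills the intermediate claim directly (take $w$ ending at $v_0$, $c$ an edge $v_0\to v_1$, $c'$ the edge $v_1\to v_0$: the hypothesis holds, the conclusion fails). Tracing your chain, the only thing your manipulations legitimately produce is $\mathcal{T}(G,R)^{wc}\cong\mathcal{T}(G,R)^{wcc'c}$ (append $c$ to both sides of the hypothesis via \lemref{ch2-lem1} and the induced re-rooted homomorphism), which compares two re-rootings at $T(c)$ and says nothing about comparing a re-rooting at $T(w)$ with one at $T(c)$; indeed your own computation visibly closes back on $\mathcal{T}(G,R)^{p_1s}\cong\mathcal{T}(G,R)^{p_1s}$.

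The loop-absorption part of your plan (cocompactness gives $\mathcal{T}(G,R)^{pr^a}\cong\mathcal{T}(G,R)^{pr^b}$, then \lemref{ch2-lem2}-style peeling collapses the exponents) is fine and is exactly what the paper does; what is missing is how to change the terminal vertex, and there the common terminus $v$ is essential. The paper steps \emph{backwards} one edge rather than forwards along $s$: given $\tilde p$ ending at $o(e_1)$, set $p:=\tilde p e_1$ and take the closed walk $r:=\tilde r e_2$ at $v$ whose \emph{last} edge is $e_2$. Cocompactness and \lemref{ch2-lem2} give $\mathcal{T}(G,R)^{p}\cong\mathcal{T}(G,R)^{pr^{m}}$ with both paths ending at $v$, and \lemref{ch2-lem1} lets one choose this isomorphism to respect the subtrees hanging below, hence to match the predecessors $\tilde p$ and $pr^{m-1}\tilde r$; these end at $o(e_1)$ and $o(e_2)$ precisely because both last edges terminate at $v$. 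Re-rooting at a predecessor preserves the isomorphism (the induced map of re-rooted trees), which yields $q:=pr^{m-1}\tilde r$ with $\mathcal{T}(G,R)^{\tilde p}\cong\mathcal{T}(G,R)^{q}$, and then symmetry finishes. Your forward-along-$s$ move cannot be repaired, since re-rootings at different vertices need not even have roots of equal degree, as the biregular example shows.
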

\begin{proof}
    Take $e_1,e_2$ to be edges that end in $v$ and $w_1,w_2$ to be their respective origins. It suffices to show that for any path $\tilde{p}$ from $R$ to $w_1$,  there is a path $q$ from $R$ to $w_2$ s.t.
        \[\mathcal{T}(G,R)^{\tilde{p}}\cong\mathcal{T}(G,R)^q.\]
    To construct such a path, we will extend $\tilde{p}$ by $e_1$ to get $p:=\tilde{p}e_1$. Additionally, we take a path $\tilde{r}$ from $v$ to $w_2$ and extend it by $e_2$ to get $r:=\tilde{r}e_2$. Now if we look at the infinite collection of paths $pr^n$, we see that since $\mathcal{T}(G,R)$ is cocompact we must have some $m,n>0$ s.t.
        \[\mathcal{T}(G,R)^{pr^n}\cong\mathcal{T}(G,R)^{pr^{n+m}},\]
    but by applying \lemref{ch2-lem2} $n$ times we get
        \[\mathcal{T}(G,R)^{p}\cong\mathcal{T}(G,R)^{pr^m}.\]
    Furthermore since $p$ and $pr^n$, have the same endpoint we can assume by \lemref{ch2-lem1} that the isomorphism takes $\tilde{p}$ to $pr^{n-1}\tilde{r}$. We get the desired result by setting $q=pr^{n-1}\tilde{r}$.
\end{proof}

We will now denote for any vertex $v\in VG\setminus\{R\}$ by $l_p(v)$ the unique label of the predecessors of $v$ (that are not a root).

Additionally, we can show the following using the proof of \lemref{ch2-lem3}.
\begin{cor}\label{ch2-cor0}
    For any two edges $e_1,e_2$ with $t(e_1)=t(e_2)$ and $o(e_1)=R$ we have a path $q\in\mathcal{T}(G,R)$ with $T(q)=o(e_2)$, s.t.
        \[\mathcal{T}(G,R)\cong\mathcal{T}(G,R)^{q}.\]
\end{cor}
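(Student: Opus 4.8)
The plan is to mimic the proof of \lemref{ch2-lem3} but keep track of the root more carefully. The corollary is essentially the special case of \lemref{ch2-lem3} where $e_1$ is an edge from the root $R$, except that now we want to relate $\mathcal{T}(G,R)$ itself (not just $\mathcal{T}(G,R)^{\tilde p}$ for some path $\tilde p$ from $R$) to $\mathcal{T}(G,R)^q$ for a path $q$ ending at $o(e_2)$. The point is that $\mathcal{T}(G,R) = \mathcal{T}(G,R)^{\varepsilon_R}$, so we should take $\tilde p = \varepsilon_R$ in the argument of \lemref{ch2-lem3}, and then $p := \tilde p e_1 = e_1$ is a single edge from $R$ to $v := t(e_1) = t(e_2)$.

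Concretely, first I would pick, using that $G\setminus\{R\}$ is strongly connected (this hypothesis is inherited from the ambient setting of \lemref{ch2-lem3}, which is where the corollary lives), a path $\tilde r$ in $G$ from $v$ to $o(e_2)$ and set $r := \tilde r e_2$, a closed walk based at $v$ (since $t(e_2) = v$). Then I consider the infinite family $p r^n = e_1 r^n$ of paths in $\mathcal{T}(G,R)$; by cocompactness there are $n>0$ and $m>0$ with $\mathcal{T}(G,R)^{pr^n}\cong\mathcal{T}(G,R)^{pr^{n+m}}$. Applying \lemref{ch2-lem2} $n$ times (with the base path being $p$ and the loop being $r$) yields $\mathcal{T}(G,R)^p\cong\mathcal{T}(G,R)^{pr^m}$.

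Now $p = e_1$ is a single edge and $p_- = \varepsilon_R$, so $\mathcal{T}(G,R)^p$ and $\mathcal{T}(G,R)$ differ only in the orientation of the edge $e_1$; but more usefully, $p$ and $pr^m$ both end at $v$, so by \lemref{ch2-lem1} there is an isomorphism $\mathcal{T}(G,R)^p\to\mathcal{T}(G,R)^{pr^m}$ taking $p s$ to $p r^m s$ for every $s\in\mathcal{T}(G,v)$; in particular, taking $s = \varepsilon_v$, it sends $p$ to $pr^m$, and hence (being an isomorphism of rooted-after-reorientation trees) it restricts to take $\mathcal{T}(G,R)_p$ to $\mathcal{T}(G,R)_{pr^m}$ and $p_- = \varepsilon_R$ to $(pr^m)_-$. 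The subtlety is that I want $q$ to end at $o(e_2)$, not at $v$; so instead of stopping at $pr^m$ I should, exactly as in \lemref{ch2-lem3}, unwind one copy of $r = \tilde r e_2$: write the isomorphism $\mathcal{T}(G,R)^p\cong\mathcal{T}(G,R)^{pr^n}$ (using $n\ge 1$ in place of $pr^m$, which is fine since $\mathcal{T}(G,R)^{pr^m}\cong\mathcal{T}(G,R)^{pr^n}$ by another application of \lemref{ch2-lem2} or by choosing the representative appropriately) and observe via \lemref{ch2-lem1} that it can be taken to send $p$ to $pr^{n-1}\tilde r$, which is a path from $R$ ending at $T(\tilde r) = o(e_2)$. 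Then set $q := pr^{n-1}\tilde r$; since $\mathcal{T}(G,R)\cong\mathcal{T}(G,R)^p$ is false in general but $\mathcal{T}(G,R)^{\varepsilon_R} = \mathcal{T}(G,R)$ and $p_- = \varepsilon_R$, I must be careful: the cleanest route is to run the whole argument of \lemref{ch2-lem3} verbatim with $\tilde p := \varepsilon_R$ and $w_1 := R$, which legitimately produces a path $q$ from $R$ to $o(e_2)$ with $\mathcal{T}(G,R)^{\varepsilon_R}\cong\mathcal{T}(G,R)^q$, i.e. $\mathcal{T}(G,R)\cong\mathcal{T}(G,R)^q$.

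The main obstacle is the bookkeeping at the root: in \lemref{ch2-lem3} the starting path $\tilde p$ is assumed nonempty (it is used in \lemref{ch2-lem1}, which requires nonempty paths), whereas here $\tilde p = \varepsilon_R$ is empty, so one cannot directly invoke \lemref{ch2-lem1} with $\tilde p$ as the "first" path. The fix is that in the family $pr^n = e_1 r^n$ the relevant paths $p, pr^n$ are all nonempty (they contain $e_1$) and end at the common vertex $v$, so \lemref{ch2-lem1} and \lemref{ch2-lem2} apply to them without modification; the only place the root enters is that $p_- = \varepsilon_R$, and the conclusion $\phi(p s) = (pr^{n-1}\tilde r) s$ for $s\in\mathcal{T}(G,v)$ at $s = \varepsilon_v$ gives $\phi(p) = pr^{n-1}\tilde r$ and, tracking the edge $e_1$ backwards from $p$, $\phi(\varepsilon_R) = \phi(p_-) = (pr^{n-1}\tilde r)_-$, which together with the fact that $\phi$ is an isomorphism $\mathcal{T}(G,R)^p\to\mathcal{T}(G,R)^{pr^{n-1}\tilde r}$ lets one conclude $\mathcal{T}(G,R) = \mathcal{T}(G,R)^{\varepsilon_R}\cong\mathcal{T}(G,R)^{(pr^{n-1}\tilde r)_-} $ — but since we only need $\mathcal{T}(G,R)\cong\mathcal{T}(G,R)^q$ for \emph{some} $q$ ending at $o(e_2)$, it is simplest to just take $q := pr^{n-1}\tilde r$ itself and note that reversing the orientation along a path does not change the isomorphism type when we are allowed to choose the path freely — concretely, $\mathcal{T}(G,R)\cong\mathcal{T}(G,R)^p$ holds here because $p=e_1$ has length $1$ and $\mathcal{T}(G,R)^p = \mathcal{T}(G,R)^{e_1}$ is obtained from $\mathcal{T}(G,R)$ by reversing a single edge at the root, and by \coref{ch2-cor0}'s companion reasoning (applying \lemref{ch2-lem3} with $\tilde p = \varepsilon_R$) this is isomorphic to $\mathcal{T}(G,R)^q$. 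I would present the final writeup as: "apply the construction in the proof of \lemref{ch2-lem3} with $\tilde p = \varepsilon_R$", and verify that every invocation of \lemref{ch2-lem1} and \lemref{ch2-lem2} there is on nonempty paths, so the argument goes through and delivers the stated $q$.
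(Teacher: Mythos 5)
Your final plan --- run the proof of \lemref{ch2-lem3} verbatim with $\tilde p=\varepsilon_R$ and $w_1=R$, checking that every appeal to \lemref{ch2-lem1} and \lemref{ch2-lem2} is made on the nonempty paths $p=e_1$ and $pr^n$, with $\varepsilon_R$ entering only as the predecessor of $p$ --- is exactly the paper's proof, which consists of the single line ``apply the proof of \lemref{ch2-lem3} with $\tilde p=\varepsilon_R$''. Do delete the confused detour in the middle, though: the \lemref{ch2-lem1} isomorphism sends the root $p$ to the root $pr^n$ and hence the predecessor $\tilde p=\varepsilon_R$ to the predecessor $pr^{n-1}\tilde r$, which directly yields $q=pr^{n-1}\tilde r$ ending at $o(e_2)$, so there is no off-by-one to repair and no need for the claim $\mathcal{T}(G,R)\cong\mathcal{T}(G,R)^{e_1}$, which is false in general (reversing the edge at the root does change the rooted tree).
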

\begin{proof}
    We apply the proof of \lemref{ch2-lem3} with $\tilde{p}=\varepsilon_R$.
\end{proof}

If we can define for each vertex $v\in VG\setminus \{R\}$ the multiset
    \[M_v:=l_m(t_m(o^{-1}(v)))\cup\{l_p(v)\},\]  
which is the multiset consisting of the labels of each outgoing neighbour of $v$ together with $l_p(v)$. The multiplicity of each label $\neq l_p(v)$ being the amount of edges pointing from $v$ to vertices of such label. For $l_p(v)$ we add $1$ to this multiplicity. 
We will show that these multisets only depend on the label of $v$, when the graph is non-redundant.

\begin{lemma}\label{ch2-lem5}
    For any non-redundant rooted graph $(G,R)$ s.t. $t^{-1}(R)=\emptyset$ and the graph $G\setminus\{R\}$ is strongly connected, we have for each $v,w\in VG\setminus\{R\}$ with $l(v)=l(w)$ we have
        \[M_v=M_w.\]
\end{lemma}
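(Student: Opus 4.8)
The plan is to show that the multiset $M_v$ is determined by $l(v)$ by exploiting \lemref{ch2-lem3} and \lemref{ch2-lem1}, which together say that whenever two paths $p,q$ to the same vertex satisfy $l(T(p))=l(T(q))$, one can realise an isomorphism $\mathcal{T}(G,R)^p\cong\mathcal{T}(G,R)^q$ compatibly with prefixes. Fix $v,w\in VG\setminus\{R\}$ with $l(v)=l(w)$. Pick any path $\tilde p$ from $R$ to $v$; since $l(v)=l(w)$ there is (by definition of $l$, plus \lemref{ch2-lem3}/\lemref{ch2-lem1} to move the basepoint) a path $\tilde q$ from $R$ to $w$ with $\mathcal{T}(G,R)^{\tilde p}\cong\mathcal{T}(G,R)^{\tilde q}$. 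First I would record what $M_v$ really is in terms of the rooted tree $\mathcal{T}(G,R)^{\tilde p}$: the neighbours of the vertex $\tilde p$ in $\mathcal{T}(G,R)^{\tilde p}$ are exactly $\tilde p_-$ (the reversed-edge predecessor, carrying the half-tree $\mathcal{T}(G,R)^{\tilde p}_{\tilde p_-}$) together with the $\tilde p e$ for $e\in o^{-1}(v)$ (each carrying $\mathcal{T}(G,R)_{\tilde p e}\cong\mathcal{T}(G,R)_{e}$). So the multiset of isomorphism types of half-trees hanging off $\tilde p$, together with the label data, should be exactly the image of $M_v$ under the identification of $X$-labels with these half-tree types; the summand $\{l_p(v)\}$ corresponds to the half-tree through $\tilde p_-$, and $l_m(t_m(o^{-1}(v)))$ corresponds to the forward half-trees.

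The key steps, in order: (1) Make precise the bijection between the label set $X=\{\mathcal{T}(G,R)^p\mid p\in\mathcal{T}(G,R)\}/{\cong}$ used to define $l$ and the isomorphism types of half-trees appearing around a vertex; concretely, for a non-empty path $p$ with last edge $e$, the half-tree $\mathcal{T}(G,R)^p_{p_-}$ has isomorphism type recorded by $l_p(T(p))$ and the half-tree $\mathcal{T}(G,R)^p_{pf}$ for $f\in o^{-1}(T(p))$ has type recorded by $l(t(f))$. (2) Observe that the full unordered multiset of half-trees at the vertex $\tilde p$ of $\mathcal{T}(G,R)^{\tilde p}$ is an isomorphism invariant of the pointed tree $(\mathcal{T}(G,R)^{\tilde p},\tilde p)$, hence is preserved by the isomorphism $\mathcal{T}(G,R)^{\tilde p}\cong\mathcal{T}(G,R)^{\tilde q}$ from \lemref{ch2-lem3}/\lemref{ch2-lem1} (which sends $\tilde p\mapsto\tilde q$). (3) Translate: the multiset of half-tree types at $\tilde p$ equals $l_m(t_m(o^{-1}(v)))\cup\{l_p(v)\}=M_v$, and likewise at $\tilde q$ it equals $M_w$; since the isomorphism matches these two multisets, $M_v=M_w$. (4) Handle the slight subtlety that $l_p(v)$ is well-defined only because of \lemref{ch2-lem3} (all non-root predecessors of $v$ carry the same label) — this is exactly where non-redundancy and strong connectivity of $G\setminus\{R\}$ are used, and where the hypotheses $t^{-1}(R)=\emptyset$ guarantees $\tilde p_-$ is itself a genuine (reversed) path vertex rather than something degenerate.

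The main obstacle I anticipate is step (2) combined with the bookkeeping in step (1): I must be careful that the label $l(\cdot)$ genuinely records the isomorphism type of the correct half-tree and not of the whole tree $\mathcal{T}(G,R)^p$. The label was defined as $l(v)=\{[\mathcal{T}(G,R)^p]_\cong\mid T(p)=v\}$, a \emph{set} of types of whole rerooted trees, whereas $M_v$ mixes in the half-tree $\mathcal{T}(G,R)_{e}$ for forward edges $e$ and the half-tree $\mathcal{T}(G,R)^p_{p_-}$ for the predecessor; reconciling ``type of the whole rerooted tree at $t(f)$'' with ``type of the forward half-tree $\mathcal{T}(G,R)_{pf}$'' requires noting that $\mathcal{T}(G,R)_{pf}\cong\mathcal{T}(G,R)_{f'}\cong\mathcal{T}(G,R^{p})_{p f}$ and that this determines, and is determined by, $l(t(f))$ — i.e. two forward edges give isomorphic half-trees iff their terminal vertices have the same $l$-label, which follows from \coref{ch2-cor0} together with \lemref{ch2-lem1}. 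Once this dictionary between $X$-labels and half-tree isomorphism types is established, the rest is a direct transport of structure along the isomorphism supplied by the earlier lemmas, and I would present it as such, relegating the dictionary to a short preliminary paragraph and then invoking \lemref{ch2-lem3} and \lemref{ch2-lem1} to finish.
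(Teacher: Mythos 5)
Your overall frame (compare the two rooted trees $\mathcal{T}(G,R)^{\tilde p}$ and $\mathcal{T}(G,R)^{\tilde q}$ at their roots and transport the multiset of half-trees along an isomorphism) is the same starting point as the paper's proof, but the ``dictionary'' you postulate in steps (1)--(2) is exactly where the actual content of the lemma lies, and as stated it is partly false. First, the claim that two forward edges give isomorphic half-trees iff their terminal vertices have the same $l$-label breaks down precisely in the case the lemma is about: if $v\neq w$ and $l(v)=l(w)$, non-redundancy forces $\mathcal{T}(G,R)_v\not\cong\mathcal{T}(G,R)_w$, so equal labels do \emph{not} imply isomorphic forward half-trees (the correct statement, used in the paper, is that isomorphic forward half-trees imply equal terminal vertices, by non-redundancy). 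Citing \coref{ch2-cor0} here does not help, since that corollary only concerns edges issuing from the root. Second, and more seriously, the assertion that the backward half-tree $\mathcal{T}(G,R)^{\tilde p}_{\tilde p_-}$ ``has isomorphism type recorded by $l_p(v)$'' is unjustified: $l_p(v)$ is a set of isomorphism types of whole re-rooted trees at predecessors, the backward half-tree is a priori not of the form $\mathcal{T}(G,u)$ for any vertex $u$, and its type could depend on the choice of $\tilde p$. Establishing the link between this backward half-tree and $l_p$ is essentially the lemma itself, so assuming it as a preliminary dictionary is circular.

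The paper fills this gap with two arguments you do not have. (i) Because $v\neq w$ and the graph is non-redundant, the isomorphism $\phi$ cannot send $\tilde p_-$ to $\tilde q_-$ (otherwise it would restrict to $\mathcal{T}(G,R)_v\cong\mathcal{T}(G,R)_w$); hence $\phi$ must ``swap'', producing edges $e_0\in o^{-1}(v)$, $d_0\in o^{-1}(w)$ with $\phi(\tilde p_-)=\tilde q d_0$ and $\phi(\tilde p e_0)=\tilde q_-$, which identifies the backward half-tree with a genuine forward one, $\mathcal{T}(G,R)^{\tilde p}_{\tilde p_-}\cong\mathcal{T}(G,R)_{t(d_0)}$, and gives $t(e)=t(\pi(e))$ for the remaining edges by non-redundancy. (ii) One must still prove the label-set equalities $l(t(e_0))=l_p(w)$ and $l(t(d_0))=l_p(v)$; since labels are \emph{sets} of isomorphism types indexed by all paths into the relevant vertices, this requires the double-inclusion argument in the paper, which quantifies over such paths and reassembles isomorphisms using \lemref{ch2-lem3} and the bijection $\pi$. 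Without (i) and (ii), transporting the root neighbourhood along $\phi$ only yields an equality of multisets of half-tree types, not the claimed equality $M_v=M_w$ of label multisets, so the proposal as written does not prove the lemma.
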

\begin{proof}

    We will assume that $v\neq w$ and thus $\mathcal{T}(G,R)_v\not\cong \mathcal{T}(G,R)_w$. Since $v,w$ have the same labels, if we have some $p\in \mathcal{T}(G,R)$ that ends in $v$ then we have some $q\in \mathcal{T}(G,R)$, with $T(q)=w$ s.t. an isomorphism
        \[\phi:\mathcal{T}(G,R)^{p}\to\mathcal{T}(G,R)^{q}\]
    exists.
    
    Note that we must have $\phi(p_{-})\neq q_{-}$, since if $\phi(p_{-})=q_{-}$, $\phi$ would take $\mathcal{T}(G,R)_v$ to $\mathcal{T}(G,R)_w$ as in the proof of \lemref{ch2-lem1}. This allows us to take  edges $e_0\in o^{-1}(v)$ and $d_0\in o^{-1}(w)$, s.t. $\phi(p_{-})=qd_0$ and $\phi(pe_0)=q_{-}$. Now for all the other edges that originate in $v$ we have a bijection 
        \[\pi:o^{-1}(v)\setminus\{e_0\}\to o^{-1}(w)\setminus\{d_0\} \]
    s.t. $\phi(pe)=q\pi(e)$ for each $e\in o^{-1}(v)\setminus\{e_0\}$ and thus we have 
        \[\mathcal{T}(G,R)_{t(e)}\cong \mathcal{T}(G,R)_{t(\pi(e))}.\]
    and additionally
         \[ \mathcal{T}(G,R)^{p}_{p_{-}}\cong \mathcal{T}(G,R)_{t(d_0)}.\]
    \begin{center}
            \begin{tikzpicture}
                \node[]                 (p1--) at (-7, 2)   {};
                \node[]                 (p2--) at (-5.5, 3)   {};
                \node[solid node]       (p-) at (-5,1)      {};
                \node[solid node]       (p) at (-4,0)       {};
                \node[solid node]       (p0) at (-2,-0.75)  {};
                \node[]                 (p1) at (-3,-1)     {};
                \node[]                 (p2) at (-5,-1)     {};
                \node[]                 (q--) at (-1, 3)     {};
                \node[solid node]       (q-) at (1,1)       {};
                \node[solid node]       (q) at (2,0)        {};
                \node[solid node]       (q0) at (4,-0.75)   {};
                \node[]                 (q01) at (4.5,-2)     {};
                \node[]                 (q02) at (5.3,-1)     {};
                \node[]                 (q1) at (1,-1)      {};
                \node[]                 (q2) at (3,-1)      {};
                \path[-latex]
                        (p-)        edge                                 node[above]           {}       (p)
                        (p)         edge                                 node[right]           {$e_0$}  (p0)
                                    edge                                 node[right]           {}       (p1)
                                    edge                                 node[right]           {}       (p2)

                        (q-)        edge                                 node[above]           {}       (q)
                        (q)         edge                                 node[right]           {$d_0$}  (q0)
                                    edge                                 node[right]           {}       (q1)
                                    edge                                 node[right]           {}       (q2)

                          ;
                \draw (-4,-0.9) ellipse (0.9 and 0.25); 
                \draw (2,-0.9) ellipse (0.9 and 0.25); 
                \draw[dashed] (p1--) -- (p-);
                \draw[dashed] (p2--) -- (p-);
                \draw[dashed] (q--) -- (q-);
                \draw[dashed] (q0) -- (q01);
                \draw[dashed] (q0) -- (q02);
                \path[->]   (-4,-1.5)   edge[bend right]    node[below,scale=1.5]   {$\cong$}   (2,-1.5);
                \path[->]   (-4.8,2)    edge[bend left ]    node[above,scale=1.5]   {$\cong$}   (4.8,-0.7);      
            \end{tikzpicture}
        \end{center}

    Since the graph is non-redundant, we thus have $t(e)=t(\pi(e))$ for each $e\in o^{-1}(v)\setminus\{e_0\}$. So it now suffices to show that $l(t(e_0))=l_p(w)$ and $l_p(v)=l(t(d_0))$.


    We will first show $l(t(e_0))\subseteq l_p(w)$ and $l(t(d_0))\subseteq l_p(v)$, for this. Take some path $p$ that ends in $t(e_0)$, again we may assume that $p=\tilde{p}e_0$ for some path $\tilde{p}$ ending in $v$, by \lemref{ch2-lem3}. Since $l(v)=l(w)$ we have some path $q$ that ends in $w$ s.t. $\mathcal{T}(G,R)^{\tilde{p}}\cong \mathcal{T}(G,R)^{q}$. Since $w\neq R$, $q$ is not empty, and thus we have a prefix $\tilde{q}$ s.t. $q=\tilde{q}d$ for an edge $d$ terminating in $w$. Since we have the bijection $\pi$, we can reassemble the isomorphism such that it takes $\tilde{p}e$ to $q\pi(e)$ for edges $e\neq e_0$ and thus has to take $p=\tilde{p}e_0$ to $\tilde{q}$. This gives us $\mathcal{T}(G,R)^{p}\cong \mathcal{T}(G,R)^{\tilde{q}}$. Therefore $l(t(e_0))\subseteq l_p(w)$, and by swapping $v$ and $w$ we can also show that $l(t(d_0))\subseteq l_p(v)$.

    For the converse inclusions $l_p(w) \subseteq l(t(e_0))$ and $l_p(v) \subseteq l(t(d_0))$ we take some path $\tilde{p}$ that ends in a predecessor of $v$. Then for an edge $e$ connecting it to $v$ define $p=\tilde{p}e$ a path ending in $v$. As $l(v)=l(w)$ we have some path $q$ ending in $w$ with $\mathcal{T}(G,R)^{p}\cong\mathcal{T}(G,R)^{q}$. As before, this isomorphism can be assumed to take $\tilde{p}$ to $qd_0$, thus gives us the isomorphism between $\mathcal{T}(G,R)^{\tilde{p}}$ and $\mathcal{T}(G,R)^{qd_0}$. This shows that $l_p(v) \subseteq l(t(d_0))$ and by swapping $v$ and $w$ in the above argument we also have $l_p(w) \subseteq l(t(e_0))$.

    This shows that $M_v=M_w$.
\end{proof}
This allows us to define $M_{l(v)}:=M_v$ without contradiction.
From the above proof we can see that 
\begin{cor}\label{ch2-cor1}
    For any non-redundant rooted graph $(G,R)$, with $G\setminus\{R\}$ strongly connected and any $v\in VG$, s.t. there exists $w\neq v$ s.t. $l(v)=l(w)$ we have $|l(v)|=1$
\end{cor}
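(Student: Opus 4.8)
The plan is to reduce the claim to showing that the subtree of $\mathcal{T}(G,R)^p$ hanging at the immediate predecessor of $p$ is, up to isomorphism, independent of the path $p$ ending at $v$. Fix such a $p$ and write $p=p_-e_p$. Every edge of $\mathcal{T}(G,R)^p$ points away from the root $p$, whose children are $p_-$, carrying the subtree $\mathcal{T}(G,R)^p_{p_-}$, together with the vertices $pe$ for $e\in o^{-1}(v)$, carrying the subtree $\mathcal{T}(G,R)_{t(e)}$; this latter multiset depends only on $v$. Since such a rooted tree is determined up to isomorphism by the multiset of isomorphism classes of the subtrees at its root, $\mathcal{T}(G,R)^p$ is determined up to isomorphism by $[\mathcal{T}(G,R)^p_{p_-}]_\cong$, and $|l(v)|=1$ will follow once we know this class is the same for every $p$ ending at $v$.

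To control $[\mathcal{T}(G,R)^p_{p_-}]_\cong$ I reuse the construction in the proof of \lemref{ch2-lem5}. By non-redundancy $\mathcal{T}(G,R)_v\not\cong\mathcal{T}(G,R)_w$, so, as there, for each $p$ ending at $v$ there are a path $q$ ending at $w$ and an isomorphism $\phi\colon\mathcal{T}(G,R)^p\to\mathcal{T}(G,R)^q$ with $\phi(p_-)=qd_0$ for some $d_0\in o^{-1}(w)$, $\phi(pe_0)=q_-$ for some $e_0\in o^{-1}(v)$, a label-preserving bijection $\pi\colon o^{-1}(v)\setminus\{e_0\}\to o^{-1}(w)\setminus\{d_0\}$ with $t(e)=t(\pi(e))$, and $l(t(d_0))=l_p(v)$, $l(t(e_0))=l_p(w)$. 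Restricting $\phi$ gives $\mathcal{T}(G,R)^p_{p_-}\cong\mathcal{T}(G,R)^q_{qd_0}=\mathcal{T}(G,R)_{t(d_0)}$, so $[\mathcal{T}(G,R)^p_{p_-}]_\cong=[\mathcal{T}(G,R)_{t(d_0)}]_\cong$ for a vertex $t(d_0)$ lying in the $p$-independent multiset $t_m(o^{-1}(w))$ and carrying label $l_p(v)$. As non-redundancy makes $u\mapsto[\mathcal{T}(G,R)_u]_\cong$ injective, two different values of $[\mathcal{T}(G,R)^p_{p_-}]_\cong$ would force two distinct successors of $w$ with label $l_p(v)$; thus $|l(v)|$ is at most the number of such successors.

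It remains to rule out two successors of $w$ with the common label $l_p(v)$. I first note that $n(x):=|l(x)|$ is constant on $VG\setminus\{R\}$: by the same idea with the branch $\mathcal{T}(G,R)_v$, the tree $\mathcal{T}(G,R)^p_{p_-}$ is $\mathcal{T}(G,R)^{p_-}$ with one root-subtree isomorphic to $\mathcal{T}(G,R)_v$ deleted, so its isomorphism class is a function of $[\mathcal{T}(G,R)^{p_-}]_\cong$, which runs over all of $l_p(v)$ as $p$ ranges over paths ending at $v$; hence $n(v)\le|l_p(v)|=n(u)$ for every predecessor $u$ of $v$, and similarly for every vertex, so running this around a closed walk through $v$ — available since $G\setminus\{R\}$ is strongly connected — gives $n\equiv n_0$ on $VG\setminus\{R\}$ (and $n(R)=1$ because $t^{-1}(R)=\emptyset$). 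Now combine this with the multiset identity $t_m(o^{-1}(v))\uplus\{t(d_0)\}=t_m(o^{-1}(w))\uplus\{t(e_0)\}$ coming from $\pi$ and with non-redundancy: if $l_p(v)\ne l_p(w)$ this already isolates $t(d_0)$, and if $l_p(v)=l_p(w)$ one passes to a chain of predecessors on which $n$ is constant to do the same; either way $t(d_0)$, hence $[\mathcal{T}(G,R)^p_{p_-}]_\cong$, does not depend on $p$, so $|l(v)|=1$.

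The main obstacle is exactly this last step — showing $t(d_0)$ is independent of $p$, equivalently that $w$ has a unique successor (up to isomorphism of its descendant tree) of label $l_p(v)$. Everything before it is just a rearrangement of the proof of \lemref{ch2-lem5}; the delicate case is $l_p(v)=l_p(w)$, where $t(d_0)$ and $t(e_0)$ cannot be separated by labels and one must lean on the multiset identity $M_v=M_w$ and the constancy of $n$.
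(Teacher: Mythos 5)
Your reduction is the same as the paper's: via the construction in \lemref{ch2-lem5}, attach to each path $p$ ending at $v$ a pair $(e_p,d_p)$ and reduce the corollary to showing that $t(d_p)$ (equivalently $[\mathcal{T}(G,R)^p_{p_-}]_{\cong}$) does not depend on $p$. The gap is in the final step, which is exactly where the content of the corollary lies. First, your stated target is too strong: you propose to ``rule out two successors of $w$ with the common label $l_p(v)$'', but nothing in the hypotheses forbids $w$ from having two distinct outgoing neighbours with the same label (repeated labels in $M_w$ are perfectly admissible); what is actually needed is only that the particular vertices $t(d_p)$ produced by the isomorphisms coincide for all $p$, so the step you defer may not even be provable as phrased. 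Second, in the only hard case, $l_p(v)=l_p(w)$, you give no argument: ``one passes to a chain of predecessors on which $n$ is constant to do the same'' does not explain how constancy of $|l(\cdot)|$ on $VG\setminus\{R\}$ (a correct but here unused observation) isolates $t(d_0)$ inside the identity $t_m(o^{-1}(v))\uplus\{t(d_0)\}=t_m(o^{-1}(w))\uplus\{t(e_0)\}$; when $l(t(d_0))=l(t(e_0))$ the label-restriction trick that settles your other case gives nothing, since $t(d_0)$ and $t(e_0)$ lie in the same label class.

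The paper closes precisely this point by a counting argument you never make: for two paths $p,q$ ending at $v$ it uses \emph{both} bijections $\pi_p$ and $\pi_q$ at once, comparing the number of edges of $o^{-1}(v)$ and of $o^{-1}(w)$ that terminate at the single vertex $t(d_q)$ as seen through $\pi_q$ and through $\pi_p$; the assumption $t(d_p)\neq t(d_q)$ makes the two counts incompatible, and no case distinction on labels is needed. Your treatment of the case $l_p(v)\neq l_p(w)$ (restricting the multiset identity to vertices of label $l_p(v)$, using $l(t(d_0))=l_p(v)$ and $l(t(e_0))=l_p(w)$ from \lemref{ch2-lem5}) is fine, but without a two-bijection comparison of this kind, or some genuine substitute, the case $l_p(v)=l_p(w)$ --- and hence the corollary --- remains unproven.
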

\begin{proof}
    As in the proof of \lemref{ch2-lem5} we have for each path $p\in\mathcal{T}(G,R)$ terminating in $v$, we have some $e_p\in o^{-1}(v)$, $d_p\in o^{-1}(w)$ s.t. there is a bijection
        \[\pi_p:o^{-1}(v)\setminus\{e_p\}\to o^{-1}(w)\setminus\{d_p\}\]
    with $t(\pi_p(e))=t(e)$ for each $e\in o^{-1}(v)\setminus\{e_p\}$, and 
        \[\mathcal{T}(G,R)^{p}_{p_{-}}\cong \mathcal{T}(G,R)_{t(d_p)},\]
    for $p_{-}$ being the immediate prefix of $p$. Note that $t(e_p)\neq t(d_p)$ since then we could extend $\pi$ to all of $o^{-1}(v)$ inducing a n.e.c. relation. Now to show that $\mathcal{T}(G,R)^{p}\cong \mathcal{T}(G,R)^{q}$, for two paths with $T(p)=T(q)=v$ it suffices to show that $t(d_{p})=t(d_q)$ since then 
        \[\mathcal{T}(G,R)^{p}_{p_{-}}\cong\mathcal{T}(G,R)_{t(d_p)}\cong\mathcal{T}(G,R)_{t(d_q)} \mathcal{T}(G,R)^{q}_{q_{-}}.\]
    To see that assume that $t(d_p)\neq t(d_q)$. Let $k_q\in\mathds{Z}_{\geq 0}$ be the amounts of edges $e\in o^{-1}(v)$ with $t(e)=t(d_q)$, and analogously let $l_q\in\mathds{Z}_{\geq 0}$ be the amount of $e\in o^{-1}(w)$ with $t(e)=t(d_q)$. Now, since $t(e_q)\neq t(d_q)$ we have $l_q=k_q+1$, by looking at $\pi_q$. Additionally, if we look at $\pi_p$ since $t(d_p)\neq t(d_q)$ we have
        \[l_q=k_q+\delta_{t(e_p)=t(d_q)}.\]
    combining these equalities shows that $t(e_p)=t(d_q)$. Analogously we can show that $t(e_q)=t(d_p)$. However this allows us to calculate with multisets
	\[t_m(o^{-1}(v))\uplus t_m(o^{-1}(v))=t_m(o^{-1}(v)\setminus\{e_q\})\uplus\{t(e_q), t(e_p)\} \uplus t_m(o^{-1}(v)\setminus\{e_p\})=t_m(o^{-1}(w)\setminus\{d_q\})\uplus\{t(d_p),t(d_q)\} \uplus t_m(o^{-1}(w)\setminus\{d_p\})=t_m(o^{-1}(w))\uplus t_m(o^{-1}(w))\]
    ,where we interpret $\{t(e_q), t(e_p)\}$ and  $\{t(d_p),t(d_q)\}$ as multisets with all multiplicities equal to $1$. This shows that $t_m(o^{-1}(v))=t_m(o^{-1}(w))$  and thus there is an n.e.c. relation $\sim$ with $v\sim w$. This is however impossible since we assumed $v\neq w$ and $G$ non-redundant.
 
    So we must have $t(d_p)=t(d_q)$ and thus $\mathcal{T}(G,R)^{p}\cong \mathcal{T}(G,R)^{q}$.
\end{proof}

Note also that if the predecessors of a vertex have label with only one element, the vertex itself also has to have a label with only one vertex.

\begin{lemma}\label{ch2-lem5.5}
    For any non-redundant rooted graph $(G,R)$, s.t. $G\setminus \{R\}$ is strongly connected, then for any edge $e\in EG$ with $o(e)\neq R$ we have: 
        \[|l(o(e))|=1\implies |l(t(e))|= 1\]
\end{lemma}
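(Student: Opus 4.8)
The plan is to show that the label of $t(e)$ coincides with the label $l_p(t(e))$ of its predecessors, which by hypothesis is $l(o(e))$, a singleton. More precisely, I want to prove that if $v := t(e)$ has a predecessor $o(e)$ with $|l(o(e))| = 1$, then every path $p \in \mathcal{T}(G,R)$ ending at $v$ satisfies $\mathcal{T}(G,R)^p \cong \mathcal{T}(G,R)^{p'}$ for any two such paths $p, p'$, so that $l(v)$ is a singleton. The natural strategy is to reduce comparing $\mathcal{T}(G,R)^p$ and $\mathcal{T}(G,R)^{p'}$ (for $p = \tilde p e$, $p' = \tilde p' e$ with $\tilde p, \tilde p'$ ending at $o(e)$) to comparing $\mathcal{T}(G,R)^{\tilde p}$ and $\mathcal{T}(G,R)^{\tilde p'}$, which are isomorphic precisely because $l(o(e))$ is a singleton and $|l(o(e))|=1$ forces $\mathcal{T}(G,R)^{\tilde p} \cong \mathcal{T}(G,R)^{\tilde p'}$.

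**First I would** invoke \lemref{ch2-lem3} to assume, after possibly changing the representative, that every path ending at $v$ has the form $\tilde p e$ where $\tilde p$ ends at $o(e)$; this uses that $o(e) \neq R$ and $G \setminus \{R\}$ is strongly connected and $\mathcal{T}(G,R)$ is cocompact (the latter must be implicitly available, or one works with the conclusion vacuously when $v$ has no two distinct paths giving non-isomorphic $\mathcal{T}(G,R)^{\cdot}$). Then, given two paths $p = \tilde p e$ and $q = \tilde q e$ ending at $v$, since $l(o(e))$ is a singleton we have $\mathcal{T}(G,R)^{\tilde p} \cong \mathcal{T}(G,R)^{\tilde q}$, and by \lemref{ch2-lem1} there is an isomorphism $\psi: \mathcal{T}(G,R)^{\tilde p} \to \mathcal{T}(G,R)^{\tilde q}$ taking $\tilde p r$ to $\tilde q r$ for all $r \in \mathcal{T}(G, o(e))$. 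In particular $\psi$ takes the subtree $\mathcal{T}(G,R)_{\tilde p}$ isomorphically to $\mathcal{T}(G,R)_{\tilde q}$ via the standard identification, and since $\tilde p e, \tilde q e$ lie in these subtrees, $\psi$ restricts to an isomorphism between $\mathcal{T}(G,R)^p$ and $\mathcal{T}(G,R)^q$ (the extra edge reversal along the last edge $e$ is handled identically on both sides, because $\psi$ matches $\tilde p e$ with $\tilde q e$). This gives $l(p) = l(q)$, hence $|l(v)| = 1$.

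**The main obstacle** I expect is the reduction step: carefully checking that the isomorphism $\psi$ from \lemref{ch2-lem1}, which reverses only the path from the root to $\tilde p$, can be "upgraded" to one reversing the path to $\tilde p e$ — i.e. that flipping one more edge $e$ on each side is compatible with $\psi$. This works because $\psi$ sends $\tilde p e$ to $\tilde q e$ and sends the subtree hanging off $\tilde p$ to the subtree hanging off $\tilde q$ by the canonical prefix-replacement map, so the orientation change on the single edge from $\tilde p$ to $\tilde p e$ transports exactly to the orientation change on the edge from $\tilde q$ to $\tilde q e$; formally one cites the remark after \lemref{ch2-lem-1} that a homomorphism mapping $\tilde p e \mapsto \tilde q e$ induces a homomorphism $(\mathcal{T}(G,R)^{\tilde p})^{\tilde p e} \to (\mathcal{T}(G,R)^{\tilde q})^{\tilde q e}$, and notes $(\mathcal{T}(G,R)^{\tilde p})^{\tilde p e} = \mathcal{T}(G,R)^p$ since reversing the root-to-$\tilde p$ path and then the $\tilde p$-to-$p$ edge reverses the whole root-to-$p$ path. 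A secondary point to verify is that the \lemref{ch2-lem3} normalization genuinely covers all paths ending at $v$ up to the isomorphism type of $\mathcal{T}(G,R)^{\cdot}$; here non-redundancy and strong connectivity of $G \setminus \{R\}$ do the work, exactly as in the proof of \lemref{ch2-lem5}.
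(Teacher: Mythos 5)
Your proposal is correct and follows essentially the same route as the paper: reduce an arbitrary pair of paths ending at $t(e)$ to paths of the form $\tilde p e$, $\tilde q e$ via the argument in the proof of \lemref{ch2-lem3} (which needs cocompactness and strong connectivity, as you note), then use $|l(o(e))|=1$ together with \lemref{ch2-lem1} to get an isomorphism sending $\tilde p r\mapsto \tilde q r$, and finally upgrade it to an isomorphism $\mathcal{T}(G,R)^{\tilde p e}\cong\mathcal{T}(G,R)^{\tilde q e}$ exactly as in the remark after \lemref{ch2-lem-1}. The only cosmetic difference is that you cite the statement of \lemref{ch2-lem3} (and mention non-redundancy) for the normalization step, whereas what is really used is the construction inside its proof, which is how the paper phrases it.
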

\begin{proof}
    Take some paths $p_{+}$ and $q_{+}$ with $T(p_{+})=T(q_{+})=t(e)$. We will show that $\mathcal{T}(G,R)^{p_{+}}\cong \mathcal{T}(G,R)^{q_{+}}$, as in the proof of \lemref{ch2-lem3} we may assume that $p_{+}=pe$ and $q_{+}=qe$ for some $p,q\in\mathcal{T}(G,R)$. Since $T(p)=T(q)=o(e)$ we have an isomorphism:
        \[\phi:\mathcal{T}(G,R)^{p}\to\mathcal{T}(G,R)^{q}\]
    by \lemref{ch2-lem1}, we may assume that $\phi(pr)=qr$ for any $r\in\mathcal{T}(G,o(e))$ and thus in particular $\phi(p_{+})=\phi(pe)=qe=q_{+}$. So $\phi$ induces an isomorphism:
        \[\mathcal{T}(G,R)^{p_+}\cong \mathcal{T}(G,R)^{q_+}\]
\end{proof}

Now if we combine \lemref{ch2-lem3}, \coref{ch2-cor1} and \lemref{ch2-lem5.5} we get that if a vertex has two predecessors in $G\setminus\{R\}$, then it must have a label with cardinality one. And by recursively applying \lemref{ch2-lem5.5}, we see that the only way there can be labels with more than one element (in $G$ with $G\setminus \{R\}$ strongly connected) is that every vertex has a unique predecessor in $G\setminus\{R\}$. In this case, since $G\setminus\{R\}$ is strongly connected, the vertices of $G\setminus\{R\}$ can be assumed to be $v_1,v_2,v_3,\dots,v_N$ with edges only going from $v_i$ to $v_{i+1\bmod N}$. We will call such a graph \textbf{circular}.  This case will be considered separately.

In the non-circular case,each label has size $1$ and so we have by \coref{ch2-cor0}, that for any $v$, s.t. there are edges $e_1\in o^{-1}(R)$ $e_2\in o^{-1}(v)$ with $t(e_1)=t(e_2)$ we have $l(v)\cap l(R)\neq \emptyset$ and thus $l(v)=l(R)$. Showing the third property of \defref{ch2-def1}.

\begin{cor}\label{ch2-cor2}
    For any $v$ s.t. $t(o^{-1}(v))\cap t(o^{-1}(R))\neq \emptyset$ we have
        \[l_m(t_m(o^{-1}(R)))=l_m(t_m(o^{-1}(v)))\cup\{l_p(v)\}.\]
\end{cor}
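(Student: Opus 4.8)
The plan is to prove \coref{ch2-cor2} by comparing the unfolding tree at the root with the unfolding tree at the vertex $v$, using \coref{ch2-cor0} as the main engine. First I would unpack the hypothesis: we are given $v$ and edges $e_1 \in o^{-1}(R)$, $e_2 \in o^{-1}(v)$ with $t(e_1) = t(e_2)$. By \coref{ch2-cor0} there is a path $q \in \mathcal{T}(G,R)$ with $T(q) = o(e_2) = v$ and an isomorphism $\phi : \mathcal{T}(G,R) \to \mathcal{T}(G,R)^q$. (Here I am implicitly in the non-circular, strongly connected, non-redundant setting where \coref{ch2-cor0} applies, as in the surrounding text; I would state this hypothesis explicitly if needed.) Since $\mathcal{T}(G,R) = \mathcal{T}(G,R)^{\varepsilon_R}$, this is exactly the situation analyzed in \lemref{ch2-lem5}: an isomorphism between $\mathcal{T}(G,R)^p$ and $\mathcal{T}(G,R)^q$ where the two basepoints $p = \varepsilon_R$ and $q$ have labels $l(R) = l(v)$ (the equality $l(R) = l(v)$ holds because $l(R) \cap l(v) \neq \emptyset$ forces $l(R) = l(v)$, as noted just before the corollary, unless $v = R$ in which case the statement is trivial).

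Next I would extract the combinatorial content of that isomorphism exactly as in the proof of \lemref{ch2-lem5}. The isomorphism $\phi$ cannot send the predecessor of $\varepsilon_R$ to the predecessor of $q$ (the empty path has no predecessor, so this degenerate sub-case needs a remark, but the argument still goes through: $\phi$ must match the single "extra" half-tree hanging off $q_{-}$ against one of the outgoing half-trees of $\varepsilon_R$, or vice versa). Concretely, writing $o^{-1}(R) = \{e_1, \dots\}$ and $o^{-1}(v) = \{d_1, \dots\}$, the isomorphism yields an edge $e_0 \in o^{-1}(R)$, an edge $d_0 \in o^{-1}(v)$, and a bijection $\pi : o^{-1}(R) \setminus \{e_0\} \to o^{-1}(v) \setminus \{d_0\}$ with $t(e) = t(\pi(e))$ (using non-redundancy to upgrade tree-isomorphism of half-trees to equality of terminal vertices, exactly as in \lemref{ch2-lem5}), together with $l(t(e_0)) = l_p(v)$. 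The point is that $\varepsilon_R$ has no predecessor, so there is no "incoming" half-tree on the root side to match; hence the matching produces exactly one unpaired outgoing edge $e_0$ on the root side, which gets identified with the predecessor-direction of $q$, giving $l(t(e_0)) = l_p(q) = l_p(v)$.

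Finally I would assemble the multiset identity. From the bijection $\pi$ preserving terminal vertices we get $l_m(t_m(o^{-1}(R) \setminus \{e_0\})) = l_m(t_m(o^{-1}(v) \setminus \{d_0\}))$. Adding back $e_0$ on the left and $d_0$ on the right:
\[
l_m(t_m(o^{-1}(R))) = l_m(t_m(o^{-1}(v) \setminus \{d_0\})) \cup \{l(t(e_0))\} = \bigl(l_m(t_m(o^{-1}(v))) \setminus \{l(t(d_0))\}\bigr) \cup \{l(t(e_0))\}.
\]
Now I would show $l(t(d_0)) = l_p(v)$ as well: this is the other half of the \lemref{ch2-lem5} analysis — $\phi$ restricted to the half-tree $\mathcal{T}(G,R)^{\varepsilon_R}_{(\varepsilon_R)_-}$ has no left side, so actually the relevant identity here is that the unpaired half-tree $\mathcal{T}(G,R)_{t(d_0)}$ on the $v$-side is isomorphic to the "predecessor half-tree" of $\varepsilon_R$, which is empty — this is the delicate point. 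The cleaner route: since $q$ is nonempty with last edge terminating in $v$, $\phi^{-1}$ sends $q_{-}$ into the outgoing neighborhood of $\varepsilon_R$, so the predecessor-direction of $q$ corresponds to some $t(e_0)$, giving $l(t(e_0)) = l_p(v)$; and simultaneously the edge $d_0$ (the one whose half-tree is "used up" matching against the predecessor side of $\varepsilon_R$) does not exist because $\varepsilon_R$ has no predecessor, so in fact $d_0$ need not be removed — we get the full bijection $\pi : o^{-1}(R)\setminus\{e_0\} \to o^{-1}(v)$ preserving terminal vertices. Then
\[
l_m(t_m(o^{-1}(R))) = l_m(t_m(o^{-1}(R)\setminus\{e_0\})) \cup \{l(t(e_0))\} = l_m(t_m(o^{-1}(v))) \cup \{l_p(v)\},
\]
which is the claim. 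I expect the main obstacle to be handling the basepoint $\varepsilon_R$ correctly in the half-tree matching argument of \lemref{ch2-lem1}/\lemref{ch2-lem5} — specifically verifying that "the root has no predecessor half-tree" is the reason exactly one outgoing edge $e_0$ of $R$ goes unmatched and gets label $l_p(v)$, with no corresponding edge of $v$ being removed; getting the bookkeeping of which half-tree pairs with which precisely right is where care is needed, but the underlying mechanism is already fully present in the proof of \lemref{ch2-lem5}.
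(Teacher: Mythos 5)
Your proposal is correct and follows essentially the same route as the paper: invoke \coref{ch2-cor0} to get an isomorphism $\mathcal{T}(G,R)\cong\mathcal{T}(G,R)^{q}$ with $T(q)=v$, restrict it to the neighbourhood of the basepoint so that exactly one edge of $o^{-1}(R)$ is matched with the predecessor direction $q_{-}$ (contributing $l_p(v)$) while the remaining edges correspond to $o^{-1}(v)$, and read off the multiset identity. The only difference is cosmetic: you route the bookkeeping through the $e_0$/$d_0$ mechanism of \lemref{ch2-lem5} and non-redundancy before correcting to the full bijection $o^{-1}(R)\setminus\{e_0\}\to o^{-1}(v)$, whereas the paper directly compares labels of matched half-trees.
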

\begin{proof}
    By \coref{ch2-cor0}, we have a path $p\in\mathcal{T}(G, R)$ that ends in $v$ s.t.
        \[\mathcal{T}(G, R)^p\cong \mathcal{T}(G, R)^{\varepsilon_R}=\mathcal{T}(G, R)\]
    Now if we restrict this isomorphism to the neighbourhood of $p$ (in $\mathcal{T}(G, R)^p$) we get a bijection
        \[\pi:\{pe\mid e\in o^{-1}(v)\}\cup\{p_-\}\to \{e\mid e\in o^{-1}(R)\},\]
    where $p_{-}$ is the direct predecessor of $p$, s.t. for each $q$ in the domain of $\pi$ we have
        \[\mathcal{T}(G,R)^q\cong \mathcal{T}(G,R)^{\pi(q)},\]
    which means that $l(T(q))\cap l(T(\pi(q)))\neq \emptyset$. So by the previous lemma, we have
        \[l(t_m(o^{-1}(R)))=l(t_m(o^{-1}(v)))\cup\{l(p_{-})\}\]
    and by noting that $p_{-}$ ends with a predecessor of $v$ it gives us our result.
\end{proof}

Finally, by combining these results, we get that $l$ is an actual labelling, i.e.
\begin{theorem}\label{ch2-thm1}
    For any non-circular rooted graph $(G,R)$, without edges s.t.
    \begin{itemize}
        \item $t^{-1}(R)=\emptyset$
        \item $G\setminus \{R\}$ is strongly connected and non-redundant
    \end{itemize}
    then if $\mathcal{T}(G,R)$ is cocompact, there is an actual labelling on $G$.
\end{theorem}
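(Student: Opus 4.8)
The plan is to verify that the labelling pair assembled in the paragraphs preceding the theorem is \emph{actual} in the sense of \defref{ch2-def1}; all of the substantive work is already contained in \lemref{ch2-lem3}, \lemref{ch2-lem5}, \lemref{ch2-lem5.5} and \coref{ch2-cor0}, \coref{ch2-cor1}, \coref{ch2-cor2}, so the theorem itself is mostly bookkeeping. Concretely, take $X$ to be the set of isomorphism classes $[\mathcal{T}(G,R)^p]_{\cong}$ for $p\in\mathcal{T}(G,R)$, which is finite because $\mathcal{T}(G,R)$ is cocompact, and let the finite label set be its power set; put $l(v)=\{[\mathcal{T}(G,R)^p]_{\cong}\mid T(p)=v\}$, and for $v\neq R$ let $l_p(v)$ be the common label of the non-root predecessors of $v$, which exists by \lemref{ch2-lem3} and is non-vacuous because $G\setminus\{R\}$ is strongly connected and $G$ has no sinks, so $v$ has an in-neighbour inside $G\setminus\{R\}$. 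For $v\neq R$ set $M_{l(v)}:=M_v=l_m(t_m(o^{-1}(v)))\cup\{l_p(v)\}$, well defined by \lemref{ch2-lem5}, for label values realised only by $R$ set $M_{l(R)}:=l_m(t_m(o^{-1}(R)))$, and set $M_x:=\emptyset$ for $x$ outside the image of $l$.

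First I would dispatch conditions (1), (2) and (4) of \defref{ch2-def1}. Condition (1), $l_p(v)\in M_{l(v)}$, is immediate from $M_v=l_m(t_m(o^{-1}(v)))\cup\{l_p(v)\}$, and condition (2), $l(t_m(o^{-1}(v)))=M_{l(v)}\setminus\{l_p(v)\}$, is immediate as well since deleting one copy of $l_p(v)$ from $M_v$ returns $l_m(t_m(o^{-1}(v)))$. For condition (4): if $l(R)$ is realised only by $R$ it holds by our choice of $M_{l(R)}$; and if $l(R)=l(v)$ for some $v\neq R$, then, since $[\mathcal{T}(G,R)]_{\cong}=[\mathcal{T}(G,R)^{\varepsilon_R}]_{\cong}\in l(R)=l(v)$, we get an isomorphism $\mathcal{T}(G,R)^p\cong\mathcal{T}(G,R)$ for some $p$ with $T(p)=v$; comparing the branches hanging at $p$ inside $\mathcal{T}(G,R)^p$ with those hanging at $R$ inside $\mathcal{T}(G,R)$, using $t^{-1}(R)=\emptyset$ so that $R$ has no incoming branch and using non-redundancy to identify matched branches, shows $t(o^{-1}(v))\cap t(o^{-1}(R))\neq\emptyset$, whereupon \coref{ch2-cor2} gives $l_m(t_m(o^{-1}(R)))=l_m(t_m(o^{-1}(v)))\cup\{l_p(v)\}=M_v$, so the two candidate values of $M_{l(R)}$ coincide and (4) holds.

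It remains to check condition (3): for every $e\in t^{-1}(v)$ one needs $l(o(e))=l_p(v)$. If $o(e)\neq R$ this is the very definition of $l_p(v)$ provided by \lemref{ch2-lem3}. If $o(e)=R$, then $R$ and a non-root predecessor $w$ of $v$, which has $l(w)=l_p(v)$, share the out-neighbour $v$, so \coref{ch2-cor0} produces a path $q$ with $T(q)=w$ and $\mathcal{T}(G,R)^q\cong\mathcal{T}(G,R)$; hence $[\mathcal{T}(G,R)]_{\cong}\in l(w)\cap l(R)$, and by the remark recorded just before \coref{ch2-cor2} (which is exactly where non-circularity is used) this forces $l(R)=l(w)=l_p(v)$. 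With (1)--(4) verified, $(l,l_p)$ is an actual labelling of $G$, which is the theorem.

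The only place requiring genuine care, and where the hypotheses $t^{-1}(R)=\emptyset$, non-redundancy, sinklessness and non-circularity all have to act together rather than being absorbed into the cited lemmas, is the root bookkeeping in the last two paragraphs: showing that whenever $R$'s label coincides with that of another vertex, or arises as a predecessor-label, the associated multisets are forced to match up. The one small new computation there is the implication ``$\mathcal{T}(G,R)^p\cong\mathcal{T}(G,R)$ with $T(p)=v$ $\Rightarrow$ $v$ and $R$ have a common out-neighbour'', obtained by matching the branches at the two roots and invoking non-redundancy; I expect that, together with confirming that strong connectedness plus sinklessness makes $l_p$ everywhere defined, to be where essentially all the remaining effort goes, everything else being a direct unwinding of \defref{ch2-def1}.
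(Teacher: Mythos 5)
Your proposal is correct and follows essentially the same route as the paper: the same labelling $l(v)=\{[\mathcal{T}(G,R)^p]_{\cong}\mid T(p)=v\}$ with $l_p$ and $M_{l(v)}:=M_v$ justified by \lemref{ch2-lem3} and \lemref{ch2-lem5}, conditions (1)--(3) read off from the definitions and the non-circularity remark via \coref{ch2-cor0}, and condition (4) from \coref{ch2-cor2}. The only difference is that you spell out the root bookkeeping (the case distinction on whether $l(R)$ is shared and the branch-matching/non-redundancy step giving a common out-neighbour) which the paper's proof leaves implicit; this is a faithful gap-fill, not a different argument.
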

\begin{proof}
    For each $v\in VG\setminus\{R\}$, we will define $l(v)$ to be the set of all isomorphism types of $\mathcal{T}(G, R)^p$ for $p$ that ends in $v$. For each $v\in VG\setminus\{R\}$ we define $l_p(v)$ to be the label $l(o(e))$ for any edge with $o(e)=v$, this is well-defined by \lemref{ch2-lem3}. Using \lemref{ch2-lem5}, we can define $M_{l(v)}:=M_v$. The first $3$ points of the definition of an actual label follow from these definitions, whereas the last point follows from \coref{ch2-cor2}.
\end{proof}

We will be able to remove most of the restrictions on the graph from the theorem. In the circular graph case, we can construct the actual labelling directly. 
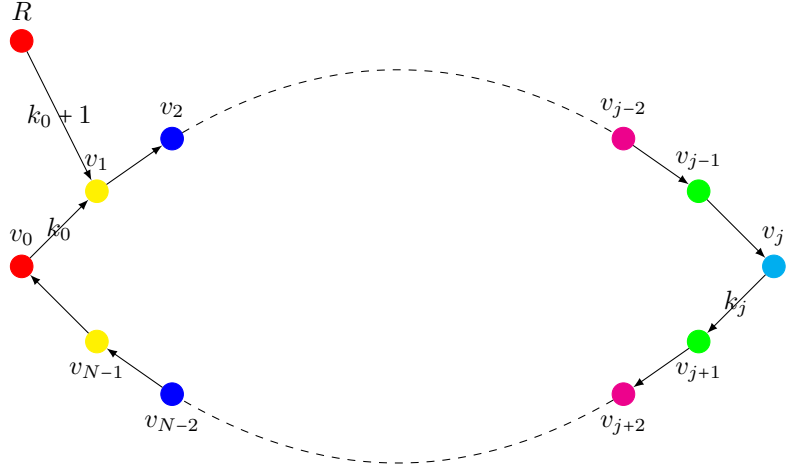
\begin{figure}[h]
    \centering
        \begin{tikzpicture}
            \node[big red node,label=above:{$R$}]                   (R) at (-5,3)               {};
            \node[big red node, label=above:{$v_0$}]                (v0) at (-5,0)              {};
            \node[big yellow node, label=above:{$v_1$}]             (v1) at (-4,1)              {};
            \node[big blue node, label=above:{$v_2$}]               (v2) at (-3,1.7)            {};
            \node[big magenta node, label=above:{$v_{j-2}$}]        (vj-2) at (3,1.7)           {};
            \node[big green node, label=above:{$v_{j-1}$}]          (vj-1) at (4,1)             {};
            \node[big cyan node, label=above:{$v_j$}]               (vj) at (5,0)               {};
            \node[big green node, label=below:{$v_{j+1}$}]          (vj+1) at (4, -1)           {};
            \node[big magenta node, label=below:{$v_{j+2}$}]        (vj+2) at (3,-1.7)          {};
            \node[big blue node, label=below:{$v_{N-2}$}]           (vN-2) at (-3,-1.7)         {};
            \node[big yellow node, label=below:{$v_{N-1}$}]         (vN-1) at (-4,-1)           {};
        
            \path[-latex]
                    (R)             edge[]              node[]                  {$k_0+1$}       (v1)
                    (v0)            edge[]              node[]                  {$k_0$}         (v1)
                    (v1)            edge[]              node[]                  {}              (v2)
                    (vj-2)          edge[]              node[]                  {}              (vj-1)
                    (vj-1)          edge[]              node[]                  {}              (vj)
                    (vj)            edge[]              node[]                  {$k_j$}         (vj+1)
                    (vj+1)          edge[]              node[]                  {}              (vj+2)
                    (vN-2)          edge[]              node[]                  {}              (vN-1)
                    (vN-1)          edge[]              node[]                  {}              (v0)
                    ;
            \draw[dashed] (v2) to[bend left] (vj-2);
            \draw[dashed] (vj+2) to[bend left] (vN-2);

        \end{tikzpicture}
    \caption{All non-redundant circular graphs with cocompact unfolding tree are of this form, where the edges have multiplicity of their label and the unlabelled edges have multiplicity $1$, the colour of the vertices represents their label and $j=\frac{N}{2}$}
\end{figure}
\begin{lemma}\label{ch2-lem5.6}
    For any circular rooted graph $(G,R)$, with cocompact unfolding tree s.t. $G\setminus\{R\}$ is non-redundant and $|o^{-1}(R)|> 2$, we have an actual labelling on $G$.
\end{lemma}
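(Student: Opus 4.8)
The plan is to extract the precise shape of $G$ from the hypotheses, and then write down an explicit actual labelling and check the four conditions of \defref{ch2-def1}. Throughout I write $VG\setminus\{R\}=\{v_0,v_1,\dots,v_{N-1}\}$ so that the only edges of $G\setminus\{R\}$ run from $v_i$ to $v_{i+1\bmod N}$, and I let $m_i\ge 1$ be the number of such edges.

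The first step, which is also the hard part, is the structural reduction. Since $t^{-1}(R)=\emptyset$ and $G\setminus\{R\}$ is strongly connected, every edge leaving $R$ enters the cycle; using \coref{ch2-cor0} (each cycle vertex reachable from $R$ carries a path $q$ with $\mathcal{T}(G,R)\cong\mathcal{T}(G,R)^q$) together with \lemref{ch2-lem1} to normalise such isomorphisms, I would first show that, after a cyclic relabelling, all edges out of $R$ terminate in a single vertex, which I name $v_1$ (so $v_0$ is the cycle-predecessor of $v_1$). Next I would compare $\mathcal{T}(G,R)^{q}$ for paths $q$ ending in a fixed cycle vertex but of lengths differing by multiples of $N$: repeated use of \lemref{ch2-lem2} lets one "unwind" a loop around the cycle, and since $\mathcal{T}(G,R)$ is cocompact only finitely many isomorphism types occur, so the unwinding must in fact be exact. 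The content of this is that the vertex $\varepsilon_R$, which has $|o^{-1}(R)|$ forward copies of $\mathcal{T}(G,v_1)$ below it, must behave exactly like a copy of $v_0$ — which has $m_0$ such copies below it — once it surrenders one child to serve as a parent in a re-rooted tree; this forces $|o^{-1}(R)|=m_0+1$. Running the same comparison around the whole cycle forces the multiplicity sequence to be symmetric about the index $0$, and, combined with non-redundancy (which prevents two cycle vertices from having isomorphic forward subtrees), forces $m_i=1$ for every $i$ other than $0$ and, when $N$ is even, the antipodal index $N/2$. In other words $G$ is, up to relabelling, the graph of the figure; the hypothesis $|o^{-1}(R)|>2$ serves to discard the few degenerate configurations with very few root edges, which are handled separately.

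Given this normal form — I describe the case $N=2j$ even pictured in the figure, the odd case being analogous — set $X=\{c_0,c_1,\dots,c_j\}$, define $l(R):=c_0$ and $l(v_i):=c_{\min(i,\,N-i)}$ (so $l(v_0)=c_0$, $l(v_j)=c_j$, and $l(v_a)=l(v_{N-a})=c_a$ for $0<a<j$), and define $l_p$ on $VG\setminus\{R\}$ by $l_p(v_i):=l(v_{i-1})$, indices read modulo $N$. Let $M_{c_0}$ be $c_1$ with multiplicity $m_0+1$, let $M_{c_j}$ be $c_{j-1}$ with multiplicity $m_j+1$, and let $M_{c_a}:=\{c_{a-1},c_{a+1}\}$ for $0<a<j$. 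The verification of \defref{ch2-def1} is then routine: condition 3 holds because the $G$-predecessors of $v_1$ are exactly $v_0$ and $R$, both carrying $c_0=l_p(v_1)$, while every other $v_i$ has the single predecessor $v_{i-1}$; condition 4 holds because $o^{-1}(R)$ consists of $m_0+1$ parallel edges into $v_1$, so $l(t_m(o^{-1}(R)))$ is $c_1$ with multiplicity $m_0+1$, which is $M_{c_0}=M_{l(R)}$; conditions 1 and 2 follow by inspecting the three kinds of vertex, the only point to notice being that a vertex $v_a$ and its mirror $v_{N-a}$ (for $0<a<j$) must yield the same multiset, one offering $c_{a+1}$ as an outgoing label and $c_{a-1}$ as its predecessor label and the other the reverse, which is exactly guaranteed by the identities $m_a=m_{N-a}=1$ from the structural step, so that $M_{c_a}$ is well defined and $M_{c_a}\setminus\{l_p(v)\}=l(t_m(o^{-1}(v)))$ whichever of the two one uses.

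The main obstacle is the structural reduction, and within it the derivation of $|o^{-1}(R)|=m_0+1$ and of the pattern $m_i=1$ at interior indices. The delicate point is that these are statements about re-rooted trees $\mathcal{T}(G,R)^q$ of unbounded length, whose only finiteness input is cocompactness, so one has to argue carefully that the loop-unwinding supplied by \lemref{ch2-lem2} and \lemref{ch2-lem1} propagates all the way down the (finite but a priori long) reversed paths, and that non-redundancy rules out the uniform-multiplicity cycles which would otherwise also be cocompact. Once this normal form is in hand, the labelling and its verification are essentially bookkeeping, as sketched above.
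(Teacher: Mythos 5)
Your second half is fine: the mirror labelling $l(v_i)=c_{\min(i,N-i)}$, the multisets $M_{c_0},M_{c_a},M_{c_j}$, and the check of the four conditions of \defref{ch2-def1} are exactly the labelling the paper uses, and your verification is correct once the normal form is granted. The gap is that the normal form is the entire content of the lemma, and you do not prove it. You assert that ``the unwinding must in fact be exact,'' that $\varepsilon_R$ ``must behave exactly like a copy of $v_0$ once it surrenders one child,'' and that this forces $|o^{-1}(R)|=m_0+1$ and $m_i=1$ for $i\neq 0,N/2$; but \lemref{ch2-lem2} and \coref{ch2-cor0} only produce, for a suitable path $p$ ending in the cycle-predecessor $v_0$ of $v_1$, an isomorphism $\mathcal{T}(G,R)\cong\mathcal{T}(G,R)^p$ --- by themselves they say nothing about edge multiplicities. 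The missing argument is the concrete one: restrict that isomorphism to the outgoing neighbourhood of the root to conclude (using $|o^{-1}(R)|>2$) that all root edges end in $v_1$, that $|o^{-1}(R)|=k_0+1$, and that $\mathcal{T}(G,R)^{p}_{p_{-}}\cong\mathcal{T}(G,R)_{v_1}$; then compare branch distances: $\mathcal{T}(G,R)^{p}_{p_{-}}$ starts with a spine of reversed edges whose vertices are the unique vertices at each small distance from its root, ending at the last cycle vertex $v_j$ of out-degree $>1$, so the first branching vertex of $\mathcal{T}(G,R)_{v_1}$ must occur at the matching distance, pinning down which $k_i$ equal $1$; finally, non-redundancy (distinct vertices of a non-redundant graph cannot have isomorphic forward trees) applied to the subtrees hanging off an extra edge at $v_j$ forces $v_{j+1}=v_{N-j+1}$, i.e.\ $2j\equiv 0\pmod N$, which is what yields the antipodal multiplicity pattern. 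None of this distance bookkeeping appears in your sketch, and it does not follow formally from the lemmas you cite, so the step from ``certain re-rooted trees are isomorphic'' to the multiplicity pattern is a genuine gap --- one you yourself flag as ``the hard part'' but then leave unargued.

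A smaller remark: your case split is sensible, and in fact more honest than the paper's on one point --- you allow the configuration where only $m_0>1$ (your odd case), whereas the paper's concluding identity $j=N-j$ tacitly assumes the last branching index $j$ is nonzero; your labelling does cover that configuration. But this observation does not substitute for the missing structural derivation, which is where the cocompactness, non-redundancy and $|o^{-1}(R)|>2$ hypotheses actually get used.
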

\begin{proof}
    Write $VG\setminus\{R\}=\{v_0,v_1,\dots, v_{N-1}\}$, s.t. each edge that originates in $v_i$ ends in $v_{i+1\bmod N}$, for every $0\leq i \leq N-1$, additionally denote $k_i:=|o^{-1}(v_i)|$.

We first deal with the case where the graph contains only two vertices, i.e. $N=0$. If we assume that $|o^{-1}(R)|\neq k_0 +1$, then for any path $p\neq\varepsilon_R$ the only non-root vertex in $\mathcal{T}(G,R)^p$ that doesen't have degree $k_0$ is $\varepsilon_R$. So if $|p_1|\neq |p_2|$ we cannot have $\mathcal{T}(G,R)^{p_1}\cong \mathcal{T}(G,R)^{p_1}$ since in these trees $\varepsilon_R$ has different distances from the root. Since in $\mathcal{T}(G,R)$ there are paths of any length, $\mathcal{T}(G,R)$ cannot be cocompact. So we can assume that $|o^{-1}(R)|= k_0 +1$, in this setting we can see that setting $l(R)=l(v_0)=0$ induces an actual labeling.

 So now we may assume that $N>0$ and $v_1\in t(o^{-1}(R))$ and thus by \coref{ch2-cor0} there is a path $p\in\mathcal{T}(G,R)$  that ends in $v_0$ s.t. 
        \[\mathcal{T}(G,R)\cong\mathcal{T}(G,R)^p,\]
    which, after restricting to the outgoing neighbourhood of the root and taking out the edge $e_0$ that is mapped to the immediate prefix of $p$, gives us a bijection $\pi:o^{-1}(v_0)\to o^{-1}(R)\setminus\{e_0\}$ s.t. $t(\pi(e))=t(e)=v_1$. With some $e_0\in o^{-1}(R)$ s.t.
        \[\mathcal{T}(G,R)^{p}_{p_{-}}\cong \mathcal{T}(G,R)_{t(e_0)}.\]
    This means that all but at most one of the edges that originate in $R$ end in $v_1$. Now take $i$ s.t. $t(e_0)=v_i$, then, by the same argument, all but one of the edges that originate in $R$ terminate in $v_i$, but since $|o^{-1}(R)|>2$ we must have $v_i=v_1$. Thus, all edges originating in $R$ end in $v_1$.

    Additionally, since $t(e_0)=v_1$ we have 
        \[\mathcal{T}(G,R)^{p}_{p_{-}}\cong \mathcal{T}(G,R)_{v_1}.\]
    Let $\phi$ be an isomorphism between these trees. Note also that since $O(p)=R$ and $T(p)=v_0$, $p$ must pass through every vertex of $G$. Additionally, since $G\setminus\{R\}$ is non-redundant and has more than one vertex, we cannot have $k_i=1$ for all $i\leq N$, so take $j$ to be the largest index s.t. $k_j>1$. Now let $q$ be the longest prefix of $p$ s.t. $T(q)=v_j$, we can write $p_{-}=qe_{j+1}e_{j+2}\dots e_{N-1}$, with $t(e_i)=v_i$, for $j<i<N$, and note that $qe_{j+1}e_{j+2}\dots e_{N-l}$ is the unique element in $\mathcal{T}(G,R)^{p}_{p_{-}}$ with distance $l$ from the root, for $l\leq N-j$. Take edges $d_l$ for $2\leq l \leq N-j$ with $t(d_l)=v_l$ s.t. $\phi(qe_{j+1}e_{j+2}\dots e_{N-l})=d_2\dots d_{l}$ and $\phi(p_{-})=\varepsilon_{v_1}$. Thus, we also have $|o^{-1}(v_i)|=1$ for each $1\leq i < N-j$ and $|o^{-1}(v_{N-j})|>1$. Now take some edge $\tilde{e}_{j+1}\in o^{-1}(v_j)\setminus\{e_{j+1}\}$, then in turn have some edges $d_{N-j+1}\in o^{-1}(v_{N-j})$ s.t. $\phi(q\tilde{e}_{j+1})=d_2\dots d_{N-j}d_{N-j+1}$ (as $q\tilde{e}_{j+1}$ has distance  $N-j+1$ from $p_{-}$ in $\mathcal{T}(G,R)^{p}_{p_{-}}$) and thus
        \[\mathcal{T}(G,R)_{v_{j+1}}\cong\mathcal{T}(G,R)^{p}_{q\tilde{e}_{j+1}}\cong \mathcal{T}(G,R)_{d_1d_2\dots d_{N-j}d_{N-j+1}}\cong \mathcal{T}(G,R)_{v_{N-j+1}}.\]
    So since $G\setminus\{R\}$ is non-redundant we have to have $v_{j+1}=v_{N-j+1}$ so $j=N-j$(i.e. $j=\frac{N}{2}$, making $N$ even), so for each $1\leq i < j$, $|o^{-1}(v_i)|=1$ and by assumption for each $j<i\leq N$, $|o^{-1}(v_i)|=1$. Thus, $v_0$ and $v_j$ are the only vertices in the graph that have more than one outgoing edge, with the distance from $v_0$ to $v_j$ and $v_j$ to $v_0$ being the same (as $j=N-j$).
    
    We will construct the actual labelling on the graph now by setting  $l(v_i):=i$ and $l(v_{N-i}):=i$ for each $0\leq i\leq j$, and $l(R):=l(v_0)=0$. Since $j=N-j$, this defines the labelling on all vertices of $G$.  Note that if $l(v_l)=l(v_k)$, for $l<k$ we must have $k=N-l$. Of course, we define $l_p(v_i)=l(v_{i-1\bmod N})$, and since each vertex (except for $v_1$) only has one predecessor the third point of \defref{ch2-def1} follows trivially (for $v_1$ it follows since $l(R)=l(v_0)$). Additionally, we set for any $1\leq i\leq j-1$ $M_i:=\{(i+1,1),(i-1,1)\}$, $M_0:=\{(1,k_0+1)\}$ and $M_j=\{(j-1,k_j+1)\}$ (where the second component indicates the multiplicity of the element). Now, since for each $1\leq i\leq j-1$ $l_p(v_i)=i-1$ and has the unique outgoing neighbour $v_{i+1}$ with only one edge pointing towards it, we have $l(t_m(o^{-1}(v_i)))=M_{l(v_i)}\setminus\{l_p(v_i)\}$. Furthermore, for $0\leq i\leq j$ we have $l_p(v_{N-i})=l(v_{N-(i+1)})=i+1$ and the only neighbour of $v_{N-i}$ is $v_{N-(i-1)}$ (with one vertex pointing into it), has label $i$, so
        \[l(t_m(o^{-1}(v_{N-i})))=M_{i+1}\setminus\{i\}=M_{l(v_{N-i})}\setminus\{l_p(v_{N-i})\}.\]
    Since $l_p(v_0)=l(v_N)=l(v_1)=1$ and its unique neighbour has $k_0$ edges pointing to it, we also get $l(t_m(o^{-1}(v_0)))=M_{l(v_0)}\setminus\{l_p(v_0)\}$. Analogously, since $l(v_{j+1})=l(v_{j-1})=j-1=l_p(v_j)$ we also have $l(t_m(o^{-1}(v_j)))=M_{l(v_j)}\setminus\{l_p(v_j)\}$. The second point from \defref{ch2-def1} thus follows.
    Finally, since all edges originating in $R$ point towards $v_1$ and $|o^{-1}(R)|=k_0+1$ since
        \[\mathcal{T}(G,R)\cong\mathcal{T}(G,R)^p,\]
    we must have $l(t_m(o^{-1}(R)))=\{(1,k_0+1)\}=M_{0}=M_{l(R)}$.
    Thus, we have constructed the required labelling. 
\end{proof}
Note that from the proof of this lemma, we can see that we for circular graphs $(G, R)$ that have a cocompact unfolding tree has $N=|VG\setminus\{R\}|$ even and only $3$ vertices can have outgoing degree $>1$: $R,v_1,v_{N/2}$. Furthermore, $v_1$ is the only outgoing neighbour of $R$.

This result removes the non-circularity condition from \thmref{ch2-thm1}, making it evolve into the following.

\begin{customthm}{1.1}
    For any rooted graph $(G,R)$, without edges s.t.
    \begin{itemize}
        \item $t^{-1}(R)=\emptyset$
        \item $|o^{-1}(R)|>2$
        \item $G\setminus \{R\}$ is strongly connected and non-redundant
    \end{itemize}
    then if $\mathcal{T}(G,R)$ is cocompact, there is an actual labelling on $G$.
\end{customthm}
Note the condition $|o^{-1}(R)|>2$ is only relevant for circular graphs.

We can also remove the non-redundancy condition by noting that by \lemref{ch2-lem0.6} if a quotient of a graph has an actual label, so does the graph itself. Thus, if a graph has a cocompact unfolding tree and reduces to a strongly connected graph, it has an actual labelling.


 To remove the strong connectivity condition, we will have to consider focal unfolding cycles separately. They are defined as follows.
 
 \begin{defn}
    For any $N\in \mathds{Z}_{\geq 1}$ and any $(k_i)^{N-1}_{i=0}\in (\mathds{Z}_{\geq 1})^N$ we define the focal unfolding cycle $\mathbf{FC}_{(k_i)^{N-1}_{i=0}}$ with:
    \begin{align*}
        V\mathbf{FC}_{(k_i)^{N-1}_{i=0}}:=  &\{v_0,\dots,v_{N-1}\}\cup\{w_0,\dots, w_{N-1}\}\cup \{R\},\\
        E\mathbf{FC}_{(k_i)^{N-1}_{i=0}}:=  &\{(v_i,v_{i+1\bmod N}, j)\mid 0\leq i< N,\ 1\leq j\leq k_i\}\cup\\
                                            &\{(w_i,v_{i+1\bmod N},j)\mid 0\leq i< N,\ 1\leq j\leq k_i-1 \}\cup\\
                                            &\{(w_i,w_{i-1\bmod N},0)\mid 0\leq i< N\}\cup\\
                                            &\{(R,v_1,i)\mid 1\leq i\leq k_1\}\cup\{(R,w_{N-1},1)\}\text{ and }\\
        \forall v,w\in V\mathbf{FC}&_{(k_i)^{N-1}_{i=0}},\forall j\in \mathds{Z}_{\geq 1}\ o((v,w,j)):=v,\ t((v,w,j)):=w,
    \end{align*}    
    where the last definition only applies if the edge is defined. We will also assume that $k_0>1$, to make sure that the degree of the root is greater than $2$.
 \end{defn}
 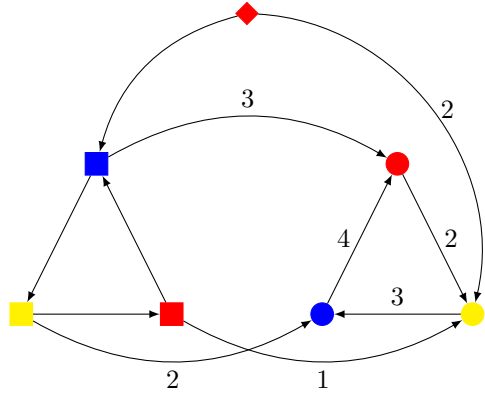
\begin{figure}[h]
     \centering
     \begin{tikzpicture}
            \node[diamond red node]             (R) at (0,3)            {};
            \node[big red node]                 (v1) at (2,1)               {};
            \node[big yellow node]              (v2) at (3,-1)              {};
            \node[big blue node]                (v3) at (1,-1)             {};
            \node[square blue node]             (v3*) at (-2,1)               {};
            \node[square red node]              (v1*) at (-1,-1)              {};
            \node[square yellow node]           (v2*) at (-3,-1)             {};
        
            \path[-latex]
                    (v1)            edge[]              node[right]              {$2$}            (v2)
                    (v2)            edge[]              node[above]              {$3$}            (v3)
                    (v3)            edge[]              node[left]               {$4$}            (v1)
                    (v1*)           edge[]              node[above]              {}               (v3*)
                    (v3*)           edge[]              node[above]              {}               (v2*)
                    (v2*)           edge[]              node[above]              {}               (v1*)
                    (v1*)           edge[bend right]    node[below]              {$1$}            (v2)
                    (v2*)           edge[bend right]    node[below]              {$2$}            (v3)
                    (v3*)           edge[bend left]     node[above]              {$3$}            (v1)
                    (R)             edge[bend left=50]  node[right]              {$2$}            (v2)
                    (R)             edge[bend right]    node[above]              {}               (v3*)
                    ;
     \end{tikzpicture}
     \caption{A focal unfolding cycle associated with the sequence $(2,3,4)$}
     \label{fig:enter-label}
 \end{figure}
The Focal unfolding cycles constitute an exception since they can be non-redundant and not strongly connected, but admit an actual labelling  and therefore have a cocompact unfolding tree. This can, however, be shown to be the only such exception, giving us the following result. 

\begin{lemma}\label{ch2-lem6}
    For each rooted graph $(G,R)$ without sinks, s.t. 
    \begin{itemize}
        \item $t^{-1}(R)=\emptyset$
        \item $G\setminus \{R\}$ is non-redundant.
        \item $\mathcal{T}(G,R)$ is cocompact and not isomorphic to the unfolding tree of a focal unfolding cycle.
    \end{itemize}
    $G\setminus \{R\}$ is strongly connected.
\end{lemma}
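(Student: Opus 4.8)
The plan is to argue by contrapositive: assuming $G\setminus\{R\}$ is not strongly connected, I will show that $\mathcal{T}(G,R)$ is either not cocompact or is the unfolding tree of a focal unfolding cycle. The starting point is to analyze the structure of strongly connected components (SCCs) of $G\setminus\{R\}$. Since $G$ has no sinks, from every vertex there is an infinite forward walk; following such a walk, the sequence of SCCs visited is eventually constant (a walk cannot return to an SCC it has left), so there is a terminal SCC $C$ — one with no edges leaving it — reachable from every vertex of $G_v$ for appropriate $v$. If $G\setminus\{R\}$ is not strongly connected, there is a vertex $u$ not in (at least one such) terminal SCC, or there are at least two distinct terminal SCCs, or a terminal SCC is not reachable from all of $G\setminus\{R\}$.

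First I would reduce to a clean picture: replace $G$ by a non-redundant quotient if necessary (using \lemref{ch2-lem0.6}, \lemref{lem-1}, so that cocompactness and the actual-labelling property are preserved in both directions), and restrict attention to vertices actually reachable from $R$. The key idea is to apply the machinery of \lemref{ch2-lem1}, \lemref{ch2-lem2}, \lemref{ch2-lem3} and \coref{ch2-cor1}: for a vertex $v$ lying ``upstream'' of a terminal SCC $C$, consider long paths $p$ that run from $R$ into $v$ and then cycle around $C$ many times before coming back; cocompactness forces coincidences $\mathcal{T}(G,R)^{pr^n}\cong\mathcal{T}(G,R)^{pr^{n+m}}$, and \lemref{ch2-lem2} then collapses these to $\mathcal{T}(G,R)^p\cong\mathcal{T}(G,R)^{pr^m}$. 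The strategy is to use these forced isomorphisms exactly as in the proof of \lemref{ch2-lem3} and \coref{ch2-cor1} to show that the ``unfolding-type'' label $l(v)=\{[\mathcal{T}(G,R)^p]_{\cong}\mid T(p)=v\}$ behaves so rigidly that the only configurations of SCCs compatible with finitely many types are: a single SCC (the strongly connected case), or the very specific two-cycle-with-backbone configuration appearing in the focal unfolding cycle definition. Concretely, I expect to show that any vertex $v$ not in the terminal SCC must have $|l(v)|=1$ (a forced-singleton phenomenon, via \lemref{ch2-lem5.5} run backwards along the unique-predecessor chains) and that the subgraph outside the terminal SCC must itself be ``circular'' in the sense defined earlier; combined with the analysis of the circular case (as in \lemref{ch2-lem5.6}), the reverse edges $w_i\to w_{i-1}$ and the forward edges $w_i\to v_{i+1}$ are forced, and the multiplicities must match the $k_i$ pattern — which is exactly $\mathbf{FC}_{(k_i)}$.

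The main obstacle I anticipate is ruling out ``branching'' configurations: several SCCs feeding into a common downstream SCC, or an SCC feeding into two different ones. The worry is that finitely many isomorphism types might still be achievable with a more complicated DAG-of-SCCs. I would handle this by a counting/pumping argument: if two distinct SCCs $C_1,C_2$ both reach a common terminal SCC $C$, then by pumping around $C_1$ (resp. $C_2$) and around $C$ one produces, via \lemref{ch2-lem1} and the half-tree comparison technique used in \lemref{ch2-lem5} and \coref{ch2-cor1}, an infinite family of pairwise non-isomorphic rerooted trees $\mathcal{T}(G,R)^p$, contradicting cocompactness — unless the entire ``pre-terminal'' part degenerates to a single simple cycle, and the terminal part also degenerates so tightly that $G\setminus\{R\}$ is either strongly connected or is a focal unfolding cycle. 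Making this pumping argument fully rigorous — in particular tracking how the reversed-path trees $\mathcal{T}(G,R)^p_{p_-}$ embed and forcing the backbone $w_i$-vertices to appear — is the technical heart of the proof; everything else is bookkeeping with the already-established lemmas.
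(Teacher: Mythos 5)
There is a genuine gap. Your plan correctly identifies the hard step --- showing that a non-strongly-connected SCC configuration forces either infinitely many re-rooting types or exactly the focal-unfolding-cycle picture --- but it does not actually prove it: the decisive claims appear only as expectations (``I expect to show $|l(v)|=1$ upstream'', ``I would handle this by a counting/pumping argument'', ``the backbone $w_i$-vertices are forced''), and no mechanism is given that rules out a general DAG of SCCs, e.g.\ two incomparable SCCs feeding a common terminal one, or a terminal SCC with several pairwise non-isomorphic upstream branches. Worse, the tools you propose to run this argument with are not available in this setting: \lemref{ch2-lem3}, \coref{ch2-cor1}, \lemref{ch2-lem5.5} and \lemref{ch2-lem5.6} are all stated and proved in the paper under the standing hypothesis that $G\setminus\{R\}$ is strongly connected (or circular), which is precisely the conclusion you are trying to establish, so invoking them or ``running them backwards'' is circular. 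Concretely, the proof of \lemref{ch2-lem3} needs a return path $\tilde r$ from $v$ back to the other predecessor $w_2$, and in a non-strongly-connected graph such a path is exactly what may fail to exist; only \lemref{ch2-lem1} and \lemref{ch2-lem2} are safe to use, and they alone do not yield the rigidity you need.

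For contrast, the paper avoids this SCC analysis entirely: it passes to the undirected automorphism group of the cocompact tree, notes that its local action diagram (in the sense of Reid--Smith) is finite, leafless, and not a focal cycle (the focal-cycle case being excluded by hypothesis, since the corresponding tree is the unfolding tree of a focal unfolding cycle), concludes geometric density, and then uses the half-tree embedding lemma of that theory to produce, for any $p,q\in\mathcal{T}(G,R)$, some $p'\in\mathcal{T}(G,R)_q$ with $\mathcal{T}(G,R)_{p'}\cong\mathcal{T}(G,R)_p$; non-redundancy then forces $T(p')=T(p)$, and the suffix of $p'$ after $q$ is the required path in $G$, giving strong connectivity directly. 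That classification result is doing exactly the work your pumping argument would have to replicate; without either importing it or supplying a genuinely new combinatorial argument for the branching case, your proposal does not close the lemma.
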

The following version of the proof was pointed out to me by my supervisor, Colin Reid.
\begin{proof}
    We note that the group of undirected automorphisms (i.e. the group of automorphisms ignoring the direction of the edges) of a cocompact tree has finitely many orbits (since two vertices have isomorphic trees rooted on them if and only if there is an undirected isomorphism mapping them to one another). Therefore, the local action diagram (as defined in \cite{Reid2020}*{Definition 1.1.}) of this group is finite. Since the tree this group is acting on does not have leaves, the local action diagram does not have sinks. Thus, the local action diagram  has no horocyclic ends, stray half trees or stray leaves. Additionally, since the tree that a focal cycle action diagram acts on is the unfolding tree of a focal unfolding cycle, the local action diagram cannot be a focal cycle. So by \cite{Reid2020}*{Corollary 5.9., Proposition 5.10.} we can see that the undirected automorphism group acts geometrically dense.  This allows us to apply \cite{Reid2020}*{Lemma 2.3.} to show that for any directed edges $d,e\in E\mathcal{T}(G,R)$ there is an undirected automorphism $g$ s.t. $\mathcal{T}(G,R)_{t(ge)}\subseteq\mathcal{T}(G,R)_{t(d)}$. This means we have for any $p,q\in \mathcal{T}(G,R)$ there is some $p'\in\mathcal{T}(G,R)_q$ s.t. $\mathcal{T}(G,R)_{p'}\cong\mathcal{T}(G,R)_p$. Now if we take some vertices $v,w\in VG$ and $p,q\in \mathcal{T}(G,R)$ with $T(p)=v, T(q)=w$, we have as before some $p'\in\mathcal{T}(G,R)_q$ s.t. $\mathcal{T}(G,R)_{p'}\cong\mathcal{T}(G,R)_p$. So since $G$ is non-redundant we have $T(p')=T(p)=v$ and can write $p'=qr$, for some path $r$, since it is in the half tree rooted at $q$. Since $T(q)=v$, $r$ is the desired path from $w$ to $v$. This gives us strong connectivity of $G$.

\end{proof}
To deal with the case of focal unfolding cycles, we will first show that they in fact have an actual labelling and then that any factor of a focal unfolding cycle over a non-edge-collapsing equivalence relation is a focal unfolding cycle or circular. Thus, any non-redundant graph, which has the same unfolding tree as an unfolding focal cycle, is a focal unfolding cycle or a circular graph (with cocompact unfolding tree) and thus has an actual labelling.

\begin{lemma}\label{ch2-lem7}
    Any focal unfolding cycle has an actual labelling.
\end{lemma}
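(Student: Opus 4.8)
The plan is to write down an explicit labelling on $\mathbf{FC}_{(k_i)_{i=0}^{N-1}}$, guided by the colouring in Figure~\ref{fig:enter-label}, and then verify the four conditions of \defref{ch2-def1} directly. The natural choice of label set is $X = \{0,1,\dots,N-1\}$ (one label per ``phase'' around the cycle, identifying the $v$-strand and the $w$-strand at the same index). Concretely I would set $l(v_i) := i$, $l(w_i) := i+1 \bmod N$, and $l(R) := 1$. The motivation is that $v_i$ and $w_i$ both map onto $v_{i+1}$ under (almost) all their outgoing edges, so they should ``look the same going forward''; the shift in the $w$-index is because $w_i$'s forward edges land in $v_{i+1}$, matching $v_{i+1}$'s own label. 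The predecessor labels are then forced: $l_p(v_{i+1}) = i = l(v_i) = l(w_i)$ for the copies coming from $v_i$ and $w_i$, and $l_p(w_{i-1}) = l(w_i) = i+1 \bmod N$, and for $R$'s children we need $l_p(v_1) = 1 = l(R)$ (consistent) and $l_p(w_{N-1}) = 1 = l(R)$ as well — so I should double-check that $w_{N-1}$'s predecessor labels are all equal to $1$, i.e. that $l(w_0) = 1$, which holds iff $1 \equiv 1 \bmod N$, fine; and $l(R) = 1$ must match $l(v_0)=0$? No — $w_{N-1}$'s incoming edges come from $w_0$ (label $1$) and from $R$ (label $1$), so condition (3) is satisfied. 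Similarly $v_{i+1}$ receives edges from $v_i$ (label $i$) and $w_i$ (label $i$), consistent; and $v_1$ additionally receives edges from $R$ (label $1$)? That would need $l(R) = l(v_0) = 0$, contradiction. So I must instead reroute: $R$'s edges to $v_1$ should be thought of as parallel to $v_0$'s edges to $v_1$, forcing $l(R) = l(v_0) = 0$. Then $R$'s single edge to $w_{N-1}$ forces $l_p(w_{N-1})$ to also include $0$, but $w_{N-1}$'s other predecessor $w_0$ has label $l(w_0)$, so I need $l(w_0) = 0$, i.e. the $w$-strand should be indexed so that $l(w_i) = i$ (no shift), and then $v_{i+1}$'s predecessors $v_i, w_i$ have labels $i, i$ — consistent — while $w_i$'s forward edges to $v_{i+1}$ would need $l(v_{i+1})$ compatible with $w_i$'s multiset. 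I will settle the exact index convention by working condition (3) backwards, which pins everything down uniquely; the cleanest version is almost certainly $l(v_i) = l(w_i) = i$ for all $i$, and $l(R) = 0$.

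With the labelling fixed, the next step is to produce the multisets $M_x$ for $x \in \{0,\dots,N-1\}$. For $x = i$ with $0 < i < N$, the vertex $v_i$ has $k_i$ forward edges into $v_{i+1}$ (label $i+1 \bmod N$) and $k_i-1$ of $w_i$'s edges into $v_{i+1}$ plus one edge of $w_i$ into $w_{i-1}$ (label $i-1 \bmod N$); combining the out-multiset of $v_i$ with $l_p(v_i) = l(v_{i-1}) = i-1$ should give the same multiset as combining the out-multiset of $w_i$ with $l_p(w_i)$. I expect $M_i = \{\,(i+1 \bmod N)^{\times k_i},\ (i-1 \bmod N)^{\times 1}\,\}$ for $i \neq 0$ and $M_0 = \{\,1^{\times k_0}, (N-1)^{\times 1}\,\}$, with the understanding that the root's multiset $M_{l(R)} = M_0$ must equal $l(t_m(o^{-1}(R)))$, namely $k_1$ edges to $v_1$ (label $1$) together with one edge to $w_{N-1}$ (label $N-1$) — so actually $M_0 = \{1^{\times k_1}, (N-1)^{\times 1}\}$, and I need $k_0 = k_1$ or I need to re-examine; in fact condition (4) uses the root's own out-neighbourhood, while condition (2) for $v_0$ uses $M_{l(v_0)}\setminus\{l_p(v_0)\}$, and these two uses of $M_0$ must be reconciled. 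This reconciliation — checking that the ``$+1$ to the root degree'' bookkeeping (the $\{(k_0+1)\}$-style multiplicities appearing in \lemref{ch2-lem5.6} and the definition's assumption $k_0 > 1$) makes conditions (2) and (4) simultaneously satisfiable — is where I expect the only real subtlety to lie.

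So the key steps, in order, are: (i) fix $X = \mathbb{Z}/N\mathbb{Z}$ and define $l, l_p$ by the cyclic-index rule, reading the correct convention off condition (3); (ii) verify condition (3) (every predecessor of a given vertex gets the same label) — this is essentially immediate from the edge list of $\mathbf{FC}$, since the predecessors of $v_{i+1}$ are exactly $v_i$ and $w_i$ (and $R$ when $i+1=1$, with $l(R)$ chosen to match), and the predecessors of $w_{i-1}$ are just $w_i$ (and $R$ when $i-1 = N-1$); (iii) define $M_x$ explicitly and verify conditions (1) and (2) for each $v_i$ and $w_i$ by a direct multiset computation — the point being that $v_i$ and $w_i$ have out-multisets differing by one edge redirected from $v_{i+1}$ to $w_{i-1}$, which is exactly compensated by the difference $l_p(v_i)$ versus $l_p(w_i)$; (iv) verify condition (4) at the root, using $|o^{-1}(R)| = k_1 + 1$ and the assumption $k_0 > 1$ (equivalently that the root degree exceeds $2$, so we are genuinely in the focal-cycle regime and not a degenerate case). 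I anticipate the main obstacle is purely bookkeeping: getting the index shifts and the ``$k_i$ vs.\ $k_i - 1$'' multiplicities to line up so that $M_{l(v_i)}$ is genuinely well-defined (independent of whether we compute it from $v_i$ or from $w_i$) and simultaneously makes the root condition hold; there is no conceptual difficulty, but it needs to be done carefully, and I would present it as a short table of the out-multisets of $R$, each $v_i$, and each $w_i$ against the claimed $M_x \setminus \{l_p(\cdot)\}$.
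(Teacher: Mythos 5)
Your final labelling ($l(v_i)=l(w_i)=i$, $l(R)=0$, with $M_i=\{(i+1\bmod N,\,k_i),\,(i-1\bmod N,\,1)\}$, second entries denoting multiplicities) and your verification of conditions (1)--(3) of \defref{ch2-def1} coincide with the paper's proof, so in substance this is the same argument; the initial back-and-forth over the index convention lands on the correct choice. The one step you leave open --- whether condition (4) at the root forces $k_0=k_1$ --- is not a genuine obstruction but an artefact of a typo in the paper's definition of $\mathbf{FC}_{(k_i)_{i=0}^{N-1}}$: the root is intended to carry $k_0$ (not $k_1$) parallel edges to $v_1$, as is confirmed by the remark that assuming $k_0>1$ makes the root's degree exceed $2$ (it is $k_0+1$, not $k_1+1$), by the figure, and by the paper's own check of condition (4), where the root imitates $v_0$ (out-multiset $k_0$ copies of label $1$) together with one extra edge to $w_{N-1}$ of label $N-1$. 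With that reading one gets $l(t_m(o^{-1}(R)))=\{(1,k_0),(N-1,1)\}=M_0=M_{l(R)}$, so condition (4) holds with no relation between $k_0$ and $k_1$, and your planned table of out-multisets closes the proof; you should simply record the root's out-multiset as $k_0$ edges to $v_1$ plus one edge to $w_{N-1}$ instead of leaving that reconciliation conditional.
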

\begin{proof}
    Take $G=\mathbf{FC}_{(k_i)^{N-1}_{i=0}}$, for some $N\in\mathds{Z}_{\geq 1}$, $(k_i)^{N-1}_{i=0}\in(\mathds{Z}_{\geq 1})^N$. We will label $l(v_i):=i$ and $l(w_i):=i$ for each $0\leq i<N$ and $l(R)=0$. The predecessors of each $v_i$ are either $\{v_{i-1\bmod N}, w_{i-1\bmod N}\}$, if $i\neq 1$ or $\{v_0,w_0,R\}$ if $i=1$, and  the predecessors of each $w_i$ are either $\{w_{i+1\bmod N}\}$, if $i\neq N-1$ or $\{w_0,R\}$, if $i=N-1$. So, setting the previous label as $l_p(v_i):=i-1\bmod N$ and $l_p(w_i)=i+1\bmod N$, for each $i$, gives us the third property from \defref{ch2-def1}. The outgoing neighbourhood of each $v_i$, consists of just $v_{i+1\bmod N}$ with multiplicity $k_i$, so we will set $M_i:=\{(i+1\bmod N, k_i),(i-1\bmod N, 1)\}$. Since the outgoing neighbourhood of each $w_i$ consists of $w_{i-1\bmod N}$ with multiplicity $1$ and $v_{i+1\bmod N}$ with multiplicity $k_i-1$, the label multisets satisfy the second condition from \defref{ch2-def1}. The outgoing neighbourhood of the root consists of $w_{N-1}$ with multiplicity $1$ and $v_1$ with multiplicity $k_i$, thus the fourth property of \defref{ch2-def1}, also follows. The first property of the definition follows by inspection of the $M_i$'s.
\end{proof}

To see that the non-redundant graphs that have an unfolding tree isomorphic to a unfolding tree of a focal unfolding cycle also have an actual labelling, we will simply show that it must also be a focal unfolding cycle, or a circular graph. To that end, we define periodic sequences of numbers as follows.
\begin{defn}
    For any $N\geq 1$, a sequence of numbers $(k_i)^N_{i=0}\in (\mathds{Z}_{\geq 1})^N$ is\\ ($m$-)periodic if there is some $1\leq m<N$ s.t.
        \[\forall 1\leq i\leq N,\ k_{i}=k_{i+m\bmod N}.\]
\end{defn}
Note that, due to Bézout's identity\cite{bachet1874problemes}, we can assume that  $m$ is a divisor of $N$.
We will also consider sequences that have a lot of their components $=1$, which we define as follows.
\begin{defn}
    For any $N\geq 0$, a sequence $(k_i)^{N-1}_{i=0}\in (\mathds{Z}_{\geq 1})^N$ is \textbf{full of ones}, if:
    \begin{itemize}
        \item for $N$ even, $k_i=1$, whenever $i\neq 0,\frac{N}{2}$, and
        \item for $N$ odd , $k_i=1$, whenever $i\neq 0$.
        
    \end{itemize}
\end{defn}
If the above two types of sequences are excluded, we can show that the focal unfolding cycle graphs based on these sequences are non-redundant.  
\begin{lemma}\label{ch2-lem8}
    For any $N\geq 1$ and any non-periodic sequence $(k_i)^N_{i=0}$ in $\mathds{Z}_{\geq 1}$, the graph $\mathbf{FC}_{(k_i)^{N-1}_{i=0}}$ is:
    \begin{itemize}
        \item reducible to a circular graph, if  $(k_i)^N_{i=0}$ is full of ones,
        \item non-redundant, otherwise.
    \end{itemize}
\end{lemma}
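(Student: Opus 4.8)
\textbf{Proof proposal for \lemref{ch2-lem8}.}

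The plan is to analyze the structure of the focal unfolding cycle $G=\mathbf{FC}_{(k_i)^{N-1}_{i=0}}$ directly, splitting into the two cases of the statement. First I would handle the ``full of ones'' case: here all the $k_i$ except possibly $k_0$ (and $k_{N/2}$ when $N$ is even) equal $1$. When $k_i=1$, the edge set $\{(w_i,v_{i+1},j)\mid 1\le j\le k_i-1\}$ is empty, so each such $w_i$ has a unique outgoing edge, namely $(w_i,w_{i-1},1)$. This means the $w$-vertices with $k_i=1$ form long directed chains that eventually feed into a $w$-vertex with $k_i>1$ (or wrap around). I would identify each such $w_i$ with the corresponding $v$-vertex along a path-preserving, outgoing-neighbourhood-bijective graph morphism $\phi$ onto a circular graph, and then invoke \lemref{lem-1} to conclude $\mathbf{FC}_{(k_i)}$ reduces (via a n.e.c.\ relation) to that circular graph. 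The map to check is that collapsing the redundant $w_i$'s onto the $v_i$'s respects origins, termini, and restricts to a bijection on each outgoing neighbourhood — this is the routine but slightly fiddly part, and the key point making it work is precisely that the $w_i$ with $k_i=1$ carry no ``branching'' information.

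For the second case — $(k_i)$ not full of ones and not periodic — I would prove non-redundancy by showing that no two distinct vertices $x,y$ of $G\setminus\{R\}$ have $\mathcal{T}(G,R)_x\cong\mathcal{T}(G,R)_y$, equivalently that no two distinct vertices have isomorphic outgoing ``local structure trees''. The natural invariant to track is the bi-infinite-to-the-future sequence of out-degrees read off by following the (essentially unique) forward paths from a vertex. From $v_i$ the forward degree sequence is $(k_i,k_{i+1},k_{i+2},\dots)$ read cyclically, so $\mathcal{T}(G,R)_{v_i}\cong\mathcal{T}(G,R)_{v_{i'}}$ would force $(k_i,k_{i+1},\dots)=(k_{i'},k_{i'+1},\dots)$ cyclically, i.e.\ $(k_i)$ periodic with period $i'-i$ — contradicting non-periodicity unless $i=i'$. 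From $w_i$ one forward step lands in $w_{i-1}$ (the weight-$k_i$ edge) together with $k_i-1$ copies landing in $v_{i+1}$; iterating, the $w$-chain $w_i,w_{i-1},\dots$ has its own forward-degree profile, and one must check it is distinguishable both from other $w_j$'s and from all $v_j$'s. Distinguishing $w_i$ from $w_j$ again reduces to non-periodicity; distinguishing $w_i$ from $v_j$ uses that the $w_i$-subtree has a ``spine'' whose branches (the $v_{i+1}$-subtrees hung off it) force a specific asymmetry that a $v_j$-subtree cannot replicate unless the sequence is full of ones. I would make this precise by comparing, at each depth, the multiset of isomorphism types of depth-one-smaller subtrees, and deriving a recurrence that collapses only in the excluded cases.

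The main obstacle I anticipate is the $w_i$-versus-$v_j$ comparison in the second case: the $w$-vertices have a genuinely different outgoing shape (a weight-one ``backbone'' edge plus $k_i-1$ forward edges), and one must verify that this shape is never accidentally isomorphic to some $v_j$-subtree. My expectation is that such an accidental isomorphism forces, inductively walking backwards along the $w$-chain, that all the relevant $k$'s along a stretch equal $1$, and then — combined with the cyclic structure — that $(k_i)$ is full of ones, which is excluded. A clean way to organize this is to first establish the ``forward degree sequence'' as a complete isomorphism invariant of $\mathcal{T}(G,R)_x$ for $x$ ranging over $V G\setminus\{R\}$ (using that, away from $R$, the graph is essentially a union of two cycles glued appropriately so forward behaviour is eventually deterministic), and then reduce both sub-claims to elementary statements about cyclic integer sequences. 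Once non-redundancy is reduced to ``the sequence is neither periodic nor full of ones'', the remaining arguments are combinatorial bookkeeping rather than tree surgery.
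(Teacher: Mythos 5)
Your overall architecture is the same as the paper's (collapse the $w$-vertices onto the $v$-cycle via a non-edge-collapsing quotient in the full-of-ones case; an exhaustive ``no accidental coincidences'' argument otherwise), but in both halves the step you defer is exactly where the content lies. In the first bullet, the identification cannot be ``$w_i$ onto $v_i$'' as your wording suggests: the $w$-chain runs \emph{backwards} ($w_i\to w_{i-1}$) while the $v$-cycle runs forwards, so any homomorphism to a circular graph that is bijective on out-neighbourhoods is forced to reverse indices, i.e.\ $w_i\mapsto v_{N-i\bmod N}$ (this is precisely the relation the paper writes down), and it only respects out-degrees because a full-of-ones sequence is symmetric under $i\mapsto N-i$ (the exceptional indices $0$ and $N/2$ are fixed by this symmetry). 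Your sketch never specifies this pairing, and with the naive pairing the morphism simply does not exist, so the ``routine but fiddly'' verification is not routine until the correct matching is named.

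In the second bullet your reformulation of non-redundancy as ``distinct vertices of $G\setminus\{R\}$ have non-isomorphic subtree unfoldings'' is legitimate, and your $v_i$-versus-$v_j$ argument via spherically homogeneous degree sequences is correct (it is the tree-side shadow of the paper's propagation of a hypothetical identification along unique out-neighbours). But the invariant you lead with fails for the $w$-vertices: $\mathcal{T}(G,R)_{w_i}$ branches into $v$-cycle subtrees at every $w_j$ along the chain with $k_j>1$, so it is not spherically homogeneous and has no single ``forward degree sequence''; in particular forward behaviour from a $w$-vertex is not eventually deterministic. The $w_i$-versus-$w_j$ and $w_i$-versus-$v_j$ comparisons, which you leave as anticipations, are the heart of the case. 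The actual mechanism (in the paper, phrased as propagation of the identification) is not that ``the $k$'s along a stretch are forced to be $1$'': an identification $w_j\sim v_k$ propagates to $w_{j-1}\sim v_{k+1}$ and so on, and then either produces a $v$-$v$ identification (hence periodicity) or forces the palindromic pairing $w_l\sim v_{N-l}$ for all $l$, after which any index with $k_l\geq 2$ must satisfy $2l\equiv 0\pmod N$, i.e.\ the sequence is full of ones; the $w_j\sim w_{j'}$ case propagates to periodicity directly. Finally, non-redundancy of $G$ (not just of $G\setminus\{R\}$) needs a word about the root: once the relation is trivial off $R$, one checks $R$ shares its out-neighbour multiset with no other vertex. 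So the proposal is on a workable track, but as written it has a genuine gap at the $w$-comparisons and an incorrect choice of identification in the reduction step.
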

\begin{proof}
    For convenience, we will write $G=\mathbf{FC}_{(k_i)^{N-1}_{i=0}}$.\\
   First, we will consider $(k_i)^{N-1}_{i=0}$ being full of ones. We will define an equivalence relation on $G$, by setting $w_i\sim v_{N-i\bmod N}$ for $0\leq i<N$ and pairing up their outgoing edges arbitrarily. If $k_i=k_{N-i}=1$ then $w_i$ has unique outgoing neighbour $w_{i-1}$ with one edge going towards it and $v_{N-i\bmod N}$ has unique outgoing neighbour $v_{N-(i-1)\bmod N}$ with one edge going towards it, so at these vertices the non-edge collapsing condition holds. If that is not the case, then,since $(k_i)^{N-1}_{i=0}$ is full of ones, $i=0$ or $i=N/2$. If $i=0$, $w_0$ has outgoing degree $k_0$ with neighbours $w_{N-1}$ and $v_1$, on the other hand, $v_0$ has outgoing degree also $k_0$ with unique neighbour $v_1$, since $w_{N-1}\sim v_1$ the non-edge collapsing condition holds. If $i=N/2$ (assuming N is even), similarly the outgoing degree of $w_i$ is $k_i$ with neighbours $w_{i-1}$ and $v_{i+1}$, on the other hand, $v_i$ has outgoing degree also $k_i$ with unique neighbour $v_{i+1}$, note that since $2i=N$, $i-1=2i-(i+1)=N-(i+1)$, we have $w_{i-1}\sim v_{i+1}$, showing that the non-edge-collapsing condition still holds. Thus $\sim$ is a non-edge-collapsing equivalence relation. Note that the vertices of $G/{\sim}$ are $\{[R]_{\sim},[v_0]_{\sim},\dots,[v_{N-1}]_{\sim}\}$, with edges going exclusively from $[v_i]_{\sim}$ to $[v_{i+1\bmod N}]_{\sim}$ and from $[R]_{\sim}$ to $[v_{1\bmod N}]_{\sim}$. Therefore, $G/{\sim}$ is circular.

    Now, if $(k_i)^{N-1}_{i=0}$ is not full of ones we will take some non-edge-collapsing equivalence relation $\sim$ on $G\setminus\{R\}$ and show that it cannot be non-trivial. Since $G\setminus\{R\}$ does not change under cyclic permutations of the $k_i$'s we may assume that $k_0\neq 1$. We consider several cases when the relation is non-trivial and show that they all lead to a contradiction.

    \begin{enumerate}
        \item  If $\exists j<k,\ v_j\sim v_k$ we have as the only outgoing neighbour of $v_j$ and $v_k$ is $v_{j+1\bmod N}$ and $v_{k+1\bmod N}$ respectively, we must have $v_{j+1\bmod N}\sim v_{k+1\bmod N}$. So inductively we have for each $l\geq 0$, $v_{j+l\bmod N}\sim v_{k+l\bmod N}$. Thus, we must have for $m:=k-j$ and for any $1\leq i\leq N$, $v_{i}\sim v_{i+m\bmod N}$. Since $\sim$ preserves the outgoing degree, we must have $k_{i}= k_{i+m\bmod N}$ and thus the sequence $(k_i)^N_{i=0}$ is periodic, contradicting our assumption.
        \item If $\exists 0\leq j,k\leq N-1,\ w_j\sim v_k$, we note that since $w_{j-1\bmod N}$ is an outgoing neighbour of $w_j$, and $v_{k+1\bmod N}$ is the only outgoing neighbour of $v_k$, we must have $w_{j-1\bmod N}\sim v_{k+1\bmod N}$. So inductively we have for each $m>0$, $w_{j-m\bmod N}\sim v_{k+m\bmod N}$. Thus, for each $w_{j'}$, there is some $v_{k'}$ s.t. $w_{j'}\sim v_{k'}$. Since we can pick $j$ to be any index, we will assume that $j=0$ and note that $k_0\neq 1$ (by assumption).
        We will now distinguish the cases where $k\neq 0$ and where $k=0$.
            \begin{enumerate}
                \item If $k\neq 0$, we must have $v_1\sim v_{k+1\bmod N}$, since $v_1$ is a neighbour of $w_0$ (since by assumption $k_0\geq 2$) and $v_{k+1\bmod N}$ is the unique neighbour of $v_k$. Since $1\neq k+1\bmod N$ we get a contradiction the same way as in the first point.
                \item If $k=0$, we will first assume that for any $l\neq m$, $v_l\not\sim v_m$, since this case was already covered in the first point. Since $w_{N-1}$ is a neighbour of $w_0$, and $v_1$ is the unique neighbour of $v_0$ we must have $v_1\sim w_{N-1}$. Repeating this argument inductively, we see that $w_i\sim v_{N-i}$ for any $0\leq i<N$. Now if $k_i\geq 2$ then $v_{i+1}$ is a neighbour of $w_i$ and thus we must have $v_{i+1}\sim v_{N-i+1}$, so by assumption we must have $i+1=N-i+1\mod N$. This gives us $2i=0\mod N$ and thus $i=0$ or $i=N/2$. So the sequence $(k_i)^{N-1}_{i=0}$ must be full of ones, contradicting our assumption. 
            \end{enumerate}
        \item If $\exists j<k<N,\ w_j\sim w_k$, we can exclude the second point, i.e. $\forall l,m,\ w_l\not\sim v_k$. Since $w_{j-1}$ is an outgoing neighbour of $w_j$ and the outgoing neighbours of $w_k$ are $w_{k-1}$ and (potentially) $v_{k+1}$, we must have $w_{j-1}\sim w_{k-1}$. By inductively repeating this argument, we have for $m=k-j$ and for any $0\leq i<N$, $w_{i}\sim w_{i+m\bmod N}$. Since $\sim$ preserves the outgoing degree, this also means: $k_{i}=k_{i+m\bmod N}$, making the sequence $(k_i)^{N-1}_{i=0}$ periodic and thus contradicting our assumption.
    \end{enumerate}
     So we see that any non-edge collapsing equivalence relation $\sim$ on $G$ must be trivial on $G\setminus\{R\}$. Additionally if we have $R\sim u$ for some vertex $u$, $R$ and $u$ must share their outgoing neighbourhood (as the relation is trivial on it). However this is only the case if $u=R$. So $\sim$ must be trivial showing that $G$ is non-redundant. 
   
    \end{proof}

We will now show that we can reduce every focal cycle to a focal cycle of a non-periodic sequence.

\begin{lemma}\label{ch2-lem9}
    For any $N-1$ and any sequence $(k_i)^{N-1}_{i=0}$ there is $M\leq N$, s.t. $(k_i)^{M-1}_{i=0}$ is non-periodic and there exists a non-edge-collapsing equivalence $\sim$ s.t.
        \[\mathbf{FC}_{(k_i)^{N-1}_{i=0}}/{\sim}\cong \mathbf{FC}_{(k_i)^{M-1}_{i=0}}.\]
\end{lemma}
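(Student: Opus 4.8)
The plan is to \emph{fold} the length-$N$ focal unfolding cycle onto the focal unfolding cycle associated with the minimal period of $(k_i)_{i=0}^{N-1}$. If the sequence is already non-periodic there is nothing to do: take $M=N$ and let $\sim$ be equality (which is trivially non-edge-collapsing). So assume $(k_i)_{i=0}^{N-1}$ is periodic, and let $M$ be its minimal period, i.e.\ the least $m$ with $1\le m<N$ and $k_i=k_{i+m\bmod N}$ for all $i$; by the remark following the definition of periodicity (Bézout's identity), $M$ divides $N$. I would first record two facts. Firstly, $(k_i)_{i=0}^{M-1}$ is non-periodic: a cyclic period $m'<M$ of the truncated sequence would, using $M\mid N$ together with the $M$-periodicity of the full sequence, also be a period of $(k_i)_{i=0}^{N-1}$, contradicting the minimality of $M$. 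Secondly, since $M\mid N$ and the full sequence is $M$-periodic, $k_i=k_{i\bmod M}$ for every $i$.

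Next I would exhibit an explicit surjective graph morphism
\[
\phi:\mathbf{FC}_{(k_i)_{i=0}^{N-1}}\to\mathbf{FC}_{(k_i)_{i=0}^{M-1}},\qquad
\phi(v_i)=v_{i\bmod M},\quad \phi(w_i)=w_{i\bmod M},\quad \phi(R)=R,
\]
acting on edges by $(v_i,v_{i+1\bmod N},j)\mapsto(v_{i\bmod M},v_{(i+1)\bmod M},j)$, $(w_i,v_{i+1\bmod N},j)\mapsto(w_{i\bmod M},v_{(i+1)\bmod M},j)$, $(w_i,w_{i-1\bmod N},k_i)\mapsto(w_{i\bmod M},w_{(i-1)\bmod M},k_{i\bmod M})$, $(R,v_1,i)\mapsto(R,v_{1\bmod M},i)$, and $(R,w_{N-1},1)\mapsto(R,w_{M-1},1)$. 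The identity $k_i=k_{i\bmod M}$ guarantees that each image genuinely lies in the edge set of $\mathbf{FC}_{(k_i)_{i=0}^{M-1}}$ and that the multiplicity ranges $1\le j\le k_i$ match up, while $M\mid N$ guarantees that reducing an index mod $N$ and then mod $M$ is the same as reducing it mod $M$ — which is exactly what makes $o$ and $t$ commute with $\phi$. Running through the five edge types, with particular attention to the distinguished vertices $v_1$, $w_{N-1}$ and $R$, confirms that $\phi$ is a graph morphism; and inspecting the outgoing neighbourhood of each $v_i$, of each $w_i$, and of $R$ shows that $\phi$ restricts to a bijection $o^{-1}(x)\to o^{-1}(\phi(x))$ at every vertex $x$. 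Surjectivity is clear.

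With this $\phi$ in hand, \Lemref{lem-1} applies directly and yields a non-edge-collapsing equivalence $\sim$ on $\mathbf{FC}_{(k_i)_{i=0}^{N-1}}$ together with an isomorphism $\mathbf{FC}_{(k_i)_{i=0}^{N-1}}/{\sim}\cong\mathbf{FC}_{(k_i)_{i=0}^{M-1}}$, which is exactly the assertion of the lemma.

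There is no deep content here; the work — and the place I would be most careful — is the index bookkeeping: checking that $\phi$ really does commute with $o$ and $t$ at $v_1$, $w_{N-1}$ and $R$, handling the degenerate case $M=1$ (where in the target $v_1=v_0$ and the unique $R$-to-$w$ edge lands on $w_0$), and extracting the statement "$M\mid N$" cleanly from Bézout. All of this is routine but easy to get wrong.
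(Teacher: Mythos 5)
Your proposal is correct and matches the paper's proof essentially verbatim: both take $M$ to be the minimal period (so that $M\mid N$, $k_i=k_{i\bmod M}$, and the truncated sequence is non-periodic), define the folding map $v_i\mapsto v_{i\bmod M}$, $w_i\mapsto w_{i\bmod M}$, $R\mapsto R$ on vertices and the corresponding map on edge triples, verify it is a surjective homomorphism restricting to a bijection on each outgoing neighbourhood, and then invoke \lemref{lem-1} to obtain the non-edge-collapsing equivalence. Your extra remarks (the trivial case $M=N$, the non-periodicity check for the truncation, and the index bookkeeping at $v_1$, $w_{N-1}$, $R$) are just a more explicit spelling-out of what the paper leaves implicit.
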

\begin{proof}
    Take $M>0$ to be the minimal number s.t. $k_{i}=k_{i+M\bmod N}$ for each $0\leq i<N$. Note that $M$ divides $N$ and, by minimality, the sequence $(k_i)^{M-1}_{i=0}$ is non-periodic. To show that there is a non-edge-collapsing equivalence relation, we will construct a surjective homomorphism
        \[\pi:\mathbf{FC}_{(k_i)^{N-1}_{i=0}}\to \mathbf{FC}_{(k_i)^{M-1}_{i=0}},\]
    that is bijective at the outgoing neighbourhood of each vertex. Then by \lemref{lem-1} we have the required equivalence relation. We will set for each $0\leq i <N$
        \[\pi(v_i):=v_{i\bmod M},\quad \pi(w_i):=w_{i\bmod M}\text{  and  } \pi(R):=R,\]
    and for any $v,w\in V\mathbf{FC}_{(k_i)^{N-1}_{i=0}}$, $l\in \mathds{Z}_{\geq 1}$
        \[\pi((v,w,l))=(\pi(v),\pi(w),l),\]
    whenever $(v,w,l)\in E\mathbf{FC}_{(k_i)^{N-1}_{i=0}}$. This is well-defined since $k_i=k_{i\bmod M}$ for each $i$. Clearly, $\pi$ is  a graph homomorphism. Since $M\leq N$, $\pi$ is surjective. For any vertex $v\in V\mathbf{FC}_{(k_i)^{N-1}_{i=0}}$, we can see that $\pi|_{o^{-1}(v)}$ is a a bijection into $o^{-1}(\pi(v))$, by checking the cases $v=v_i$, $v=w_i$ and $v=R$ for $i\leq N$. Thus $\pi$ is a bijection on each outgoing neighbourhood, so \lemref{lem-1} gives us the required non-edge collapsing equivalence relation.
\end{proof}

The two above results allow us to deal with graphs whose unfolding trees are isomorphic to the unfolding tree of a focal cycle.
\begin{lemma}\label{ch2-lem10}
    Any non-redundant rooted graph $(H,S)$, with a number sequence $(k_i)^{N-1}_{i=0}$ s.t.
        \[\mathcal{T}(H,S)\cong \mathcal{T}(\mathbf{FC}_{(k_i)^{N-1}_{i=0}},R)\]
    admits an actual labelling.
\end{lemma}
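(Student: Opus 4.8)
The plan is to use the two structural results \lemref{ch2-lem8} and \lemref{ch2-lem9} to identify $(H,S)$ with a graph whose actual labelling we already have. First I would normalise the sequence: by \lemref{ch2-lem9} there is $M\le N$ with $(k_i)^{M-1}_{i=0}$ non-periodic and a n.e.c.\ $\sim$ with $\mathbf{FC}_{(k_i)^{N-1}_{i=0}}/{\sim}\cong\mathbf{FC}_{(k_i)^{M-1}_{i=0}}$; by \lemref{lem-2} quotienting over a n.e.c.\ leaves the unfolding tree unchanged, so $\mathcal{T}(H,S)\cong\mathcal{T}(\mathbf{FC}_{(k_i)^{M-1}_{i=0}},R)$. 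Relabelling $M$ as $N$, I may assume from the outset that $(k_i)^{N-1}_{i=0}$ is non-periodic; note that truncating the sequence does not change $k_0$, so $k_0>1$ still holds and the focal cycle's root still has out-degree $>2$.

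Next I would apply \lemref{lem-2} to $\mathcal{T}(H,S)\cong\mathcal{T}(\mathbf{FC}_{(k_i)^{N-1}_{i=0}},R)$: it gives root-preserving n.e.c.'s $\sim_H,\sim_{FC}$ with $H/{\sim_H}\cong\mathbf{FC}_{(k_i)^{N-1}_{i=0}}/{\sim_{FC}}$, and since $H$ is non-redundant, $\sim_H$ is trivial, so $H$ is root-preservingly isomorphic to a n.e.c.\ quotient of $\mathbf{FC}_{(k_i)^{N-1}_{i=0}}$. Now invoke \lemref{ch2-lem8}. If $(k_i)^{N-1}_{i=0}$ is not full of ones, then $\mathbf{FC}_{(k_i)^{N-1}_{i=0}}$ is non-redundant, hence $\sim_{FC}$ is trivial too and $H\cong\mathbf{FC}_{(k_i)^{N-1}_{i=0}}$; \lemref{ch2-lem7} then gives $H$ an actual labelling. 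If $(k_i)^{N-1}_{i=0}$ is full of ones, \lemref{ch2-lem8} produces a n.e.c.\ with $\mathbf{FC}_{(k_i)^{N-1}_{i=0}}/{\sim'}\cong C$ for a circular graph $C$, and by \lemref{lem-2} again $\mathcal{T}(H,S)\cong\mathcal{T}(C,[R])$; by the same reasoning (non-redundancy of $H$ together with \lemref{lem-2}), $H$ is a n.e.c.\ quotient of $C$. Since a n.e.c.\ quotient of a circular graph is again circular with the same root out-degree (immediate from \lemref{lem-1}), $H$ is circular with root out-degree $>2$, and its unfolding tree $\mathcal{T}(H,S)\cong\mathcal{T}(\mathbf{FC}_{(k_i)^{N-1}_{i=0}},R)$ is cocompact because $\mathbf{FC}_{(k_i)^{N-1}_{i=0}}$ has an actual labelling by \lemref{ch2-lem7}. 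As $H$ is non-redundant (with no edge into $S$), $H\setminus\{S\}$ is non-redundant as well, so \lemref{ch2-lem5.6} applies to $H$ and hands it an actual labelling.

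I expect the main obstacle to be the verification, in the full-of-ones branch, that the hypotheses of \lemref{ch2-lem5.6} are genuinely met: that the quotient is again circular --- which follows because every non-root vertex of a circular graph sends all its outgoing edges to a single successor, a property that survives identifying vertices, and because the origin-and-terminus-respecting quotient map creates no new edge into the root --- that the root out-degree is preserved (by the outgoing-neighbourhood bijection in \lemref{lem-1}, together with $k_0>1$), and that the complement of the root stays non-redundant. Besides this, the only thing used that needs spelling out is that a non-redundant graph admits only the trivial n.e.c.\ relation, which is what converts the common-quotient statement of \lemref{lem-2} into the concrete isomorphisms used above; everything else is a direct appeal to the lemmas already proved.
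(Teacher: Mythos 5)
Your proposal is correct and takes essentially the same route as the paper: normalise the sequence with \lemref{ch2-lem9}, use \lemref{lem-2} together with non-redundancy of $H$ to realise $H$ as a non-edge-collapsing quotient of the focal cycle, and then split via \lemref{ch2-lem8} into the non-redundant case (handled by \lemref{ch2-lem7}) and the full-of-ones/circular case (handled by \lemref{ch2-lem5.6}, with cocompactness coming from the focal cycle's actual labelling). If anything, you check more of the hypotheses of \lemref{ch2-lem5.6} (circularity of the quotient, preservation of the root's out-degree, non-redundancy of $H\setminus\{S\}$) than the paper's own two-line argument does.
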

\begin{proof}
    By \lemref{ch2-lem9} we may assume that $(k_i)^{N-1}_{i=0}$ is non-periodic. As the unfolding trees are isomorphic and $H$ non-redundant, we must have a non-edge-collapsing equivalency $\sim$ s.t.
        \[H\cong \mathbf{FC}_{(k_i)^{N-1}_{i=0}}/{\sim}.\]
     By \lemref{ch2-lem8} we then see that $H$ is either a focal cycle, or a circular graph. In the former case \lemref{ch2-lem7}, gives us an actual labelling and in the latter \lemref{ch2-lem5.6} does (note that the unfolding tree is cocompact since the unfolding tree of a focal cycle is).
\end{proof}
Combining \lemref{ch2-lem6} and \lemref{ch2-lem10} we can get rid of the strong connectivity condition from \thmref{ch2-thm1}. \Lemref{ch2-lem0.6} also allows us to remove the non-redundancy condition. The theorem now looks like this.

\begin{customthm}{1.2}
    For any rooted graph $(G,R)$, without edges s.t.
    \begin{itemize}
        \item $t^{-1}(R)=\emptyset$
        \item $|o^{-1}(R)|>2$
    \end{itemize}
    then if $\mathcal{T}(G,R)$ is cocompact, there is an actual labelling on $G$.
\end{customthm}


To expand the result to rooted graphs with root of degree $2$, we can use a broader result

\begin{lemma}
    For any  rooted graph $(G,R)$, s.t.  $t^{-1}(R)=\emptyset$, 
    then for any $r\in\mathcal{T}(G,R)$, $G$ has an actual labelling if and only if $G^r$ has. 
\end{lemma}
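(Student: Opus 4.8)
The plan is to prove both implications by explicit constructions on the graph $G^{r}$ of \defref{ch2-def-1}, using that $(\cdot)^{p}$ only ``reverses a path''. We may assume $r\neq\varepsilon_{R}$ (otherwise $G^{r}\cong G$ and there is nothing to prove). Write $r=e_{1}e_{2}\cdots e_{n}$, let $q_{0}=\varepsilon_{R}\preceq q_{1}\preceq\cdots\preceq q_{n}=r$ be its prefixes (the \emph{spine} of $G^{r}$), and put $u_{i}:=T(q_{i})$, so $u_{0}=R$ and, since $t^{-1}(R)=\emptyset$, also $u_{i}\neq R$ for $i\geq 1$. One useful reduction: by \lemref{ch2-lem0} the tree $\mathcal{T}(G,R)^{r}$ is built from $\mathcal{T}(G,R)$ by $n$ successive single-edge root-changes, each of which keeps $t^{-1}(\mathrm{root})=\emptyset$ true; so it would suffice to prove the lemma for $|r|=1$ and iterate, and in that case $u_{0}=R\neq u_{1}$ automatically, removing the only delicate bookkeeping point below. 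I describe the argument for general $r$.

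\textbf{Forward implication.} Suppose $(l,l_{p})$ is an actual labelling of $G$ with multisets $(M_{x})_{x\in X}$. I would label $G^{r}$ over the \emph{same} set $X$ with the \emph{same} multisets: leave $l,l_{p}$ unchanged on the original vertices, and on the spine set $l(q_{i}):=l(u_{i})$ for $0\leq i\leq n$ and $l_{p}(q_{i}):=l(u_{i+1})$ for $0\leq i<n$ (the root $q_{n}=r$ needs no $l_{p}$). Then I would verify the four conditions of \defref{ch2-def1}. Condition~3 works because a spine vertex $q_{i}$ has $q_{i+1}$ as its unique predecessor, so $l_{p}(q_{i})=l(q_{i+1})=l(u_{i+1})$ is consistent; whereas an original vertex $v$ gains new predecessors in $G^{r}$ only along exit edges $d_{q}$, whose origin $q$ satisfies $T(q)=u_{i}$ with $d\colon u_{i}\to v$ an edge of $G$, so $l(q)=l(u_{i})=l_{p}(v)$ already holds by condition~3 for $G$. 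Condition~1 on the spine holds since $l(u_{i+1})$ lies in $l(t_{m}(o^{-1}(u_{i})))=M_{l(u_{i})}\setminus\{l_{p}(u_{i})\}\subseteq M_{l(u_{i})}$. Conditions~2 and~4 on the spine reduce to the telescoping identity $\{l(u_{i-1})\}\cup\bigl(M_{l(u_{i})}\setminus\{l(u_{i-1})\}\bigr)=M_{l(u_{i})}$, valid because $l(u_{i-1})=l_{p}(u_{i})\in M_{l(u_{i})}$ by condition~1 for $G$, combined with conditions~2 and~4 for $G$ at $u_{i}$. This construction uses nothing about finiteness of $G$.

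\textbf{Converse.} Let $(\lambda,\lambda_{p})$ be an actual labelling of $G^{r}$, and let $\bar r=(q_{n},q_{n-1})(q_{n-1},q_{n-2})\cdots(q_{1},q_{0})\in\mathcal{W}(G^{r},r)$ be the spine run back down. Under $\mathcal{T}(G^{r},r)\cong\mathcal{T}(G,R)^{r}$ of \lemref{ch2-lem0}, $\bar r$ corresponds to the vertex $\varepsilon_{R}$ of $\mathcal{T}(G,R)$, so by \lemref{ch2-lem0} again and \lemref{ch2-lem-1}
\[
  \mathcal{T}\bigl((G^{r})^{\bar r},\bar r\bigr)\ \cong\ \mathcal{T}(G^{r},r)^{\bar r}\ \cong\ \bigl(\mathcal{T}(G,R)^{r}\bigr)^{\varepsilon_{R}}\ =\ \mathcal{T}(G,R).
\]
Applying the forward construction to $(G^{r},r)$ and $\bar r$ gives an actual labelling of $(G^{r})^{\bar r}$; it remains to transport it to $G$. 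After deleting from $(G^{r})^{\bar r}$ the vertices unreachable from $\bar r$ one obtains a graph admitting a surjective graph morphism onto $G$, bijective on every outgoing neighbourhood, sending the new spine vertex at $G$-location $u_{i}$ to $u_{i}$ and each surviving original vertex to its image in $G$ (this is the statement that the cleaned-up $(G^{r})^{\bar r}$ is $\simeq$-equivalent to $(G,R)$, realised by one cover map collapsing the spine onto the path $r$). Equivalently one writes the actual labelling of $G$ down directly: $l(v):=\lambda(v)$ if $v$ is a vertex of $G$ not on $r$, $l(R):=\lambda(q_{0})$, and $l(u_{i}):=\lambda(q_{i})$ for $u_{i}$ on $r$, with $l_{p}$ read off the same way.

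\textbf{Main obstacle.} The step I expect to be hardest is this last transport. It is \emph{not} an instance of \lemref{ch2-lem0.6}, which pushes an actual labelling \emph{down} a cover (from a quotient to the total space), whereas here the actually labelled graph $(G^{r})^{\bar r}$ is the total space and $G$ is the quotient — and admitting an actual labelling is genuinely not a $\simeq$-invariant (the canonical graph of any cocompact tree admits one, yet a graph with sinks and the same unfolding tree may not). So the descent must be checked by hand for \emph{this particular} cover, exploiting that it collapses the spine onto $r$ in a label-coherent way; the points needing care are that $l(u_{i}):=\lambda(q_{i})$ is well defined when $r$ revisits a vertex (i.e.\ $\lambda(q_{i})=\lambda(q_{j})$ whenever $u_{i}=u_{j}$) and that $R$ and the $u_{i}$ may survive in $(G^{r})^{\bar r}$ only as spine vertices, so their labels truly have to be read off the spine. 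The clean way around the revisiting issue is the reduction to $|r|=1$ noted above, where the spine has two vertices and $u_{0}=R\neq u_{1}$; the general case then follows by iteration.
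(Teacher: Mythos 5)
Your forward implication is correct and is essentially the paper's own construction (extend $l$ along the spine by $l(q_i):=l(u_i)$, $l_p(q_i):=l(u_{i+1})$, and check \defref{ch2-def1}; the paper states this more tersely). The problem is the converse, where the step you yourself flag as the ``main obstacle'' is in fact the entire content of the statement and is not carried out. Descending the actual labelling of $(G^{r})^{\bar r}$ along the covering onto $G$ requires the labelling to be constant on fibres, i.e.\ the identities $\lambda(q_i)=\lambda(u_i)$ whenever the original copy of $u_i$ survives, and $\lambda(q_i)=\lambda(q_j)$ whenever $u_i=u_j$; you name these as things ``needing care'' but give no argument for them. They are not automatic: when $u_i$ has out-degree one and $i<n$, the spine vertex $q_i$ and the original $u_i$ share no outgoing neighbour in $G^{r}$, so condition~3 alone does not force $\lambda(q_i)=\lambda(u_i)$, and one must compare the multiset conditions (this is exactly what the paper does, one edge at a time: with no sinks, the root $r$ of $G^{r}$ and $T(r)$ share an outgoing neighbour, so condition~3 gives $l(r)=l(T(r))$, and then comparing condition~4 at $r$ with condition~2 at $T(r)$ yields $l(r_-)=l_p(T(r))$; note the no-sinks hypothesis, implicit in the surrounding discussion, is indispensable here and is also tacitly used by your argument).

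Your fallback reduction to $|r|=1$ is likewise unjustified as written: you argue it at the level of unfolding trees, but, as you observe yourself, admitting an actual labelling is not an invariant of the unfolding tree, so what the iteration actually needs is a rooted-graph isomorphism $(G^{r_-})^{s}\cong G^{r}$ (after removing unreachable vertices), where $s$ is the exit edge of $G^{r_-}$ corresponding to the last edge of $r$. This isomorphism is true but requires a verification (the old root $r_-$ becomes unreachable and is replaced by the new prefix vertex, and the cleanup conventions must be matched), and even granting it, the single-edge case still needs the descent check described above. The paper's converse avoids the detour through $(G^{r})^{\bar r}$ entirely: it inducts downward on $|r|$, restricting the labelling of $G^{r}$ to $G^{r_-}$ and verifying \defref{ch2-def1} only at the new root $r_-$. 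So the forward half of your proposal is fine, but the converse half has a genuine gap — the fibre-constancy of the labelling under the quotient, and the unproved graph-level identification behind the $|r|=1$ reduction — although the outline could be completed along the paper's inductive lines.
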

\begin{proof}
    Firstly, if $G$ has an actual labelling $(l,l_p)$, we can expand it to $G^r$ by setting $l(q)=l(T(q))$ for any $q\preceq r$ and $l_p(q)$ to be the label of its unique predecessor in $G^r$ for $q\neq r$. The fact that this labelling is actual follows from the definition of $G^r$(\defref{ch2-def-1}).

    We will prove the converse implication by induction over the length of $r$. First, note that since $G^{\varepsilon_R}\cong G$ the proposition follows length $0$. Now if we write $r=r_{-}e$ for some $e\in EG$ and the longest proper prefix $r_{-}\precneq r$ we will show that if $G^{r}$ has an actual labelling then so does $G^{r_{-}}$. First note that in $G^{r}$, $r$ has an outgoing neighbour in $VG\setminus\{R\}$ since $G$ has no sinks. Now note that $T(r)\in VG$ shares this neighbour with $r$ and therefore $l(r)=l(T(r))$. Since $r$ is the root of $G^r$ we must have
        \[l(t_m(o^{-1}(r)))=M_{l(r)}=M_{l(T(r))}=l(t_m(o^{-1}(T(r))))\cup\{l_p(T(r))\}.\]
    And since:
    \[t_m(o^{-1}(r))=t_m(o^{-1}(T(r)))\cup\{r_{-}\}\]
    we must have $l(r_{-})=l_p(T(r))=l(T(r_{-}))$. Now since $VG^r=VG^{r_{-}}\cup \{r\}$ we can restrict the labelling of $G^{r}$ onto $G^{r_{-}}$. Note that the properties of an actual labelling, away from the root $r^{-}$, follow directly since they hold in $G^{r}$. Now to see that they hold at $r_{-}$ we note first that a vertex $v$ in $G^{r_{-}}$ that has $r_{-}$ as a predecessor either has $T(r_{-})$ as predecessor as well and thus $l_p(v)=l(T(r_{-}))=l(r_{-})$, or $v$ is the longest proper prefix of $r_{-}$, in which case $r_{-}$ is its only predecessor. Additionally, we note that in $G^{r_{-}}$ $r_{-}$ has all the same outgoing edges as in $G^{r}$ except for the addition of an edge connecting it to $T(r)$ and since $l(T(r))=l(r)=l_p(r_{-})$ and the labelling being actual on $G^{r}$ we have
        \[l(t_m(o^{-1}(r_{-})))=M_{l(r_{-})}\setminus\{l_p(r_{-})\}\cup l(T(r))=M_{l(r_{-})}.\]
    This shows that $l$ is also an actual labelling on $G^{r_{-}}$. 
    Now inductively we may conclude that if $G^{r}$ has an actual labelling, so does $G^{\varepsilon_{R}}\cong G$.

\end{proof}
    If we now look at a rooted graph $(G,R)$ without sinks, $t^{-1}(R)=\emptyset$ and $|o^{-1}(R)|=2$, then we either have for each $v\in VG\setminus\{R\}$ $|o^{-1}(v)|=1$  or we have a path $r$ s.t. $G^r$ has a root of outgoing degree $>2$ and therefore has an actual labelling. In the first case, setting every vertex to be the same label gives us an actual labelling on the graph. For the second case, we can use the above lemma to show that $G$ also has actual labelling. This replaces the condition of $|o^{-1}(R)|>2$ in \thmref{ch2-thm1} with $|o^{-1}(R)|\geq 2$.
    Additionally, if $|o^{-1}(R)|=1$ and $G$ has no sinks then $\mathcal{T}(G,R)$ cannot be cocompact since then $\varepsilon_R$ is the only vertex with incoming + outgoing degree = $1$ and so we cannot have $\mathcal{T}(G,R)^{p}\cong \mathcal{T}(G,R)^{q}$ if $q$ and $p$ have different distances from $\varepsilon_{R}$, i.e. different lengths. And since $G$ has no sinks there are paths of unlimited length, making cocompactness impossible. This allows us to remove the $|o^{-1}(R)|>2$ condition from \thmref{ch2-thm1}, evolving it into the following.

    \begin{customthm}{1.3}
    For any rooted graph $(G,R)$, without sinks s.t.
    \begin{itemize}
        \item $t^{-1}(R)=\emptyset$
    \end{itemize}
    then if $\mathcal{T}(G,R)$ is cocompact, there is an actual labelling on $G$.
\end{customthm}

    Lastly, to remove the condition of $t^{-1}(R)=\emptyset$ we can show that it is implied by $\mathcal{T}(G,R)$ being cocompact.
    \begin{lemma}
        Let $(G,R)$ be a rooted graph without sinks. If $\mathcal{T}(G,R)$ is cocompact then $t^{-1}(R)=\emptyset$.
    \end{lemma}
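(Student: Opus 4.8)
The plan is to argue by contradiction: suppose $t^{-1}(R)\neq\emptyset$ and $\mathcal{T}(G,R)$ is cocompact; I will produce a vertex $v\neq\varepsilon_{R}$ with $\mathcal{T}(G,R)^{v}\cong\mathcal{T}(G,R)$ and reach a contradiction by a degree count, the point being that $T(v)=R$ forces $v$ to have one more neighbour than $\varepsilon_{R}$. First I would build a ``return loop'' at the root. Fix $e_{0}\in t^{-1}(R)$; since $R$ is a root there is a directed path $\pi$ in $G$ from $R$ to $o(e_{0})$, so $c:=\pi e_{0}$ is a closed walk based at $R$ with $|c|\geq 1$. Then each power $c^{j}$ $(j\geq 0)$ is a walk from $R$ to $R$, hence a vertex of $\mathcal{T}(G,R)$ with $T(c^{j})=R$, and $c^{j}=\varepsilon_{R}$ only when $j=0$.

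Next I would show $\mathcal{T}(G,R)\cong\mathcal{T}(G,R)^{c^{m}}$ for some $m\geq 1$. By cocompactness there are finitely many isomorphism types among $\{\mathcal{T}(G,R)^{c^{j}}\mid j\geq 0\}$, so $\mathcal{T}(G,R)^{c^{a}}\cong\mathcal{T}(G,R)^{c^{b}}$ for some $0\leq a<b$; put $m:=b-a$. If $a=0$ this already gives $\mathcal{T}(G,R)\cong\mathcal{T}(G,R)^{c^{m}}$, so assume $a\geq 1$. By \lemref{ch2-lem1} the isomorphism may be chosen to send $c^{a}r\mapsto c^{b}r$ for every walk $r$ from $R$; reading it (via \lemref{ch2-lem-1}) as an isomorphism of the trees re-rooted at $c^{a+1}$ and $c^{b+1}=c^{a+1+m}$ gives $\mathcal{T}(G,R)^{c^{a+1}}\cong\mathcal{T}(G,R)^{c^{a+1+m}}$, and iterating this forward step yields $\mathcal{T}(G,R)^{c^{j}}\cong\mathcal{T}(G,R)^{c^{j+m}}$ for all $j\geq a$. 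Taking $A$ to be the least multiple of $m$ with $A\geq a$, I then have $\mathcal{T}(G,R)^{c^{A}}\cong\mathcal{T}(G,R)^{c^{A+m}}$ and can apply \lemref{ch2-lem2} repeatedly with $r=c^{m}$ and $p=c^{A-m},c^{A-2m},\dots,c^{0}$ (the required hypothesis $\mathcal{T}(G,R)^{pr}\cong\mathcal{T}(G,R)^{pr^{2}}$ at each stage is the coincidence obtained at the previous one, and $O(p)=T(p)=O(r)=T(r)=R$ throughout), descending in $A/m$ steps to $\mathcal{T}(G,R)^{\varepsilon_{R}}\cong\mathcal{T}(G,R)^{c^{m}}$; this is exactly the descent performed in the proof of \lemref{ch2-lem3}.

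The contradiction is then a degree count in the underlying undirected trees. An isomorphism $\mathcal{T}(G,R)\to\mathcal{T}(G,R)^{c^{m}}$ of rooted trees is, in particular, an isomorphism of undirected graphs carrying the root $\varepsilon_{R}$ to the root $c^{m}$, hence preserves degrees. But $\varepsilon_{R}$ has exactly $|o^{-1}(R)|$ children and no parent in $\mathcal{T}(G,R)$, so undirected degree $|o^{-1}(R)|$, whereas $c^{m}$ is a non-root vertex with $T(c^{m})=R$, so it has $|o^{-1}(R)|$ children and one parent, i.e.\ undirected degree $1+|o^{-1}(R)|$. Since $|o^{-1}(R)|\neq 1+|o^{-1}(R)|$, this is impossible, so $t^{-1}(R)=\emptyset$.

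I expect the main obstacle to be the middle paragraph, specifically arranging that the iteration of \lemref{ch2-lem2} terminates precisely at $\varepsilon_{R}$ and not at some $c^{j}$ with $0<j<m$ (from which the lemma can no longer be invoked); this is handled by first using \lemref{ch2-lem1} and \lemref{ch2-lem-1} to slide the coincidence $\mathcal{T}(G,R)^{c^{a}}\cong\mathcal{T}(G,R)^{c^{b}}$ forward to an index $A$ divisible by the period $m$ before starting the descent. Everything else — the construction of $c$ and the closing degree count — is immediate; the hypotheses $|o^{-1}(R)|>1$ and ``no sinks'' are not actually needed for this particular argument (the inequality $d\neq d+1$ holds for every $d$), they are merely the standing hypotheses under which the preceding results were established.
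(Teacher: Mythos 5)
Your proof is correct, but it takes a genuinely different route from the paper's. The paper argues indirectly: it adjoins a fresh root $\bar R$ duplicating the out-edges of $R$ to form $\bar G$ with $\mathcal{T}(\bar G,\bar R)\cong\mathcal{T}(G,R)$ and $t^{-1}(\bar R)=\emptyset$, applies the already-established theorem to obtain an actual labelling of $\bar G$, notes that $R$ and $\bar R$ share an outgoing neighbour and hence the same label by condition 3 of \defref{ch2-def1}, and derives the contradiction $|M_{l(R)}|=|M_{l(R)}|+1$ by comparing condition 2 at the non-root vertex $R$ with condition 4 at the root $\bar R$; this is exactly why the paper's statement carries the hypotheses of no sinks and $|o^{-1}(R)|>1$, which are needed for the machinery it invokes. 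You instead work directly from the early re-rooting lemmas: pigeonhole on the powers of a return loop $c$ at $R$ to get $\mathcal{T}(G,R)^{c^a}\cong\mathcal{T}(G,R)^{c^b}$, slide the coincidence forward with \lemref{ch2-lem1} and \lemref{ch2-lem-1} to an exponent divisible by $m=b-a$, descend with \lemref{ch2-lem2} to $\mathcal{T}(G,R)\cong\mathcal{T}(G,R)^{c^m}$, and conclude with the root-degree count $|o^{-1}(R)|\neq|o^{-1}(R)|+1$ (the isomorphism carries root to root, the root being the unique vertex of in-degree $0$ in each tree). This is sound; indeed your forward-sliding step handles the ``apply \lemref{ch2-lem2} $n$ times'' descent more carefully than the paper's own proof of \lemref{ch2-lem3} does, and the final application with $p=\varepsilon_R$ is harmless since \lemref{ch2-lem1} is only ever invoked on non-empty paths. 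What each approach buys: the paper's is a short corollary of its main theorem, while yours is self-contained and more elementary, and it correctly shows (for the finite graphs considered here) that the hypotheses of no sinks and $|o^{-1}(R)|>1$ are not actually needed for this particular statement.
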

    \begin{proof}
        We will assume that $\mathcal{T}(G,R)$ is cocompact and $t^{-1}(R)\neq\emptyset$.\\
        Define $\bar{G}$ to be an expansion of $G$ defined by $V\bar{G}=VG\cup\{\bar{R}\}$, $E\bar{G}=EG\cup\{\bar{e}\mid e\in o^{-1}(R)\}$ where $\bar{R}$ and $\bar{e}$ are elements not in $VG$ and $EG$ resp. . The functions $o$ and $t$ will remain the same on $EG$ and will be defined on each $\bar{e}$ as follows
            \[\forall e\in o^{-1}(R),\ o(\bar{e})=\bar{R} \text{ and } t(\bar{e})=t(e).\]
        Clearly, $\mathcal{T}(G,R)\cong \mathcal{T}(\bar{G},\bar{R})$ since when we identify $\bar{R}$ with $R$ and $\bar{e}$ with $e$, $\bar{G}$ becomes $G$. Additionally, since we can reach (with a path) every neighbour of $R$ from $\bar{R}$, we must be able to reach every vertex $\neq R$ from $\bar{R}$. This includes every predecessor of $R$ (which exist by assumption). So we can also reach $R$ from $\bar{R}$. This makes $\bar{R}$ a root of $\bar{G}$. Now since $\mathcal{T}(\bar{G},\bar{R})$ is cocompact and $t^{-1}(\bar{R})=\emptyset$, we have an actual labelling $(l,l_p)$ on $\bar{G}$. Now since $R$ and $\bar{R}$ share a neighbour ($o^{-1}(R)\neq \emptyset$ since there are no sinks) we must have $l(R)=l(\bar{R})$. However, by construction we have $|o^{-1}(R)|=|o^{-1}(\bar{R})|$ and thus    
            \[|M_{l(R)}|=|o^{-1}(R)|+1=|o^{-1}(\bar{R})|+1=|M_{l(\bar{R})}|+1=|M_{l(R)}|+1\]
        giving us the desired contradiction.
    \end{proof}
    Putting all of these results together we get the final form of \thmref{ch2-thm1}.
    \begin{theorem}
        For any rooted graph without sinks, $\mathcal{T}(G,R)$ is cocompact if and only if $G$ admits an actual labelling
    \end{theorem}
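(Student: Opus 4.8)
The plan is to obtain this equivalence simply by assembling the results already proved above; the final statement is essentially a bookkeeping wrap-up, so no new combinatorial argument is required.

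For the \emph{if} direction I would argue as follows. Suppose the sinkless rooted graph $G$ admits an actual labelling. The canonical projection $\pi\colon\mathcal{T}(G,R)\to G$, sending a path $p$ to its terminus $T(p)$ and an edge $(p,pe)$ to $e$, is a surjective graph morphism, and for every vertex $p$ of $\mathcal{T}(G,R)$ it restricts to a bijection $o^{-1}(p)\to o^{-1}(T(p))$ (the outgoing edges at $p$ are exactly the $(p,pe)$ with $e\in o^{-1}(T(p))$). Hence \lemref{lem-1} provides a non-edge-collapsing equivalence $\sim$ on $\mathcal{T}(G,R)$ with $\mathcal{T}(G,R)/{\sim}\cong G$. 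Since $G$ carries an actual labelling, \lemref{ch2-lem0.6} pulls it back to an actual labelling $(l,l_p)$ of $\mathcal{T}(G,R)$, valued in some finite set $X$. By \lemref{ch2-lem14}, $l(p)=l(q)$ implies $\mathcal{T}(G,R)^p\cong\mathcal{T}(G,R)^q$, which is precisely the definition of a cocompact tree with label set $X$. So $\mathcal{T}(G,R)$ is cocompact.

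For the \emph{only if} direction, suppose $\mathcal{T}(G,R)$ is cocompact. Since $G$ is sinkless, $|o^{-1}(R)|\geq 1$. I would first rule out $|o^{-1}(R)|=1$: in that case $\varepsilon_R$ is the unique vertex of $\mathcal{T}(G,R)$ of total degree $1$, so any isomorphism $\mathcal{T}(G,R)^p\cong\mathcal{T}(G,R)^q$ must preserve the distance to $\varepsilon_R$ and hence force $|p|=|q|$; as $G$ is sinkless there are paths of every length, so infinitely many isomorphism types occur and $\mathcal{T}(G,R)$ is not cocompact, a contradiction. Therefore $|o^{-1}(R)|\geq 2$, and then the preceding lemma (cocompact $\mathcal{T}(G,R)$ with $|o^{-1}(R)|>1$ forces $t^{-1}(R)=\emptyset$) applies, giving $t^{-1}(R)=\emptyset$. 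Now Theorem~1.3 above applies verbatim — $G$ is sinkless, $t^{-1}(R)=\emptyset$, and $\mathcal{T}(G,R)$ is cocompact — producing an actual labelling on $G$, as required.

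I do not expect any serious obstacle here: every substantive ingredient (Theorem~1.3, the strong-connectivity reduction via \cite{Reid2020}, the circular and focal-unfolding-cycle analyses, the $G^r$ construction removing the root-degree-two restriction, and the removal of the $t^{-1}(R)\neq\emptyset$ hypothesis) is already in place. The only points that need a little care are: verifying that the canonical projection is a bijection on each outgoing neighbourhood so that \lemref{lem-1} genuinely applies; noting that the label set of an actual labelling is finite, which is what upgrades the conclusion of \lemref{ch2-lem14} into actual cocompactness rather than a mere implication between rooted-isomorphism types; and the small degenerate-case analysis of the root degree (degree $0$ is impossible for a sinkless graph, degree $1$ contradicts cocompactness, degree $\geq 2$ feeds Theorem~1.3).
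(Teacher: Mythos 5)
Your proposal is correct and follows essentially the same route the paper intends: the forward direction by transferring the actual labelling from $G$ to $\mathcal{T}(G,R)$ via the terminus projection, \lemref{lem-1} and \lemref{ch2-lem0.6}, then invoking \lemref{ch2-lem14} with the finite label set, and the converse by excluding root out-degree $0$ and $1$, applying the lemma forcing $t^{-1}(R)=\emptyset$, and then Theorem~1.3. The only difference is that you spell out the bookkeeping the paper leaves implicit in ``putting all of these results together,'' which is fine.
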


    To see that the ``no sinks" conditions cannot be removed note that the following graph
        \begin{center}
            \begin{tikzpicture}
                \node[big red node]     (S) at (0,2)     {};
                \node[big blue node]    (x) at (-2,0)    {};
                \node[big green node]   (y) at (2,0)     {};
                \node[big yellow node]  (z) at (0,-2)    {};
                \path[-latex]
                        (x)     edge[bend left]                         node[above]             {2} (y)
                                edge                                    node[right]             {}  (z)
                        (y)     edge[bend left]                         node[below]             {3} (x)
                                edge                                    node[right]             {}  (z)
                        (S)     edge                                    node[right]             {3}  (y)

                          ;    
            \end{tikzpicture}
        \end{center}
    has a cocompact unfolding tree but does not admit an actual labelling. 

    \section{Trees almost isomorphic to cocompact trees}

    If we want to show when a tree is almost isomorphic to a label-regular tree, we note that it must be an unfolding tree of a graph. As is shown in \cite{gorazd2023classification}, the unfolding tree is almost isomorphic to some $\mathcal{T}(\llangle(G_1,\rho_1),\dots,(G_n,\rho_n)\rrangle,R)$, for disjoint $G_i$'s that are robust,non-redundant and connected. From the results of this paper, it follows that cocompact trees are of the form $\mathcal{T}(\llangle(H,\gamma)\rrangle,R)$, with $\llangle(H,\gamma)\rrangle$ non-redundant, admitting an actual labelling and $H$ robust (strongly connected except in the focal cycle case). So using the results from \cite{gorazd2023classification}, we can see that if the tree $\mathcal{T}(\llangle(G_1,\rho_1),\dots,(G_n,\rho_n)\rrangle,R)$ is almost isomorphic to a cocompact tree, we must have $n=1$.
    Now if a graph $\llangle G,\rho \rrangle$, has unfolding tree almost isomorphic to a cocompact unfolding trees,  we must have some function $\gamma$ s.t. $\llangle G,\gamma \rrangle$ admits an actual labelling. So $G$ must admit a pre-actual labelling. To check if $G$ admits a pre-actual labelling, we can do this using the following algorithm that refines the trivial labelling of the graph until it satisfies the second condition of \defref{ch2-def1}, while checking the third condition at every step.
    \begin{algorithm}[H]
        \caption{Constructing a pre-actual labelling of a graph}
        \begin{algorithmic}\label{ch2-alg1}
            \FOR{$v\in VG$}
                \STATE{$l_1(v)\gets 1$}
            \ENDFOR
            \STATE{$i\gets 1$}
            \REPEAT
                \STATE{$i\gets i+1$}
                \FOR{$v\in VG$}
                    \IF{$\exists l_p(v),\ \forall e\in t^{-1}(v),\ l_{i-1}(o(e))=l_p(v)$}
                        \STATE{$l_i(v)\gets l_{i-1}(t_m(o^{-1}(v)))\cup\{l_p(v)\}$}
                    \ELSE
                        \STATE{$G$ has no pre-actual labeling}
                    \ENDIF
                \ENDFOR
            \UNTIL{$\forall v,w\in VG,\ \big(l_{i-1}(v)=l_{i-1}(w)\big)\iff \big(l_i(v)=l_i(w))\big)$}
        \end{algorithmic}
    \end{algorithm}

     This algorithm gives us the coarsest pre-actual labelling $l$ on $G$ (if it exists). If we want to check whether a rooted graph $(G,R)$ has an actual labelling we first check with the above algorithm whether $G\setminus \{R\}$ has a pre-actual labelling. Then this pre-actual labelling can be expanded to the root making it an actual labelling iff we have a label $l_R$ s.t.  
     \begin{itemize}
         \item $\forall v \in t(o^{-1}(R)), l_p(v)=l_R$
         \item $M_{l_R}=t_m(o^{-1}(R))$.
     \end{itemize}
     Since we have constructed the coarsest pre-actual labelling, an actual labelling exists only if the constructed pre-actual labeling can be expanded. This answers the open question (1) in \cite{COURCELLE2025} in the finite sinkless case, I suspect it can be extended to the case with infinite edge valencies similarly. Graphs with sinks can be dealt with by removing all sinks and checking whether the resulting graph has an actual labeling.
     
     We see from \cite{gorazd2023classification} that any spider product of robust graphsm that is almost isomorphic to $\llangle G,\rho\rrangle$ is of the form $\llangle G,\gamma\rrangle$. So if we can find all the $\gamma$'s s.t. $\llangle G,\gamma\rrangle$ is actually labelled, we can check whether each one of them is almost isomorphic to $\llangle G,\rho\rrangle$, as in \cite{gorazd2023classification}, by checking wheather $\sum_{i\in\text{dom}{\gamma}}\gamma(i)=\sum_{i\in\text{dom}{\gamma}}\gamma(i)$ in the graph monoid.
     
     To find all these $\gamma$'s we note that the coarsest actual labelling on $\llangle G,\gamma\rrangle$ restricts to the coarsest pre-actual labelling on $G$. So if we find the coarsest pre-actual labelling $(l,l_p)$ using the above algorithm, we must find $\gamma$ s.t. $l$ extends to an actual labelling on $\llangle G,\gamma\rrangle$. 
     
     To see when a pre-actual labelling can be extended to an actual labelling, we note that in a non-redundant, actually labelled rooted graph $(H,S)$ then for any $v\in VH$ that shares a neighbour with $S$ we must also have $t(o^{-1}(S))\subseteq t(o^{-1}(v))$ (by non-redundancy) and $l_m(t_m(o^{-1}(S)))=M_{l(v)}$. Conversely, if $H\setminus\{S\}$ has a pre-actual labelling and there is a vertex $v$ with  $t(o^{-1}(S))\subseteq t(o^{-1}(v))$ and $l_m(t_m(o^{-1}(S)))=M_{l(v)}$ we can extend the labelling into an actual one by setting $l(S)=l(v)$.
     
     To check whether the pre-actual labelling on $G$ can be extended to an actual one on $\llangle G,\gamma\rrangle$ we just have to check whether $\gamma([n])\subseteq t(o^{-1}(v))$ and $l_m(\gamma_m(\text{dom}(\gamma))))=M_{l(v)}$, for some $v\in VG$. Note that since $G$ is non-redundant such a $\gamma$ is completely determined by $v$ (up to permutation which doesn't change the structure of the spider product), we will thus write $\gamma:=\gamma_v$ and conversely if there is some neighbour $w$ of $v$ with $l(w)=l_p(v)$, $v$ determines a $\gamma_v$. So if we want to check whether $\llangle G,\rho\rrangle$ has unfolding tree almost isomorphic to a cocompact tree, we just have to check whether it is almost isomorphic to the unfolding tree of some $\llangle G,\gamma_v\rrangle$. This is the case if
        \[\sum_{j\in [m]}\rho(j)=\sum_{i\in [n]}\gamma_v(i)=\sum_{e\in o^{-1}(v)} t(e)+v_p\]
    in the graph monoid, where $v_p\in t(o^{-1}(v))$ is the (unique) neighbour of $v$ s.t. $l(v_p)=l_p(v)$.
    
    Summing up, we can check whether $\mathcal{T}(H,S)$ is almost isomorphic to a cocompact tree, for any rooted graph $(H,S)$, as follows:
    \begin{itemize}
        \item Use \cite{gorazd2023classification}*{Lemma 10} to find a spider product of robust graphs
        \[\llangle(G'_1,\rho'_1,k'_1),\dots(G'_m,\rho'_m,k'_m)\rrangle\]
        with unfolding tree almost isomorphic to $\mathcal{T}(H,S)$.
        \item Factor over the coarsest non-edge-collapsing equivalence relation (that is trivial on the root) to get a non-redundant spider product of disjoint connected graphs $\llangle(G_1,\rho_1,k_1),\dots(G_n,\rho_n,k_n)\rrangle$, that has  unfolding tree almost isomorphic to $\mathcal{T}(H,S)$. If $n\neq 1$ then $\mathcal{T}(H,S)$ is not almost isomorphic to a cocompact tree. If $n=1$ write the spider product as $\llangle G,\rho\rrangle$.
        \item Use \algref{ch2-alg1} to determine whether $G$ has a pre-actual labelling. If not, $\mathcal{T}(H,S)$ is not almost isomorphic to a cocompact tree. If it admits a pre-actual labelling denote it by $(l_p,l)$.
        \item For any vertex $v\in VG$ that has an outgoing neighbour $v_p$ s.t. $l(v_p)=l_p(v)$, check whether
        \[\sum_{j\in [m]}\rho(j)=\sum_{e\in o^{-1}(v)} t(e)+v_p.\]
        If that is not the case for any $v$, $\mathcal{T}(H,S)$ is not almost isomorphic to a cocompact tree. If that is the case for some $v$, $\mathcal{T}(H,S)$ is almost isomorphic to $\mathcal{T}(\llangle G,\gamma_v\rrangle)$ which is a cocompact tree. 
    \end{itemize}
\section{Acknowledgements}
This paper came to be because of discussions with Alejandra Garrido, Waltraud Lederle and Colin Reid at the "Actions of totally disconnected locally compact groups on discrete structures" focus programme at the University of Münster. This paper has been adapted from a chapter of the authors PhD thesis, supervised by George Willis, Colin Reid and Stephan Tornier, reviewed by Collin Bleak and Roozbeh Hazrat, and supported by the ARC grant no. FL170100032. 
\bibliographystyle{plain}
 \bibliography{bibliography}
 
\end{document}